\newtheorem{thm}[equation]{Theorem}
\newtheorem{prop}[equation]{Proposition}
\newtheorem{cor}[equation]{Corollary}
\newtheorem{lem}[equation]{Lemma}
\theoremstyle{definition}
\newtheorem{defn}[equation]{Definition}
\newtheorem{rem}[equation]{Remark}
\newtheorem{exmp}[equation]{Example}
\numberwithin{equation}{section}
\newcommand{\arr}{\rightarrow}
\newcommand{\xarr}{\xrightarrow}
\newcommand{\cat}[1]{\operatorname{\mathsf{#1}}}
\newcommand{\opn}{\operatorname}
\newcommand{\inj}{\hookrightarrow}
\newcommand{\rmitem}[1]{\item[\text{\textup{(#1)}}]}
\newcommand{\mcal}[1]{\mathcal{#1}}
\newcommand{\mrm}[1]{\mathrm{#1}}
\newcommand{\bsym}[1]{\boldsymbol{#1}}
\newcommand{\Hom}{\operatorname{Hom}}
\newcommand{\Ext}{\operatorname{Ext}}
\newcommand{\Proj}{\operatorname{\mathsf{Proj}}}
\newcommand{\Inj}{\operatorname{\mathsf{Inj}}}
\newcommand{\Pj}{\operatorname{\mathsf{Proj}}}
\newcommand{\pj}{\operatorname{\mathsf{proj}}}
\newcommand{\Ij}{\operatorname{\mathsf{Inj}}}
\newcommand{\Mod}{\operatorname{\mathsf{Mod}}}
\newcommand{\fmod}{\operatorname{\mathsf{mod}}}
\newcommand{\Add}{\operatorname{\mathsf{Add}}}
\newcommand{\ten }{\otimes}
\def\mapdown#1{\Big\downarrow \rlap{$\vcenter{\hbox{$\scriptstyle#1$}}$}}
\newcommand{\LCM}{\operatorname{\mathsf{LCM}}}
\newcommand{\CM}{\operatorname{\mathsf{CM}}}
\newcommand{\Morph}{\operatorname{\mathsf{Mor}}}
\title[Recollement of homotopy categories and CM modules]
{Recollement of homotopy categories and Cohen-Macaulay modules}
\author{Osamu Iyama, Kiriko Kato and Jun-ichi Miyachi}
\address{O. Iyama: Graduate School of Mathematics, Nagoya University Chikusa-ku, Nagoya, 464-8602
Japan}
\email{iyama@math.nagoya-u.ac.jp}
\address{K. Kato:  Graduate School of Science, Osaka Prefecture University,
1-1 Gakuen-cho, Nakaku, Sakai, Osaka 599-8531, JAPAN}
\email{kiriko@mi.s.osakafu-u.ac.jp}
\address{J. Miyachi: Department of Mathematics, Tokyo Gakugei
University, Koganei-shi, Tokyo, 184-8501, Japan}
\email{miyachi@u-gakugei.ac.jp}
\subjclass{18E30, 18G35, 16G99}
\begin{document}

\begin{abstract}
We study the homotopy category of unbounded complexes with bounded homologies and 
its quotient category by the homotopy category of bounded complexes.
We show the existence of a recollement of the above quotient category
and it has the homotopy category of acyclic complexes as a triangulated subcategory.
In the case of the homotopy category of finitely generated projective modules over 
an Iwanaga-Gorenstein ring,
we show that 
the above quotient category are triangle equivalent to the stable module category of 
Cohen-Macaulay $\opn{T}_2(R)$-modules.
\end{abstract}

\maketitle

\tableofcontents

\setcounter{section}{-1}
\section{Introduction}\label{intro}
Let $\mcal{A}$ be an abelian category, and $\mcal{B}$ its an additive full subcategory.
We consider the homotopy category $\cat{K}^{\infty, \mrm{b}}(\mcal{B})$ of unbounded complexes
with bounded homologies, its subcategories
\[\begin{CD}
\cat{K}^{\mrm{b}}(\mcal{B}) @>>> \cat{K}^{-, \mrm{b}}(\mcal{B}) \\
@VVV @VVV \\
\cat{K}^{+, \mrm{b}}(\mcal{B}) @>>> \cat{K}^{\infty, \mrm{b}}(\mcal{B})
\end{CD}\]
and the homotopy category $\cat{K}^{\infty, \emptyset}(\mcal{B})$ of acyclic complexes, 
that is, complexes with null homologies.
The homotopy categories $\cat{K}^{+, \mrm{b}}(\mcal{B})$ and $\cat{K}^{-, \mrm{b}}(\mcal{B})$ are basically important  
when we  study bounded derived categories using the category of injective objects and the one of projective objects.
The quotient category $(\cat{K}^{-,\mrm{b}}/\cat{K}^{\mrm{b}})(\mcal{B})$ of $\cat{K}^{-,\mrm{b}}(\mcal{B})$ by 
$\cat{K}^{\mrm{b}}(\mcal{B})$
 is  closely related to various stable categories in representation theory (\cite{H2}, \cite{Rd2}, \cite{Bu}).
The derived categories are originally defined by quotient categories of homotopy categories.
But in the situation that we actually deal with a derived category, we often realize it
as a subcategory of a homotopy category which is an opposite part of 
the homotopy subcategory of null complexes.
This structure of subcategories in a triangulated category is called a stable t-structure, a torsion pair, a semiorthogonal decomposition, Bousfield localization and so on (see e.g. \cite{Mi1}).

\begin{defn} \label{st-tprime}
Let $\mcal{D}$ be a triangulated category.
A pair $(\mcal{U}, \mcal{V})$ of full subcategories of $\mcal{D}$ is called a {\it stable t-structure} in $\mcal{D}$ provided that
\begin{enumerate}
\rmitem{a}  $\mcal{U}=\Sigma\mcal{U}$ and $\mcal{V}=\Sigma\mcal{V}$.
\rmitem{b}  $\opn{Hom}_{\mcal{D}}(\mcal{U}, \mcal{V}) = 0$.
\rmitem{c}  For every $X  \in \mcal{D}$, there exists a triangle $U \arr X \arr V \arr 
\Sigma U$
with $U \in \mcal{U}$ and $V \in \mcal{V}$.
\end{enumerate}
\end{defn}

In this case, $\mcal{U}$ and $\mcal{V}$ are full triangulated subcategories
of which the isomorphic closures are thick subcategories in $\mcal{D}$.
In this paper, using this notion we study triangulated full subcategories of $(\cat{K}^{\infty, \mrm{b}}/\cat{K}^{\mrm{b}})(\mcal{B})$.
First we show that if $\mcal{A}$ has enough injective (resp., projective) objects, then
\[\begin{aligned}
(\cat{K}^{\infty, \emptyset}(\mcal{B}),(\cat{K}^{+,\mrm{b}}/ \cat{K}^{\mrm{b}})(\mcal{B} ) )\
\text{and}\
((\cat{K}^{+,\mrm{b}}/ \cat{K}^{\mrm{b}})(\mcal{B} ) , (\cat{K}^{-,\mrm{b}}/ \cat{K}^{\mrm{b}})(\mcal{B} ) ) \\
\text{(resp.,}\ ((\cat{K}^{+,\mrm{b}}/ \cat{K}^{\mrm{b}})(\mcal{B} ) , (\cat{K}^{-,\mrm{b}}/ \cat{K}^{\mrm{b}})(\mcal{B} ) )\
\text{and}\
(\cat{K}^{-, \mrm{b}}(\mcal{B}),(\cat{K}^{\infty, \emptyset}/ \cat{K}^{\mrm{b}})(\mcal{B} ) ))
\end{aligned}\]
\noindent
are stable t-structures in 
$(\cat{K}^{\infty, \mrm{b}}/ \cat{K}^{\mrm{b}})(\mcal{B} )$,
where $\mcal{B}$ is the category $\Inj\mcal{A}$ (resp., $\Proj\mcal{A}$) of injective (resp., projective) objects (Theorem \ref{st-st1}).
This condition corresponds to TTF theory in an abelian category.
Hence we have the recollement
\[\xymatrix{
(\cat{K}^{*,\mrm{b}}/\cat{K}^{\mrm{b}})(\mcal{B}) \ar@<-1ex>[r]^{i_{*}}
& (\cat{K}^{\infty,\mrm{b}}/\cat{K}^{\mrm{b}})(\mcal{B})
\ar@<-2ex>[l]_{i^{*}} \ar@<2ex>[l]^{i^{!}} \ar@<-1ex>[r]^{j^{*}} 
& (\cat{K}^{\infty,\mrm{b}}/\cat{K}^{*,\mrm{b}})(\mcal{B})
\ar@<-2ex>[l]_{j_{!}} \ar@<2ex>[l]^{j_{*}}
}\]
\noindent
where $*= +$ (resp., $-$).
In particular, we have the following triangle equivalences.

\begin{thm} \label{thm01}
Let $\mcal{A}$ be an abelian category with enough injectives, $\mcal{A}'$ an abelian category 
with enough projectives, then we have a triangle equivalences
\[\begin{aligned}
&(\cat{K}^{\infty, \mrm{b}}/\cat{K}^{+, \mrm{b}})(\Inj\mcal{A}) \simeq 
(\cat{K}^{-, \mrm{b}}/\cat{K}^{\mrm{b}})(\Inj\mcal{A}) \simeq 
\cat{K}^{\infty, \emptyset}(\Inj\mcal{A})&\\
&(\cat{K}^{\infty, \mrm{b}}/\cat{K}^{-, \mrm{b}})(\Proj \mcal{A}') \simeq 
(\cat{K}^{+, \mrm{b}}/\cat{K}^{\mrm{b}})(\Proj \mcal{A}') \simeq 
\cat{K}^{\infty, \emptyset}(\Proj \mcal{A}')&
\end{aligned}\]
\end{thm}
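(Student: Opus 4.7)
The plan is to derive Theorem \ref{thm01} as a formal consequence of Theorem \ref{st-st1}, using two standard properties of Verdier quotients. The key principle is that whenever $(\mcal{U},\mcal{V})$ is a stable t-structure in a triangulated category $\mcal{D}$, the composites $\mcal{U}\inj\mcal{D}\surj\mcal{D}/\mcal{V}$ and $\mcal{V}\inj\mcal{D}\surj\mcal{D}/\mcal{U}$ are triangle equivalences: full faithfulness is immediate from Definition \ref{st-tprime}(b), while essential surjectivity follows from the approximation triangle in (c), which realizes every object of $\mcal{D}$ as isomorphic in the Verdier quotient to one of its two ``halves''.

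With this in hand, I would set $\mcal{D}:=(\cat{K}^{\infty,\mrm{b}}/\cat{K}^{\mrm{b}})(\Inj\mcal{A})$ and apply the principle to each of the two stable t-structures in $\mcal{D}$ supplied by Theorem \ref{st-st1}. The first, $(\cat{K}^{\infty,\emptyset}(\Inj\mcal{A}),(\cat{K}^{+,\mrm{b}}/\cat{K}^{\mrm{b}})(\Inj\mcal{A}))$, yields $\cat{K}^{\infty,\emptyset}(\Inj\mcal{A})\simeq\mcal{D}/(\cat{K}^{+,\mrm{b}}/\cat{K}^{\mrm{b}})(\Inj\mcal{A})$, and the second, $((\cat{K}^{+,\mrm{b}}/\cat{K}^{\mrm{b}})(\Inj\mcal{A}),(\cat{K}^{-,\mrm{b}}/\cat{K}^{\mrm{b}})(\Inj\mcal{A}))$, yields $(\cat{K}^{-,\mrm{b}}/\cat{K}^{\mrm{b}})(\Inj\mcal{A})\simeq\mcal{D}/(\cat{K}^{+,\mrm{b}}/\cat{K}^{\mrm{b}})(\Inj\mcal{A})$. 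Since the two right-hand sides are literally the same Verdier quotient of $\mcal{D}$, this already pairs up the outer two of the three categories appearing in the theorem.

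To identify this common quotient with $(\cat{K}^{\infty,\mrm{b}}/\cat{K}^{+,\mrm{b}})(\Inj\mcal{A})$, I would invoke the standard ``Noether isomorphism'' for Verdier quotients: given thick subcategories $\cat{K}^{\mrm{b}}\subset\cat{K}^{+,\mrm{b}}\subset\cat{K}^{\infty,\mrm{b}}$ of $\cat{K}(\Inj\mcal{A})$, the canonical functor induces a triangle equivalence $(\cat{K}^{\infty,\mrm{b}}/\cat{K}^{\mrm{b}})/(\cat{K}^{+,\mrm{b}}/\cat{K}^{\mrm{b}})\simeq\cat{K}^{\infty,\mrm{b}}/\cat{K}^{+,\mrm{b}}$. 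Chaining the three equivalences delivers the first line of Theorem \ref{thm01}; the second line follows verbatim from the analogous pair of stable t-structures on the projective side recorded just before the theorem.

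Since all the substantive work---producing the approximation triangles, verifying the Hom-vanishings, and checking that the relevant isomorphic closures are thick---is already discharged inside Theorem \ref{st-st1}, the only real bookkeeping point is to confirm that $(\cat{K}^{+,\mrm{b}}/\cat{K}^{\mrm{b}})(\Inj\mcal{A})$ has the same isomorphism-closure in both stable t-structures, so that the two Verdier quotients appearing above genuinely coincide. This is automatic from the remark after Definition \ref{st-tprime} that the aisles of a stable t-structure always have thick isomorphic closures, so I do not anticipate any substantive obstacle beyond correctly packaging Theorem \ref{st-st1}.
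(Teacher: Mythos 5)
Your proposal is correct and follows essentially the same route as the paper: the paper's proof (Corollary \ref{st-h2}) likewise reads the equivalences off from the two stable t-structures of Theorem \ref{st-st1}, identifying both $\cat{K}^{\infty,\emptyset}(\Inj\mcal{A})$ and $(\cat{K}^{-,\mrm{b}}/\cat{K}^{\mrm{b}})(\Inj\mcal{A})$ with the common Verdier quotient $(\cat{K}^{\infty,\mrm{b}}/\cat{K}^{+,\mrm{b}})(\Inj\mcal{A})$ as the essential images $\opn{Im}j_{!}$ and $\opn{Im}j_{*}$ in the induced recollement. Your ``key principle'' is exactly Proposition \ref{t-st-first}(3) (so you may cite it rather than reprove full faithfulness, which needs slightly more than Definition \ref{st-tprime}(b) alone), and your explicit appeal to the Noether isomorphism for Verdier quotients only makes precise an identification the paper leaves implicit.
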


Let us introduce the following key notion in this paper.

\begin{defn}
Let $\mcal{D}$ be a triangulated category, and let $\mcal{U}, \mcal{V},  \mcal{W}$
be triangulated full subcategories of $\mcal{D}$.
We call $(\mcal{U}, \mcal{V}, \mcal{W})$ a \emph{triangle of recollements} in $\mcal{D}$
if $(\mcal{U}, \mcal{V})$, $(\mcal{V}, \mcal{W})$ and $(\mcal{W}, \mcal{U})$ are stable t-structures in $\mcal{D}$.

If $(\mcal{U}, \mcal{V}, \mcal{W})$ is a triangle of recollements, 
all the relevant subcategories and quotient categories 
$\mcal{U}, \mcal{V}, \mcal{W},\mcal{D}/\mcal{U}, \mcal{D}/\mcal{V}$ 
and $\mcal{D}/\mcal{W}$ are equivalent. 
\end{defn}
We observed a triangle of recollements in a quotient homotopy category of an Iwanaga-Gorenstein ring. 
Such strong symmetry was never reported before. 
In the forthcoming paper \cite{IKM}, we will study an $n$-gon of recollement in general.

The following class of rings is the main object of this paper.

\begin{defn}
We call a ring $R$ \emph{Iwanaga-Gorenstein} if it is Noetherian
\footnote{All our results on Iwanaga-Gorenstein rings except those in Section \ref{secLCM} are
valid also for coherent rings with $\opn{idim}_RR<\infty$ and $\opn{idim}R_R<\infty$}
 with $\opn{idim}_RR<\infty$ and $\opn{idim}R_R<\infty$ \cite{Iw}.
Moreover, we say that \emph{$R$ has a two-sided injective resolution} if
there is an $R$-bimodule complex $V$ which is
an injective resolution of $R$ as right $R$-modules and as left $R$-modules.
\end{defn}

We have the following stronger statement for $\mcal{A}=\fmod R$,
which is our first main result in this paper  (Theorems \ref{st-h3}, \ref{st-h31}).

\begin{thm}
Let $R$ be an Iwanaga-Gorenstein ring.
Then we have a triangle of recollements
\[\begin{aligned}
(\cat{K}^{+,\mrm{b}}/ \cat{K}^{\mrm{b}})(\pj {R} ), 
(\cat{K}^{-,\mrm{b}}/ \cat{K}^{\mrm{b}})(\pj {R} ),
\cat{K}^{\infty, \emptyset}(\pj {R}))
\end{aligned}\]
in $(\cat{K}^{\infty,\mrm{b}}/  \cat{K}^{\mrm{b}})(\pj {R} )$.
If $R$ has a two-sided injective resolution, then we have a triangle of recollements
\[\begin{aligned}
(\cat{K}^{+,\mrm{b}}/ \cat{K}^{\mrm{b}})(\Pj {R} ), 
(\cat{K}^{-,\mrm{b}}/ \cat{K}^{\mrm{b}})(\Pj {R} ),
\cat{K}^{\infty, \emptyset}(\Pj {R}))
\end{aligned}\]
in $(\cat{K}^{\infty,\mrm{b}}/  \cat{K}^{\mrm{b}})(\Pj {R} )$.
\end{thm}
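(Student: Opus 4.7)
Put $\mcal{D}:=(\cat{K}^{\infty,\mrm{b}}/\cat{K}^{\mrm{b}})(\pj R)$ and regard $\mcal{U},\mcal{V},\mcal{W}$ as subcategories of $\mcal{D}$. Theorem~\ref{st-st1} applied to $\mcal{A}=\fmod R$ already supplies the stable t-structure $(\mcal{U},\mcal{V})$ in $\mcal{D}$, and the canonical functor $\cat{K}^{\infty,\emptyset}(\pj R)\to\mcal{D}$ is fully faithful because every bounded acyclic complex of projectives is contractible. It therefore remains to establish the two stable t-structures $(\mcal{V},\mcal{W})$ and $(\mcal{W},\mcal{U})$.

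The decisive input from the Iwanaga-Gorenstein hypothesis is a complete-resolution lemma: setting $n:=\max(\opn{idim}_RR,\opn{idim}R_R)$, for any $X\in\cat{K}^{\infty,\mrm{b}}(\pj R)$ and any integer $N$ with $H^i(X)=0$ for $i\leq -N+n$, dimension shifting along the projective tail of $X$, together with the bounds $\opn{idim}_RR\leq n$ and $\opn{idim}R_R\leq n$, yields $\Ext_R^{>0}(Z^{-N}(X),R)=0$ and the analogous dual vanishing. Hence $Z^{-N}(X)\in\CM R$ admits a complete projective resolution, and a symmetric statement holds at the upper end of the complex.

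The decomposition triangles are now produced by splicing. Given $X\in\mcal{D}$, choose $N$ as above and define $W$ by taking the stupid truncation $W^{\leq -N}:=X^{\leq -N}$ (which is a projective resolution of $Z^{-N+1}(X)$ concentrated in degrees $\leq -N$) and extending it by a projective coresolution of the Cohen-Macaulay module $Z^{-N+1}(X)$; then $W\in\cat{K}^{\infty,\emptyset}(\pj R)$, and the natural lift $W\to X$ has cone contractible below degree $-N$ and hence lying in $\cat{K}^{+,\mrm{b}}(\pj R)$, giving the desired triangle for $(\mcal{W},\mcal{U})$. The triangle for $(\mcal{V},\mcal{W})$ is built dually at the upper end. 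The Hom-vanishings $\Hom_\mcal{D}(\mcal{W},\mcal{U})=0$ and $\Hom_\mcal{D}(\mcal{V},\mcal{W})=0$ follow from the same truncation principle: a chain map from an acyclic complex of projectives to a bounded-below (resp.\ from a bounded-above to an acyclic) complex of projectives can, beyond the homology range of the bounded side, be absorbed into a bounded complex of projectives thanks to the Cohen-Macaulay structure of the high cocycles, and therefore vanishes in the Verdier quotient $\mcal{D}$. The principal technical obstacle is verifying carefully that the splicing yields a well-defined morphism (not merely a roof) in $\mcal{D}$, that the cone really lands in $\mcal{U}$ after passage to the quotient by $\cat{K}^{\mrm{b}}(\pj R)$, and that the resulting triangles are natural enough to encode a genuine stable t-structure rather than object-by-object decompositions.

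For the second statement, with $\Pj R$ in place of $\pj R$, the proof carries over once the complete-resolution lemma is extended to arbitrary projective modules. This is precisely what the two-sided injective resolution provides: the bimodule complex $V$ yields a functorial bounded injective resolution $V\otimes_RP$ of every $P\in\Pj R$, from which the vanishing $\Ext_R^{>0}(Z^{-N}(X),R)=0$ and the resulting complete projective resolutions in $\cat{K}^{\infty,\emptyset}(\Pj R)$ persist. With this upgrade in place, the remaining structural arguments transport verbatim from the finitely generated case.
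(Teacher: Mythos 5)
Your treatment of the finitely generated case is essentially sound, but it takes a more computational route than the paper. The paper's Lemma \ref{st-h4} obtains the stable t-structure $(\cat{K}^{\infty,\emptyset}(\pj R),\cat{K}^{+,\mrm{b}}(\pj R))$ in one stroke by applying the duality $\Hom_R(-,R):\cat{K}^{\infty,\mrm{b}}(\pj R)\to\cat{K}^{\infty,\mrm{b}}(\pj R^{\mrm{op}})$ (which swaps $\cat{K}^{+,\mrm{b}}$ with $\cat{K}^{-,\mrm{b}}$ and preserves acyclicity precisely because $R$ is Iwanaga-Gorenstein) to the already-known t-structure $(\cat{K}^{-,\mrm{b}},\cat{K}^{\infty,\emptyset})$ over $R^{\mrm{op}}$, and then descends to the quotient via Proposition \ref{st-stK}. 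Your splicing argument unwinds what this duality does and is workable, provided you justify that a finitely generated $M$ with $\Ext^{>0}_R(M,R)=0$ really admits an exact projective coresolution (reflexivity of $M$ and $\Ext^{>0}_{R^{\mrm{op}}}(M^{*},R)=0$ are needed here, and these are again most cleanly obtained from the $R$-duality). Two smaller points: you only need to produce the third t-structure, since $(\cat{K}^{-,\mrm{b}}/\cat{K}^{\mrm{b}},\cat{K}^{\infty,\emptyset})$ is already supplied by Theorem \ref{st-st1}(2); and full faithfulness of $\cat{K}^{\infty,\emptyset}(\pj R)\to\mcal{D}$ does not follow merely from $\cat{K}^{\infty,\emptyset}(\pj R)\cap\cat{K}^{\mrm{b}}(\pj R)=0$ --- it is the content of Proposition \ref{st-stK}, which also disposes of your worry about roofs versus genuine morphisms in the Verdier quotient.

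The second statement is where there is a genuine gap. For $\Pj R$ the decisive difficulty is not the vanishing $\Ext^{>0}_R(Z^{-N}(X),P)=0$ (which does follow by dimension shifting, since $\opn{idim}P_R\le\opn{idim}R_R$ for every projective $P$ over a Noetherian ring), but the existence of an exact projective \emph{coresolution} of the non-finitely-generated module $Z^{-N}(X)$, i.e.\ the fact that it occurs as a cocycle of some object of $\cat{K}^{\infty,\emptyset}(\Pj R)$. For finitely generated modules this comes from dualizing into $R^{\mrm{op}}$; for arbitrary modules $\Hom_R(-,R)$ is useless, and the implication ``$\Ext^{>0}(M,\Pj R)=0$ implies $M$ has a complete projective resolution'' is exactly the acyclicity-versus-total-acyclicity problem. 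Your appeal to the functorial injective resolution $V\otimes_RP$ does not produce such a coresolution; it is the starting point of the Iyengar--Krause argument, not a substitute for it. This is precisely why the paper's Lemma \ref{st-h41} invokes the equivalence $-\otimes_RV:\cat{K}(\Pj R)\to\cat{K}(\Ij R)$ of \cite{IK}, together with their identification of $\cat{K}^{\infty,\emptyset}(\Pj R)$ with the category of totally acyclic complexes, so as to transport the (easy) injective-side t-structure of Proposition \ref{st-st0} to the projective side. Without this input, the claim that ``the remaining structural arguments transport verbatim'' is an assertion rather than a proof.
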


\begin{defn}
For an Iwanaga-Gorenstein ring $R$, define the category
of \emph{Cohen-Macaulay $R$-modules}\footnote{
In the representation theory of commutative rings \cite{Y} and orders \cite{CR,A2}, there is another notion of Cohen-Macaulay modules (see \cite{I}).
These two concept coincide for the Gorenstein case.} and \emph{large Cohen-Macaulay $R$-modules} by
\begin{eqnarray*}
\CM R&:=&\{X\in\fmod R\ |\ \Ext^i_R(X,R)=0\ (i>0)\},\\
\LCM R&:=&\{X\in\Mod R\ |\ \Ext^i_R(X, \Pj R)=0\ (i>0)\}.
\end{eqnarray*}
\end{defn}

In the representation theory of algebras, the category $\CM R$
is important from the viewpoint of (co)tilting theory \cite{ABr,ABu,AR1,AR2}.
It is well-known that $\CM R$ forms a Frobenius category with the subcategory $\pj {R}$ of projective-injective objects,
and the stable category $\underline{\CM} R$ forms a triangulated category \cite{H1}.
By a result of Happel \cite{H2}, there exists a triangle equivalence
\[\underline{\CM} R\simeq(\cat{K}^{-,\mrm{b}}/\cat{K}^{\mrm{b}})(\pj {R}),\]
which was shown by Rickard \cite{Rd2} for self-injective algebras and by Buchweitz \cite{Bu} for commutative Gorenstein rings.
Recently this category was studied by several authors motivated by string theory (e.g. \cite{O1}, \cite{O2}).
In this paper, 
we give a same type of description of this category.
We denote by $\opn{T}_{m}(R)$ an $m\times m$ upper triangular matrix ring over $R$.
Our second main result is the following (Theorems \ref{mth}, \ref{Bmcor}).

\begin{thm}\label{thm02}
Let $R$ be an Iwanaga-Gorenstein ring. Then so is $\opn{T}_2(R)$,
and there exists a triangle equivalence
\[\underline{\CM}\opn{T}_2(R) \simeq
(\cat{K}^{\infty,\mrm{b}}/\cat{K}^{\mrm{b}})(\pj {R}).\]
If $R$ has a two-sided injective resolution, then there exist triangle equivalences
\[\underline{\LCM}\opn{T}_2(R) \simeq
(\cat{K}^{\infty,\mrm{b}}/\cat{K}^{\mrm{b}})(\Pj {R})\simeq
(\cat{K}^{\infty,\mrm{b}}/\cat{K}^{\mrm{b}})(\Ij {R}).\]
\end{thm}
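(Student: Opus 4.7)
The plan is first to establish that $\opn{T}_2(R)$ is Iwanaga-Gorenstein and to describe $\CM \opn{T}_2(R)$ concretely, then to construct the triangle equivalence by combining the recollement from the preceding theorem with Happel's equivalence $\underline{\CM}R \simeq (\cat{K}^{-,\mrm{b}}/\cat{K}^{\mrm{b}})(\pj R)$. Identifying $\fmod \opn{T}_2(R)$ with the morphism category $\Morph(\fmod R)$, the indecomposable projectives correspond to $(0\to R)$ and $(R \xrightarrow{\opn{id}} R)$; taking the termwise injective resolutions and splicing them via the short exact sequence of projectives bounds both $\opn{idim}_{\opn{T}_2(R)}\opn{T}_2(R)$ and $\opn{idim}\opn{T}_2(R)_{\opn{T}_2(R)}$ in terms of the corresponding dimensions for $R$. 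A direct $\Ext$ computation against these two projective generators then identifies $\CM \opn{T}_2(R)$ with the \emph{monomorphism category} consisting of pairs $(f : M_1 \into M_2)$ such that both $M_1$ and $\opn{coker}(f)$ lie in $\CM R$.

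The heart of the argument is the construction of a functor
\[F : \underline{\CM}\opn{T}_2(R) \arr (\cat{K}^{\infty,\mrm{b}}/\cat{K}^{\mrm{b}})(\pj R).\]
Given $(f : M_1 \into M_2)$, choose projective resolutions $P^\bullet \to M_2$ and $Q^\bullet \to M_1$ in $\cat{K}^{-,\mrm{b}}(\pj R)$, lift $f$ to a chain map $\widetilde f : Q^\bullet \to P^\bullet$, and form the splice having $P^\bullet$ in non-positive degrees and $Q^\bullet[\text{shifted}]$ in positive degrees, with the degree-zero differential induced by $\widetilde f$. The resulting complex $F(f)$ has cohomology concentrated in finitely many degrees (recording $M_1$, $M_2$, and $\opn{coker}(f)$ up to shift), so it lies in $\cat{K}^{\infty,\mrm{b}}(\pj R)$; its image in the quotient by $\cat{K}^{\mrm{b}}(\pj R)$ is independent of the choices of resolutions and lift, and morphisms of monomorphisms (commutative squares) functorially induce morphisms in the quotient.

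To prove that $F$ is a triangle equivalence, I exploit the recollement just established. By Theorem~\ref{thm01}, both $(\cat{K}^{+,\mrm{b}}/\cat{K}^{\mrm{b}})(\pj R)$ and $(\cat{K}^{-,\mrm{b}}/\cat{K}^{\mrm{b}})(\pj R)$ are triangle equivalent to $\cat{K}^{\infty,\emptyset}(\pj R) \simeq \underline{\CM}R$, and they glue into $(\cat{K}^{\infty,\mrm{b}}/\cat{K}^{\mrm{b}})(\pj R)$ via the stable t-structure of Theorem~\ref{st-st1}. On the stable CM side, the short exact sequence $0 \to M_1 \to M_2 \to \opn{coker}(f) \to 0$ embeds $\underline{\CM}\opn{T}_2(R)$ as an analogous gluing of two copies of $\underline{\CM}R$ indexed by $M_1$ and $\opn{coker}(f)$. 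Matching these two gluings summand by summand shows that $F$ is essentially surjective, full and faithful; that $F$ is triangulated follows because the conflations defining the Frobenius structure on $\CM \opn{T}_2(R)$ are sent to mapping cone triangles of the spliced complexes.

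For the second half of the statement, the same splicing procedure with $\pj R$ replaced by $\Pj R$ yields $\underline{\LCM}\opn{T}_2(R) \simeq (\cat{K}^{\infty,\mrm{b}}/\cat{K}^{\mrm{b}})(\Pj R)$. The equivalence with $(\cat{K}^{\infty,\mrm{b}}/\cat{K}^{\mrm{b}})(\Ij R)$ is implemented by the pair $V \lten_R -$ and $\rhom_R(V,-)$, where $V$ is the two-sided injective resolution of $R$; these functors send unbounded complexes of projectives with bounded cohomology to unbounded complexes of injectives with bounded cohomology (and vice versa), and induce quasi-inverse triangle equivalences on the quotients because $V$ has finite projective and injective dimension on both sides. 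The main obstacle I anticipate is the bookkeeping required to verify that the splicing construction is genuinely functorial modulo $\cat{K}^{\mrm{b}}(\pj R)$ and realizes precisely the gluing dictated by the recollement; this is where the symmetry encoded by the triangle of recollements is essential.
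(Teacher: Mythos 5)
Your architecture is essentially the paper's: identify $\fmod\opn{T}_2(R)$ with the morphism category and $\CM\opn{T}_2(R)$ with the monomorphism subcategory (Propositions \ref{T2}, \ref{CMT2}), build a splicing functor into $(\cat{K}^{\infty,\mrm{b}}/\cat{K}^{\mrm{b}})(\pj R)$, and then use the fact that both sides carry a triangle of recollements respected by the functor, so that an equivalence on one constituent piece (a copy of $\underline{\CM}R\simeq\cat{K}^{\infty,\emptyset}(\pj R)$) propagates to the whole category via Proposition \ref{p20Apr30}; the $\Pj$/$\Ij$ comparison via the bimodule complex $V$ is likewise the paper's route. Two points, however, need repair. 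First, the splice as you describe it does not produce a complex: a projective resolution $Q^\bullet\to M_1$ is bounded above, so ``$Q^\bullet$ shifted into positive degrees'' is meaningless, and your connecting lift $\widetilde f:Q^\bullet\to P^\bullet$ points in the wrong direction to serve as the degree-zero differential from the nonpositive part to the positive part. What is actually needed in positive degrees is a projective \emph{coresolution}, and its existence is exactly where the Cohen--Macaulay hypothesis enters: a module in $\CM R$ admits a complete resolution (Proposition \ref{cm}(1), which rests on Lemma \ref{st-h4} and hence on the Iwanaga--Gorenstein condition), and one splices the nonpositive truncation of the complete resolution of the source with the positive truncation of the complete resolution of the target, the degree-zero differential factoring through the structure map. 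This is why the paper first converts monomorphisms to epimorphisms via $\opn{cok}$ (Proposition \ref{CMT2}(2)) before splicing. Second, you must also check that this assignment is well defined and functorial only after passing to $\cat{K}^{\mrm{b}}$-quotients and stable categories, and that it is a \emph{triangle} functor; this is the bulk of Section \ref{trieqrecoll} (Lemmas \ref{K8Jul55}--\ref{Fbar} and Proposition \ref{K665}) and is not automatic from ``conflations go to mapping cones.''

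On the large-module half: the equivalence $(\cat{K}^{\infty,\mrm{b}}/\cat{K}^{\mrm{b}})(\Pj R)\simeq(\cat{K}^{\infty,\mrm{b}}/\cat{K}^{\mrm{b}})(\Ij R)$ is not a formal consequence of $V$ having finite homological dimensions; it uses the Iyengar--Krause theorem that $-\otimes_R V$ is an equivalence $\cat{K}(\Pj R)\to\cat{K}(\Ij R)$ of \emph{homotopy} categories, together with the identification of $\cat{K}^{\infty,\emptyset}$ on both sides with totally acyclic complexes (Lemma \ref{st-h41}). The same input is what gives the stable t-structure $(\cat{K}^{\infty,\emptyset}(\Pj R),\cat{K}^{+,\mrm{b}}(\Pj R))$, i.e.\ the third edge of the triangle of recollements in the big setting, which your argument needs but which cannot be obtained by the duality $\Hom_R(-,R)$ used in the finitely generated case.
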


This is proved by using the above triangles of recollements in 
$(\cat{K}^{\infty,\mrm{b}}/  \cat{K}^{\mrm{b}})(\pj {R} )$ and
$(\cat{K}^{\infty,\mrm{b}}/  \cat{K}^{\mrm{b}})(\Pj {R} )$.

Moreover, we study the case of a Frobenius category $\mcal{F}$ with the additive subcategory $\mcal{P}$ 
of pro jective-injective objects.
Then we can apply results of  $\cat{K}^{\infty, \mrm{b}}(\mcal{B})$ to this case.
According to Theorem \ref{thm01}, we have triangle equivalences
\[\begin{aligned}
(\cat{K}^{+,\mrm{b}}/\cat{K}^{\mrm{b}})(\mcal{P}, \mcal{F})\simeq 
(\cat{K}^{-,\mrm{b}}/\cat{K}^{\mrm{b}})(\mcal{P}, \mcal{F})\simeq
\underline{\mcal{F}} \\
\underline{\Morph}^{\mrm{e}}(\mcal{F}) \simeq
\underline{\Morph}^{\mrm{m}}(\mcal{F}) \simeq
(\cat{K}^{\infty,\mrm{b}}/\cat{K}^{\mrm{b}})(\mcal{P}, \mcal{F}) .
\end{aligned}\]

\medskip
\noindent{\bf Conventions }
For a ring $R$, we denote by $\Mod R$ (resp., $\fmod R$) the category of right 
(resp., finitely presented right) $R$-modules, and denote by $\Pj {R}$ (resp., $\Ij R$, $\pj {R}$) 
the additive full subcategory of 
$\cat{Mod}R$ consisting of projective (resp., injective, finitely generated projective) modules.
For right (resp., left) $R$-module $M_R$ (resp., ${}_{R}N$), we denote by 
$\opn{idim}M_R$ (resp., $\opn{idim}{}_{R}N$) is the injective dimension
of $M_{R}$ (resp., ${}_{R}N$), and denote by 
$\opn{pdim}M_R$ (resp., $\opn{pdim}{}_{R}N$) is the projective dimension
of $M_{R}$ (resp., ${}_{R}N$).
For a triangulated category, we denote by $\Sigma$ the translation functor.

For an additive category $\mcal{B}$, we denote by $\cat{C}(\mcal{B})$ the category of complexes over $\mcal{B}$,
and by $\cat{K}(\mcal{B})$ (resp., $\cat{K}^{+}(\mcal{B})$, $\cat{K}^{-}(\mcal{B})$, $\cat{K}^{\mrm{b}}(\mcal{B})$)
the homotopy category of complexes 
(resp., bounded below complexes, bounded above complexes, bounded complexes) of $\mcal{B}$.
When $\mcal{B}$ is a full subcategory of an abelian category, we denote by
$\cat{K}^{\infty,\mrm{b}}(\mcal{B})$ (resp., $\cat{K}^{+,\mrm{b}}(\mcal{B})$ , $\cat{K}^{-,\mrm{b}}(\mcal{B})$), 
the full subcategory of $\cat{K}(\mcal{B})$ (resp., $\cat{K}^{+}(\mcal{B})$ , $\cat{K}^{-}(\mcal{B})$)
consisting of complexes with bounded homologies, and by
$\cat{K}^{\infty, \emptyset}(\mcal{B})$
the homotopy category of acyclic complexes, that is, complexes with null homologies.
For a complex $X = (X^{i}, d^{i})$, we define 
the following truncations:
\[\begin{aligned}
{\tau}_{\geq n}X & : \cdots \arr 0 \arr X^{n} 
\arr X^{n+1} \arr X^{n+2} \arr \cdots ,\\
{\tau}_{\leq n}X & : \cdots \arr X^{n-2} \arr X^{n-1}
\arr X^{n} \arr 0 \arr \cdots .
\end{aligned}\] 
For an object $M$ in an additive category $\mcal{B}$, 
we denote by $\Add M$ (resp., $\cat{add}M$) the full subcategory of $\mcal{B}$
consisting of objects which are isomorphic to summands of (resp., finite) coproducts of copies of $M$.

For an abelian category $\mcal{A}$, we denote by $\Inj \mcal{A}$ (resp., $\Proj\mcal{A}$)
the category of injective (resp., projective)
objects of $\mcal{A}$, and denote by $\cat{D}(\mcal{A})$ (resp., $\cat{D}^{\mrm{b}}(\mcal{A})$) 
the derived category of complexes (resp., bounded complexes) of objects of $\mcal{A}$.

\medskip
\noindent{\bf Acknowledgement }
We would like to thank Professor B. Keller and Professor H. Krause for helpful conversations and correspondences.

\section{Stable t-structures and recollements}\label{t-strecoll}

We recall the notion of (co)localizations and recollements  
and study the relationship with stable t-structures. 
This correspondence enables us to understand (co)localizations and recollements 
by way of subcategories instead of quotient categories. 
Indeed, Proposition~\ref{t-st-first} and Corollary~\ref{t-st-third} show that 
a (co)localization induces a stable t-structure, and vice versa. 
So the notion of (co)localizations is tantamount to that of stable t-structures. 
A recollement is a combination of a localization and a colocalization. 
In Proposition \ref{st-t-1second} we see that a recollement corresponds to two consecutive stable t-structures. 
Even more symmetric situation a triangle of recollements consists of three consecutive and recursive stable t-structures 
$(\mcal{U}, \mcal{V}), (\mcal{V}, \mcal{W})$, and $(\mcal{W}, \mcal{U})$. 
Obivously we obtain three recollements from a triangle of recollements in 
Proposition \ref{added}. 
Whenever we have a triangle of recollements, all the relevant subcategories and the quotient categories are equivalent. 
Moreover, we study a triangle functor between triangulated categories equipped with stable t-structures. 
It is natural to ask when a functor respects localizations (colocalizations, recollements etc.), 
in other words, whether it is commutative with quotient functors and their adjoint functors. 
The condition is simply given by stable t-structures (Lemma \ref{st-t-1second} and 
Corollary \ref{added}). 
The upshot is Proposition \ref{p20Apr30}: a triangle functor which respects 
a triangle of recollements 
is an equivalence if its restriction to some subcategory is so.

\vspace{2mm}

First we see that a (co)localization and a stable t-structure 
essentially describe the same phenomenon, 
using the similar methods for recollements \cite{BBD}. 

\begin{defn} \label{st-t0} 
Let $i^{*}:\mcal{D} \to \mcal{D}'$ be a triangle functors between 
triangulated categories $\mcal{D}$ and $\mcal{D}'$. 
\begin{enumerate}
\item If $i^{*}$ has a fully faithful right adjoint $i_{*}:\mcal{D}' \to \mcal{D}$, 
then $i^{*}$ or a diagram
$\xymatrix{
\mcal{D} \ar@<1ex>[r]^{i^*} 
& \mcal{D}'  \ar@<1ex>[l]^{i_*}}$
is called a {\it localization} of $\mcal{D}$. 
\item 
If $i^{*}$ has a fully faithful left adjoint $i_{!}:\mcal{D}' \to \mcal{D}$,
then $i^{*}$ or a diagram
$\xymatrix{
\mcal{D}' \ar@<1ex>[r]^{i_!} 
& \mcal{D}  \ar@<1ex>[l]^{i^*}}$
is called a {\it colocalization} of $\mcal{D}$. 
\end{enumerate}
\end{defn}

\begin{prop}[\cite{Mi1}] \label{t-st-first} 
Let $\mcal{D}$ be a triangulated category.
\begin{enumerate}
\item[I]
Let $(\mcal{U} , \mcal{V})$ be a stable t-structure in 
$\mcal{D}$. Then we have the following: 
\begin{enumerate}
\item[(1)] The canonical embedding functors $i_* : \mcal{U} \to \mcal{D}$ and $j_* : \mcal{V} \to \mcal{D}$ 
have a right adjoint $i^!:  \mcal{D} \to \mcal{U}$ and a left adjoint $j^*:  \mcal{D} \to \mcal{V}$ 
respectively such that $\opn{Ker}i^!=\mcal{V}$ and $\opn{Ker}j^*=\mcal{U}$.
In particular,  
$\xymatrix{
\mcal{U} \ar@<1ex>[r]^{i_*} 
& \mcal{D}  \ar@<1ex>[l]^{i^!}}$ is a colocalization and 
$\xymatrix{
\mcal{D} \ar@<1ex>[r]^{j^*} 
& \mcal{V}  \ar@<1ex>[l]^{j_*}}$
is a localization of $\mcal{D}$. 
\item[(2)] 
The adjunction arrows $i_{*}i^{!} \to \bsym{1}_{\mcal{D}}$
and $\bsym{1}_{\mcal{D}} \to j_{*}j^{*}$ give a triangle
$i_{*}i^{!}X \to X \to j_{*}j^{*}X \to \Sigma i_{*}i^{!}X$
for each $X \in \mcal{D}$. 
\item[(3)] $i^{!}$ and $j^{*}$ induce triangulated equivalences
$\mcal{D}/\mcal{V} \simeq \mcal{U}$ and $\mcal{D}/\mcal{U} \simeq \mcal{V}$ respectively. 
\end{enumerate} 
\end{enumerate}

\begin{enumerate}
\item[II]
\begin{enumerate}
\item[(4)] 
If $ \xymatrix{
\mcal{D} \ar@<1ex>[r]^{i^*} 
& \mcal{D}' \ar@<1ex>[l]^{i_*}
} $ is a localization of a triangulated category $\mcal{D}$,
then $(\opn{Ker}i^* , \opn{Im}i_* )$ is a stable t-structure in $\mcal{D}$.
In particular, $i^*$ induces an equivalence $\mcal{D}/\opn{Ker}i^*\simeq\mcal{D}'$.
\item[(5)] If 
$ \xymatrix{
\mcal{D}' \ar@<1ex>[r]^{i_!} 
& \mcal{D}  \ar@<1ex>[l]^{i^*}
} $ is a colocalization of a triangulated category $\mcal{D}$, 
then $(\opn{Im}i_! , \opn{Ker}i^* )$ is a stable t-structure in $\mcal{D}$.
In particular, $i^*$ induces an equivalence $\mcal{D}/\opn{Ker}i^*\simeq\mcal{D}'$.
\end{enumerate}
\end{enumerate}
\end{prop}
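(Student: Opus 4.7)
The plan is to build everything from axiom (c) of Definition \ref{st-tprime}: for each $X\in\mcal{D}$ one chooses a triangle
\[
U_X \xarr{\alpha_X} X \xarr{\beta_X} V_X \xarr{} \Sigma U_X
\]
with $U_X\in\mcal{U}$, $V_X\in\mcal{V}$, and then checks that this triangle is essentially unique. Uniqueness will follow from the orthogonality axiom (b): given any morphism $f\colon X\to Y$, the composite $U_X\to X\to Y\to V_Y$ lies in $\opn{Hom}_{\mcal{D}}(\mcal{U},\mcal{V})=0$, so by the standard triangulated lifting argument $f$ extends uniquely to morphisms $U_X\to U_Y$ and $V_X\to V_Y$. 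This makes $X\mapsto U_X$ and $X\mapsto V_X$ functorial, and the assignments are the candidates for $i^!$ and $j^*$.

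For Part I, statement (1): the Hom vanishing in axiom (b) together with the triangle immediately yields, for $U\in\mcal{U}$ and $V\in\mcal{V}$,
\[
\opn{Hom}_{\mcal{D}}(U,X)\iso\opn{Hom}_{\mcal{U}}(U,U_X),\qquad
\opn{Hom}_{\mcal{D}}(X,V)\iso\opn{Hom}_{\mcal{V}}(V_X,V),
\]
which is exactly the adjunction data certifying $i^!\dashv i_*$ and $j_*\dashv j^*$, and forcing $\opn{Ker}i^!=\mcal{V}$, $\opn{Ker}j^*=\mcal{U}$ (an object has trivial image under $i^!$ iff it is orthogonal to all of $\mcal{U}$ on the left, which by the triangle means it is in $\mcal{V}$; symmetrically for $j^*$). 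Statement (2) is then immediate because the unit $\bsym{1}\to j_*j^*$ and counit $i_*i^!\to\bsym{1}$ are, by construction, precisely $\beta_X$ and $\alpha_X$. For (3), one verifies that the composite $\mcal{U}\inj\mcal{D}\surj\mcal{D}/\mcal{V}$ is fully faithful (Hom-isomorphism from (1)) and essentially surjective (the triangle shows every $X$ becomes isomorphic to $U_X$ in $\mcal{D}/\mcal{V}$), and dually for $j^*$.

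For Part II, statement (4): given a localization $i^*\colon\mcal{D}\to\mcal{D}'$ with fully faithful right adjoint $i_*$, set $\mcal{U}:=\opn{Ker}i^*$ and $\mcal{V}:=\opn{Im}i_*$. Both are stable under $\Sigma$ since $i^*$ and $i_*$ commute with $\Sigma$. Hom vanishing $\opn{Hom}(\mcal{U},\mcal{V})=0$ follows from adjunction: $\opn{Hom}_{\mcal{D}}(U,i_*X)\cong\opn{Hom}_{\mcal{D}'}(i^*U,X)=\opn{Hom}_{\mcal{D}'}(0,X)=0$. For the decomposition triangle, extend the counit $i_*i^*X\to X$ to a triangle $C\to i_*i^*X\to X\to \Sigma C$; rotating gives $i_*i^*X\to X\to \Sigma C\to$, and applying $i^*$ to the original shows $i^*(\Sigma C)=0$ (using $i^*i_*\simeq\bsym{1}_{\mcal{D}'}$ from full faithfulness of $i_*$), so $\Sigma C\in\mcal{U}$; rename and rotate to obtain the required triangle. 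Statement (5) is the dual, swapping the roles of adjoints.

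The main obstacle will be keeping the two orientations (localization versus colocalization) consistent and verifying that the functoriality extracted from axiom (b) is well-defined on morphisms and gives genuine triangle functors; in particular showing that $i^!$ and $j^*$ commute with $\Sigma$ and send triangles to triangles requires invoking the $3\times 3$ lemma applied to the decomposition triangles for the three vertices of a given triangle in $\mcal{D}$. All of the above is standard and essentially contained in \cite{Mi1}, so the write-up will be brief, emphasizing the adjunction isomorphism derived from axiom (b) and the uniqueness of the decomposition triangle as the common mechanism underlying all six assertions.
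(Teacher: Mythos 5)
The paper gives no proof of this proposition at all --- it is quoted from \cite{Mi1} --- so there is nothing internal to compare against; your sketch follows the standard argument (decomposition triangle from axiom (c), functoriality and uniqueness from the orthogonality axiom (b), the two $\Hom$-isomorphisms as the adjunction data), and Part I and the general strategy for Part II are fine at the level of a sketch.

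There is, however, one step that fails as written: in II(4) you ``extend the counit $i_*i^*X\to X$ to a triangle.'' For a \emph{localization}, $i^*$ is the \emph{left} adjoint of the pair $(i^*,i_*)$, so the counit is a transformation $i^*i_*\to\bsym{1}_{\mcal{D}'}$ of endofunctors of $\mcal{D}'$; there is no natural morphism $i_*i^*X\to X$ in $\mcal{D}$, and your construction cannot start. The morphism you have available is the \emph{unit} $\eta_X:X\to i_*i^*X$. Completing it to a triangle $C\to X\xarr{\eta_X} i_*i^*X\to\Sigma C$ and applying $i^*$ (the triangle identity $\epsilon_{i^*X}\circ i^*\eta_X=1$ together with $\epsilon$ being invertible, since $i_*$ is fully faithful, shows $i^*\eta_X$ is an isomorphism) gives $i^*C=0$, so $C\in\opn{Ker}i^*$ and the triangle is already in the required order $U\to X\to V$ for the stable t-structure $(\opn{Ker}i^*,\opn{Im}i_*)$ --- no rotation is needed. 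Your version, even granting the phantom morphism, would produce a triangle with the $\opn{Im}i_*$-term first and the $\opn{Ker}i^*$-term second, i.e.\ it certifies the t-structure of case (5), not (4). The same carelessness with orientation shows up in Part I, where you write $i^!\dashv i_*$ and $j_*\dashv j^*$ although your displayed isomorphisms correctly exhibit $i_*\dashv i^!$ and $j^*\dashv j_*$; since the entire content of this proposition is bookkeeping of adjunction directions, these slips should be corrected before the write-up.
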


\begin{defn} 
A sequence of triangle functors between triangulated categories 
\[\xymatrix{
\mcal{D}' \ar@<1ex>[r]^{i_*}
& \mcal{D} \ar@<1ex>[l]^{i^!} \ar@<1ex>[r]^{j^*} 
& \mcal{D}''  \ar@<1ex>[l]^{j_*}
}\] is called a \emph{localization exact sequence}
if it satisfies the following: 
\begin{enumerate}
\rmitem{a} $ \xymatrix{
\mcal{D} \ar@<1ex>[r]^{j^*} 
& \mcal{D}'' \ar@<1ex>[l]^{j_*}
} $ is a localization and
$ \xymatrix{
\mcal{D}' \ar@<1ex>[r]^{i_*} 
& \mcal{D}  \ar@<1ex>[l]^{i^!}
} $ is a colocalization of $\mcal{D}$. 
\rmitem{b}
there are canonical embeddings 
$\opn{Im} i_* \inj \opn{Ker} j^*$ and $\opn{Im} j_* \inj \opn{Ker} i^!$ which
are equivalences. 
\end{enumerate} 
If this is the case, 
$(\opn{Ker} j^* ,  \opn{Im} j_* ) = ( \opn{Im} i_* , \opn{Ker} i^! )$ is 
a stable t-structure in $\mcal{D}$ by Proposition \ref{t-st-first}(4)(5). 
\end{defn}

Any (co)localization is completed as a localization exact sequence, which is 
an easy application of Proposition \ref{t-st-first}.

\begin{cor}\label{t-st-third}
\begin{enumerate}
\item If $ \xymatrix{
\mcal{D} \ar@<1ex>[r]^{j^*} 
& \mcal{D}'' \ar@<1ex>[l]^{j_*}
} $ is a localization of $\mcal{D}$, 
there exists a localization exact sequence 
$\xymatrix{
\mcal{D}' \ar@<1ex>[r]^{i_*}
& \mcal{D} \ar@<1ex>[l]^{i^!} \ar@<1ex>[r]^{j^*} 
& \mcal{D}''  \ar@<1ex>[l]^{j_*}}$
\item
If $ \xymatrix{
\mcal{D}'' \ar@<1ex>[r]^{j_!} 
& \mcal{D}  \ar@<1ex>[l]^{j^*}
} $ is a colocalization of $\mcal{D}$, 
there exists a localization exact sequence 
$\xymatrix{
\mcal{D}'' \ar@<1ex>[r]^{j_!}
& \mcal{D} \ar@<1ex>[l]^{j^*} \ar@<1ex>[r]^{i^*} 
& \mcal{D}'  \ar@<1ex>[l]^{i_*}}$
\end{enumerate}
\end{cor}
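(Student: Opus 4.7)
The plan is to extract a stable t-structure from the given (co)localization via Proposition \ref{t-st-first}(4) or (5), and then read off the missing adjoint functor from Proposition \ref{t-st-first}(1). The key observation is that each embedding participating in a stable t-structure carries an adjoint on one side, so the two halves of a localization exact sequence correspond precisely to the two halves of a single stable t-structure. This turns the corollary into a bookkeeping exercise.

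For (1), given the localization $j^* \colon \mcal{D} \to \mcal{D}''$ with fully faithful right adjoint $j_*$, Proposition \ref{t-st-first}(4) yields the stable t-structure $(\opn{Ker} j^*, \opn{Im} j_*)$ in $\mcal{D}$. Set $\mcal{D}' := \opn{Ker} j^*$ and let $i_* \colon \mcal{D}' \inj \mcal{D}$ be the inclusion. Applying Proposition \ref{t-st-first}(1) to this stable t-structure, $i_*$ acquires a right adjoint $i^! \colon \mcal{D} \to \mcal{D}'$ with $\opn{Ker} i^! = \opn{Im} j_*$, so that $(i_*, i^!)$ is a colocalization. The tautological identifications $\opn{Im} i_* = \mcal{D}' = \opn{Ker} j^*$ and $\opn{Ker} i^! = \opn{Im} j_*$ are the canonical equivalences required by condition (b) of the definition, while condition (a) is immediate by construction.

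Part (2) is dual. Starting from the colocalization $(j_!, j^*)$, Proposition \ref{t-st-first}(5) supplies the stable t-structure $(\opn{Im} j_!, \opn{Ker} j^*)$. Take $\mcal{D}' := \opn{Ker} j^*$ with inclusion $i_* \colon \mcal{D}' \inj \mcal{D}$; Proposition \ref{t-st-first}(1) then produces a left adjoint $i^* \colon \mcal{D} \to \mcal{D}'$ with $\opn{Ker} i^* = \opn{Im} j_!$, giving a localization $(i^*, i_*)$ whose image and kernel again match those of the given colocalization as needed.

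No substantive obstacle arises; the only point that demands attention is the adjoint-convention bookkeeping. One must verify that the complementary embedding produced by Proposition \ref{t-st-first}(1) plays the role a localization exact sequence asks of it, namely a colocalization in (1) and a localization in (2), which comes down to checking which of the two embeddings of the stable t-structure is assigned a right and which a left adjoint by Proposition \ref{t-st-first}(1).
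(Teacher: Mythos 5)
Your proof is correct and is essentially identical to the paper's: both parts extract the stable t-structure from Proposition \ref{t-st-first}(4) (resp.\ (5)) and then invoke Proposition \ref{t-st-first}(1) to supply the missing adjoint on the complementary embedding, checking conditions (a) and (b) tautologically. The paper merely dispatches part (2) by duality, whereas you spell it out, but the argument is the same.
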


\begin{proof}
(2) is a dual of (1).

(1) By Proposition \ref{t-st-first}(4), we have a stable t-structure $(\mcal{U},\mcal{V})$
for $\mcal{U}:=\opn{Ker}j^*$ and $\mcal{V}:=\opn{Im}j_*$.
By Proposition \ref{t-st-first}(1), the inclusion functors $i_*:\mcal{U}\to\mcal{D}$
and $j_*:\mcal{V}\to\mcal{D}$
have a right adjoint $i^!:\mcal{D}\to\mcal{U}$ and a left adjoint $j^*:\mcal{D}\to\mcal{V}$ 
respectively such that $\opn{Ker}i^!=\mcal{V}$ and $\opn{Ker}j^*=\mcal{U}$.
Thus the conditions (a) and (b) are satisfied.
\end{proof}

The following proposition tells us how stable t-structures are inherited 
under subcategories and quotient categories. 

\begin{prop} \label{st-stK}
Let $\mathcal{D}$ be a triangulated category, $\mathcal{C}$ a thick
subcategory of $\mcal{D}$, and 
$Q:\mathcal{D} \to \mathcal{D}/\mathcal{C}$ the canonical quotient (\cite{Ne2}).
For a stable $t$-structure $(\mathcal{U}, \mathcal{V})$ in $\mathcal{D}$, the following are equivalent.
\begin{enumerate}
\item  $(\{Q(\mathcal{U})\}, \{Q(\mathcal{V})\})$ is a stable $t$-structure in $\mathcal{D}/\mathcal{C}$.
\item  $(\mathcal{U}\cap\mathcal{C}, \mathcal{V}\cap\mathcal{C})$ is a stable $t$-structure in 
$\mathcal{C}$.
\end{enumerate}

In particular, if $\mathcal{C}$ is a triangulated full subcategory 
of $\mathcal{U}$ (resp., $\mathcal{V}$), we can consider
that $\mathcal{V}$ (resp., $\mathcal{U}$) is a full subcategory of $\mathcal{D}/\mathcal{C}$, and
$(\mathcal{U}/\mathcal{C}, \mathcal{V})$ (resp., $(\mathcal{U}, \mathcal{V}/\mathcal{C})$)
is a stable $t$-structure in $\mathcal{D}/\mathcal{C}$.

Here $\{Q(\mathcal{U})\}$ is the full subcategory of $\mcal{D}/\mcal{C}$ consisting of objects
$Q(X)$ for $X \in \mcal{U}$.
\end{prop}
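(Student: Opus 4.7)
The plan is to verify the three axioms of Definition~\ref{st-tprime} for both candidate stable $t$-structures; axiom (a) ($\Sigma$-closure) is automatic in both directions.

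For $(1) \Rightarrow (2)$, axiom (b) follows immediately from fullness of $\mcal{C} \inj \mcal{D}$, since $\Hom_\mcal{C}(U,V) = \Hom_\mcal{D}(U,V) = 0$ for $U \in \mcal{U}\cap\mcal{C}$, $V \in \mcal{V}\cap\mcal{C}$. For axiom (c), given $C \in \mcal{C}$ I would take the decomposition triangle $U \to C \to V \to \Sigma U$ afforded by $(\mcal{U}, \mcal{V})$ in $\mcal{D}$, and then force $U, V \in \mcal{C}$. Applying $Q$ with $Q(C)=0$ yields $Q(V) \cong \Sigma Q(U)$ in $\mcal{D}/\mcal{C}$, so $Q(V)$ lies in $\{Q(\mcal{U})\} \cap \{Q(\mcal{V})\}$; the orthogonality in the supposed stable $t$-structure then kills $\opn{id}_{Q(V)}$, so $Q(V) = 0$, and by thickness of $\mcal{C}$ we conclude $V \in \mcal{C}$, symmetrically $U \in \mcal{C}$.

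For $(2) \Rightarrow (1)$, axiom (c) is immediate by applying $Q$ to the $\mcal{D}$-decomposition. The substance is axiom (b), namely $\Hom_{\mcal{D}/\mcal{C}}(Q(U), Q(V)) = 0$ for $U \in \mcal{U}$, $V \in \mcal{V}$. I would represent an arbitrary such morphism by a left roof $U \xrightarrow{g} V' \xleftarrow{t} V$ with $C' := \opn{cone}(t) \in \mcal{C}$, decompose $C'_U \to C' \xrightarrow{\pi} C'_V \to \Sigma C'_U$ in $\mcal{C}$ using (2), and use $\Hom_\mcal{D}(C'_U, \Sigma V) = 0$ to factor the connecting map $C' \to \Sigma V$ through $\pi$. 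Setting $V'' := \opn{cone}(\Sigma^{-1} C'_V \to V)$, which lies in $\mcal{V}$ since $\mcal{V}$ is triangulated and $\Sigma^{-1} C'_V, V \in \mcal{V}$, TR3 produces a morphism of triangles with verticals $(\opn{id}_V, \psi\colon V' \to V'', \pi)$, and the $3 \times 3$ lemma identifies $\opn{cone}(\psi) \cong \Sigma C'_U \in \mcal{C}$. Hence $Q(\psi)$ is an isomorphism and the morphism is equally represented by $(U \xrightarrow{\psi g} V'' \xleftarrow{\psi t} V)$; but now $\psi g \in \Hom_\mcal{D}(\mcal{U}, \mcal{V}) = 0$, so the morphism is zero.

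The main obstacle is this last diagram chase, specifically assembling TR3 and the $3 \times 3$ lemma so as to produce $\psi$ with $\opn{cone}(\psi) \cong \Sigma C'_U$ cleanly. For the ``In particular'' assertion, the hypothesis $\mcal{C} \subset \mcal{U}$ gives $\mcal{U} \cap \mcal{C} = \mcal{C}$ and $\mcal{V} \cap \mcal{C} \subset \mcal{U} \cap \mcal{V} = 0$, so $(\mcal{C}, 0)$ is trivially a stable $t$-structure in $\mcal{C}$ and the main equivalence applies. To identify $\{Q(\mcal{V})\}$ with $\mcal{V}$ inside $\mcal{D}/\mcal{C}$, I would show $Q|_\mcal{V}$ is fully faithful: any left roof $V_1 \to V' \leftarrow V_2$ between objects of $\mcal{V}$, with cone $U \in \mcal{C} \subset \mcal{U}$, fits in a triangle $V_2 \to V' \to U \to \Sigma V_2$ which splits because $\Hom_\mcal{D}(U, \Sigma V_2) = 0$; the $U$-component of the resulting map $V_1 \to V' \cong V_2 \oplus U$ vanishes because $\Hom_\mcal{D}(V_1, U) = 0$, so the roof comes from a genuine $\mcal{D}$-morphism $V_1 \to V_2$.
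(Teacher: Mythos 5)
Your proof of the equivalence $(1)\Leftrightarrow(2)$ is correct and is essentially the paper's argument. The direction $(1)\Rightarrow(2)$ is identical. For $(2)\Rightarrow(1)$ you run the mirror image of the paper's roof manipulation: the paper writes the morphism as a fraction $X\xleftarrow{s}X'\xrightarrow{f}Y$ and replaces the \emph{source} $X'$ by an object of $\mcal{U}$, using the $\mcal{U}\cap\mcal{C}$-part $L$ of the cone of $s$ together with $\Hom_{\mcal{D}}(X,N)=0$; you instead replace the \emph{target} $V'$ of a right fraction by an object of $\mcal{V}$, using the $\mcal{V}\cap\mcal{C}$-part $C'_V$ of the cone together with $\Hom_{\mcal{D}}(C'_U,\Sigma V)=0$. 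Both routes finish by applying $\Hom_{\mcal{D}}(\mcal{U},\mcal{V})=0$ to the new numerator, and your TR3 plus $3\times 3$ step (yielding $\opn{cone}(\psi)\cong\opn{cone}(\pi)\cong\Sigma C'_U\in\mcal{C}$, hence $Q(\psi)$ invertible and $\opn{cone}(\psi t)=C'_V\in\mcal{C}$) is sound. This is a formal dual, not a genuinely different method.

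There is, however, a wrong step in your treatment of the ``In particular'' assertion: you claim $\Hom_{\mcal{D}}(V_1,U)=0$ for $V_1\in\mcal{V}$ and $U\in\mcal{U}$. A stable $t$-structure only gives $\Hom_{\mcal{D}}(\mcal{U},\mcal{V})=0$; morphisms in the other direction need not vanish (in the recollement situations of Sections 2--3 of the paper they generally do not), so this assertion is false in general. The conclusion you want survives for a different and easier reason: since $U\in\mcal{C}$ we have $Q(U)=0$, so under the splitting $V'\cong V_2\oplus U$ the $U$-component of $V_1\to V'$ is annihilated by $Q$ in any case, and the roof equals $Q$ of the $V_2$-component. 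You should also record faithfulness of $Q|_{\mcal{V}}$ (if $Q(g)=0$ for $g\colon V_1\to V_2$ then $g$ factors through an object of $\mcal{C}\subseteq\mcal{U}$, hence $g=0$ by $\Hom_{\mcal{D}}(\mcal{U},\mcal{V})=0$) before identifying $\mcal{V}$ with a full subcategory of $\mcal{D}/\mcal{C}$. Since the paper states the ``In particular'' without proof, this flaw does not affect the comparison with the printed argument, but it should be corrected.
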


\begin{proof}
(1) $\Rightarrow$ (2).
For any $B \in \mathcal{C}$, we have a triangle
\[
A \to B \to C \to \Sigma A
\]
with $A \in \mcal{U}$, $C\in \mcal{V}$.
Then we have a triangle $Q(A) \to Q(B) \to Q(C) \to \Sigma Q(A)$.
Since $Q(B)$ is a null object in $\mcal{D}/\mcal{C}$, 
we have $Q(C) \simeq \Sigma Q(A) \in Q(\mcal{U})\cap Q(\mcal{V})$.
Therefore $Q(A)$ and $Q(C)$ are null objects, and hence
$A \in \mcal{U}\cap\mcal{C}$ and $C\in \mcal{V}\cap\mcal{C}$.
\par\noindent
(2) $\Rightarrow$ (1).
It suffices to show that $\Hom_{\mcal{D}/\mcal{C}}(Q(X), Q(Y))=0$
for any $X \in \mcal{U}$, $Y \in \mcal{V}$.
For $\overline{f} \in 
\Hom_{\mcal{D}/\mcal{C}}(Q(X), Q(Y))$, $\overline{f}$ is represented by
\[\xymatrix{
X' \ar[d]_{s} \ar[dr]^{f} \\
X & Y \\
}\]
where $X' \xarr{s} X \to M \to \Sigma X'$ is a triangle with $M \in \mcal{C}$. 
By the assumption, there exist a triangle $L \to M \to N \to \Sigma L$ such that
$L \in \mcal{U}\cap\mcal{C}$ and $N\in \mcal{V}\cap\mcal{C}$.
Since $\Hom_{\mcal{D}}(X, N)=0$, we have a morphism between triangles
\[\xymatrix{
X'' \ar[d]^{t} \ar[r]^{s'} & X \ar@{=}[d] \ar[r] & L \ar[d] \ar[r] & \Sigma X'' \ar[d]\\
X' \ar[r]^{s} & X \ar[r] & M \ar[r] & \Sigma X'
}\]  
Then we have $X'' \in \mcal{U}$, and $\overline{f}$ is represented by
\[\xymatrix{
X'' \ar[d]_{s'} \ar[dr]^{ft} \\
X & Y \\
}\]
Therefore $ft=0$,  and hence $\overline{f}=0$.
\end{proof}

\begin{rem}\label{st-stKR}
In Proposition \ref{st-stK}, the implication $(2) \Rightarrow (1)$ doesn't need
the thickness of $\mcal{C}$ (see \cite{Ne2}).
\end{rem}

Next let take us back to the notion of a recollement which consists of a localization and 
a colocalization. And we recall that a recollement corresponds to 
two consecutive stable t-structures.

\begin{defn} [\cite{BBD}] 
We call a diagram
\[\xymatrix{
\mcal{D'} \ar@<-1ex>[r]^{i_{*}}
& \mcal{D}
\ar@<2ex>[l]^{i^{!}} \ar@<-2ex>[l]_{i^{*}}  \ar@<-1ex>[r]^{j^{*}} 
& \mcal{D''}
\ar@<-2ex>[l]_{j_{!}} \ar@<2ex>[l]^{j_{*}}
}\]
of triangulated categories and functors a \emph{recollement} if it satisfies the following:
\begin{enumerate}
\item $i_*$, $j_!$, and $j_*$ are fully faithful.
\item $(i^* , i_* )$, $(i_* , i^!)$, $(j_! , j^* )$, and $(j^* , j_* )$ are adjoint pairs. 
\item there are canonical embeddings $\opn{Im} j_! \inj \opn{Ker}i^*$, $\opn{Im}i_* \inj\opn{Ker} j^*$, and \\
$\opn{Im} j_*\inj\opn{Ker}i^!$ which are equivalences.
\end{enumerate}
\end{defn}

\begin{prop} [\cite{Mi1}] \label{st-t-1second}
~
\begin{enumerate}
\item Let 
\[\xymatrix{
\mcal{D'} \ar@<-1ex>[r]^{i_{*}}
& \mcal{D}
\ar@<2ex>[l]^{i^{!}} \ar@<-2ex>[l]_{i^{*}}  \ar@<-1ex>[r]^{j^{*}} 
& \mcal{D''}
\ar@<-2ex>[l]_{j_{!}} \ar@<2ex>[l]^{j_{*}}
}\] be a recollement. Then 
$(\mcal{U}, \mcal{V} )$ and $(\mcal{V}, \mcal{W})$ 
are stable t-structures in $\mcal{D}$  
where we put $\mcal{U} = \opn{Im}j_{!}$, $\mcal{V} =\opn{Im}i_{*}$ and $\mcal{W} =\opn{Im}j_{*}$. 

\item  
Let $(\mcal{U}, \mcal{V})$ and $(\mcal{V}, \mcal{W})$ be stable t-structures in $\mcal{D}$.
Then for the canonical embedding $i_{*}:\mcal{V} \to \mcal{D}$,
there is a recollement 
\[\xymatrix{
\mcal{V} \ar@<-1ex>[r]^{i_{*}}
& \mcal{D}
\ar@<-2ex>[l]_{i^{*}} \ar@<2ex>[l]^{i^{!}} \ar@<-1ex>[r]^{j^{*}} 
& \mcal{D}/\mcal{V}
\ar@<-2ex>[l]_{j_{!}} \ar@<2ex>[l]^{j_{*}}
}\]
such that $\opn{Im}j_! = \mcal{U}$ and 
$\opn{Im}j_* =\mcal{W}$.
\end{enumerate}

In each cases, every object $X$ of ${\mcal D}$ has triangles 
\[ \begin{aligned} i_* i^! X \to X \to j_* j^* X \to \Sigma i_* i^! X , \\
j_! j^* X \to X \to i_* i^* X \to \Sigma j_! j^* X .  
\end{aligned} \]
\end{prop}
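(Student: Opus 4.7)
The plan is to handle the two directions separately; both are essentially exercises in tracking adjoints through the data already supplied by Proposition~\ref{t-st-first}.

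For (1), I would verify the three defining conditions of a stable $t$-structure for each of $(\mcal{U},\mcal{V})=(\opn{Im}j_{!},\opn{Im}i_{*})$ and $(\mcal{V},\mcal{W})=(\opn{Im}i_{*},\opn{Im}j_{*})$. Closure under $\Sigma$ is automatic because $j_{!},i_{*},j_{*}$ are triangle functors. The Hom-vanishings $\Hom(\mcal{U},\mcal{V})=0$ and $\Hom(\mcal{V},\mcal{W})=0$ reduce, via the adjunctions $(j_{!},j^{*})$ and $(j^{*},j_{*})$, to the identity $j^{*}i_{*}=0$ that follows from $\opn{Im}i_{*}=\opn{Ker}j^{*}$. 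For the required triangles I would complete the counit $j_{!}j^{*}X\to X$ and the unit $X\to j_{*}j^{*}X$ to triangles; applying $j^{*}$ and using $j^{*}j_{!}\cong\mrm{id}$ and $j^{*}j_{*}\cong\mrm{id}$ (fully faithfulness of $j_{!}$ and $j_{*}$) shows that the third terms lie in $\opn{Ker}j^{*}=\opn{Im}i_{*}$, and comparison with the unit/counit for $i_{*}$ identifies them as $i_{*}i^{*}X$ and $i_{*}i^{!}X$ respectively.

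For (2), I would take $j^{*}$ to be the canonical quotient $Q:\mcal{D}\to\mcal{D}/\mcal{V}$. Proposition~\ref{t-st-first}(3) applied to $(\mcal{U},\mcal{V})$ and $(\mcal{V},\mcal{W})$ yields triangle equivalences $\mcal{U}\simeq\mcal{D}/\mcal{V}\simeq\mcal{W}$; I would define $j_{!}$ (respectively $j_{*}$) as a quasi-inverse of the first (respectively second) composed with the inclusion into $\mcal{D}$, so that $\opn{Im}j_{!}=\mcal{U}$ and $\opn{Im}j_{*}=\mcal{W}$. The left adjoint $i^{*}$ of $i_{*}$ is produced by Proposition~\ref{t-st-first}(1) applied to $(\mcal{U},\mcal{V})$, and the right adjoint $i^{!}$ by the same result applied to $(\mcal{V},\mcal{W})$; these also yield the image-kernel matchings $\opn{Ker}i^{*}=\mcal{U}=\opn{Im}j_{!}$ and $\opn{Ker}i^{!}=\mcal{W}=\opn{Im}j_{*}$. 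The two triangles in the statement are then exactly the defining triangles of Proposition~\ref{t-st-first}(2) for $(\mcal{V},\mcal{W})$ and $(\mcal{U},\mcal{V})$ respectively.

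The main obstacle is establishing the adjunctions $j_{!}\dashv j^{*}$ and $j^{*}\dashv j_{*}$ as \emph{global} statements on $\mcal{D}/\mcal{V}$, rather than only on the equivalent subcategories $\mcal{U},\mcal{W}$. Given $X\in\mcal{D}/\mcal{V}$ and $Y\in\mcal{D}$, I would lift $X$ to $U\in\mcal{U}$ and use the $(\mcal{U},\mcal{V})$-triangle $U_{Y}\to Y\to V_{Y}\to\Sigma U_{Y}$ of $Y$ to produce
\[
\begin{aligned}
\Hom_{\mcal{D}/\mcal{V}}(Q(U),Q(Y)) &\cong \Hom_{\mcal{D}/\mcal{V}}(Q(U),Q(U_{Y})) \\
&\cong \Hom_{\mcal{D}}(U,U_{Y}) \cong \Hom_{\mcal{D}}(U,Y),
\end{aligned}
\]
where the first and last isomorphisms come from the $\Sigma$-stability of $\mcal{V}$ together with $\Hom(\mcal{U},\mcal{V})=0$, and the middle one is the fully faithfulness furnished by Proposition~\ref{t-st-first}(3). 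The adjunction for $j_{*}$ follows dually.
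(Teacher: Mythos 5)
The paper does not prove this proposition; it is quoted from \cite{Mi1} without argument. Your proof is correct and is the standard one: part (1) reduces the Hom-vanishings to $j^{*}i_{*}=0$ via the adjunctions and obtains the decomposition triangles from the (co)units of $(j_{!},j^{*})$ and $(j^{*},j_{*})$ together with the triangle identities, and part (2) assembles the recollement from the two applications of Proposition~\ref{t-st-first}, with the only genuinely non-formal point --- the global adjunctions $j_{!}\dashv j^{*}$ and $j^{*}\dashv j_{*}$ on $\mcal{D}/\mcal{V}$ --- handled correctly by the three-step Hom computation through the $(\mcal{U},\mcal{V})$-triangle of $Y$.
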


Thirdly, we introduce the notion of a triangle of recollements. 
A triangle of recollements results in strong symmetry.  
And we mention that as the name shows, it implies three recollements. 

\begin{defn}\footnote{More generally, $(\mcal{U}_1, \mcal{U}_2, \cdots ,  \mcal{U}_n)$  is called
an $n$-gon of recollements in $\mcal{D}$
if $(\mcal{U}_{i}, \mcal{U}_{i+1})$ is a stable t-structure in $\mcal{D}$ ($1 \leq i \leq n$), where
$\mcal{U}_1=\mcal{U}_{n+1}$ (see \cite{IKM}).}
Let $\mcal{D}$ be a triangulated category, and let $\mcal{U}_1, \mcal{U}_2,  \mcal{U}_3$
be triangulated full subcategories of $\mcal{D}$.
We call $(\mcal{U}_1, \mcal{U}_2, \mcal{U}_3)$ a \emph{triangle of recollements} in $\mcal{D}$
if $(\mcal{U}_1, \mcal{U}_2)$, $(\mcal{U}_2, \mcal{U}_3)$ and $(\mcal{U}_3, \mcal{U}_1)$
are stable t-structures in $\mcal{D}$.
\end{defn}

\begin{prop}\label{added}
Let $\mcal{D}$ be a triangulated category.
Then $(\mcal{U}_1, \mcal{U}_2, \mcal{U}_3)$ is a triangle of recollements in $\mcal{D}$ if and only if there is a recollement: 
\[\xymatrix{
\mcal{U}_k \ar@<-1ex>[r]^{i_{k*}} 
& \mcal{D}
\ar@<-2ex>[l]_{i_{k}^{*}} \ar@<2ex>[l]^{i_{k}^{!}} \ar@<-1ex>[r]^{j_{k}^{*}} 
& \mcal{D}/\mcal{U}_k
\ar@<-2ex>[l]_{j_{k!}} \ar@<2ex>[l]^{j_{k*}}
}\]
such that the essential image $\opn{Im}j_{k!}$ is $\mcal{U}_{k-1}$, and that
the essential image $\opn{Im}j_{k*}$ is $\mcal{U}_{k+1}$ 
for any $k \!\! \mod \! n$.
In this case all the relevant subcategories $\mcal{U}_k$ and 
the quotient categories $\mcal{D}/\mcal{U}_k$are triangle equivalent. 
\end{prop}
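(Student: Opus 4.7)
The plan is to reduce both implications to direct invocations of Propositions~\ref{st-t-1second} and~\ref{t-st-first}, exploiting the cyclic redundancy built into the definition of a triangle of recollements.

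For the forward direction, I fix $k \in \{1,2,3\}$ (with indices read cyclically modulo $3$) and apply Proposition~\ref{st-t-1second}(2) to the two consecutive stable $t$-structures $(\mcal{U}_{k-1}, \mcal{U}_k)$ and $(\mcal{U}_k, \mcal{U}_{k+1})$ supplied by the hypothesis. This immediately produces a recollement whose inner term is $\mcal{U}_k$, whose outer term is the Verdier quotient $\mcal{D}/\mcal{U}_k$, and whose outer images satisfy $\opn{Im} j_{k!} = \mcal{U}_{k-1}$ and $\opn{Im} j_{k*} = \mcal{U}_{k+1}$. Letting $k$ range over $\{1,2,3\}$ produces all three recollements at once.

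For the reverse direction, I take any one of the prescribed recollements, say for $k=2$, and apply Proposition~\ref{st-t-1second}(1) to read off two stable $t$-structures $(\opn{Im}j_{2!}, \opn{Im}i_{2*}) = (\mcal{U}_1, \mcal{U}_2)$ and $(\opn{Im}i_{2*}, \opn{Im}j_{2*}) = (\mcal{U}_2, \mcal{U}_3)$. The missing third pair $(\mcal{U}_3, \mcal{U}_1)$ is then extracted by the same proposition applied to the recollement for $k=3$ (equivalently $k=1$). Combining the three gives the defining data of a triangle of recollements.

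The claimed equivalences follow by chaining Proposition~\ref{t-st-first}(3): each stable $t$-structure $(\mcal{U}, \mcal{V})$ yields $\mcal{U} \simeq \mcal{D}/\mcal{V}$ and $\mcal{V} \simeq \mcal{D}/\mcal{U}$. Fed with the three cyclic pairs $(\mcal{U}_{k-1}, \mcal{U}_k)$, this identifies $\mcal{U}_{k-1} \simeq \mcal{D}/\mcal{U}_k$ and $\mcal{U}_k \simeq \mcal{D}/\mcal{U}_{k-1}$ for every $k$, and composing shows that all six categories $\mcal{U}_1, \mcal{U}_2, \mcal{U}_3, \mcal{D}/\mcal{U}_1, \mcal{D}/\mcal{U}_2, \mcal{D}/\mcal{U}_3$ are mutually triangle equivalent. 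Each step is a direct appeal to an already established result, so I anticipate no genuine obstacle; the only care required is bookkeeping the cyclic indexing so that the identifications of $\opn{Im}j_{k!}$ with $\mcal{U}_{k-1}$ and $\opn{Im}j_{k*}$ with $\mcal{U}_{k+1}$ line up consistently for each $k$.
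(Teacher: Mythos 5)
Your proposal is correct and is exactly the argument the paper intends: the paper states Proposition~\ref{added} without proof as an immediate consequence of Proposition~\ref{st-t-1second} (parts (1) and (2) applied to the cyclically consecutive stable $t$-structures) together with Proposition~\ref{t-st-first}(3) for the chain of equivalences. Your cyclic bookkeeping of $\opn{Im}j_{k!}=\mcal{U}_{k-1}$ and $\opn{Im}j_{k*}=\mcal{U}_{k+1}$ is the right way to line everything up.
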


Finally we study triangulated functors 
that respect a given localization (or a colocalization, a recollement, etc.). 
By virtue of the relationship to stable t-structures, they are nothing but 
those which respect corresponding stable t-structures.

\begin{defn} Let $\mcal{D}_1$ and $\mcal{D}_2$ be triangulated categories and 
let $F:\mathcal{D}_1 \to \mathcal{D}_2$ be a triangulated functor. 
\begin{enumerate}
\item Let $(\mathcal{U}_n , \mathcal{V}_n)$ be a stable t-structures in $\mathcal{D}_n$ $(n=1,2)$. 
We say that \emph{$F$ sends $( \mcal{U}_1 , \mcal{V}_1 )$ to $( \mcal{U}_2 , \mcal{V}_2 )$}
if $F(\mcal{U}_1 )$ is contained in $\mcal{U}_2$ and 
$F(\mcal{V}_1 )$ is in $\mcal{V}_2$. 
\item Let $\theta _n$ be localization sequences in $\mcal{D}_n$ $\quad (n=1,2)$: 
\[\theta _n : \xymatrix{
\mcal{D}'_n \ar@<1ex>[r]^{{i_n}_*} 
& \mcal{D}_n \ar@<1ex>[l]^{{i_n}^!} \ar@<1ex>[r]^{{i_n}^*} 
& \mcal{D}''_n \ar@<1ex>[l]^{{j_n}_* }\\ 
}\] 
We say that \emph{$F$ sends $\theta _1$ to $\theta _2$ }
if there exist triangle functors $F' :\mcal{D}'_1 \to \mcal{D}'_2$ and 
$F'' :\mcal{D}''_1 \to \mcal{D}''_2$ that make all the squares in 
the following diagram commutative up to functorial isomorphism. 

\[\xymatrix{
\mcal{D}'_1 \ar@<1ex>[r]^{{i_1}_*} \ar[d]_{F'}
& \mcal{D}_1 \ar@<1ex>[l]^{{i_{1}}^!} \ar@<1ex>[r]^{{i_1}^*} \ar[d]^{{F}}
& \mcal{D}''_1  \ar@<1ex>[l]^{{j_{1}}_*} \ar[d]^{{F}''}\\
\mcal{D}'_2 \ar@<1ex>[r]^{{i_2}_*} 
& \mcal{D}_2 \ar@<1ex>[l]^{{i_2}^!} \ar@<1ex>[r]^{{i_2}^*} 
& \mcal{D}''_2 \ar@<1ex>[l]^{{j_2}_* }\\ 
}\] 

\item Let $\tau _n$ be a recollement in $\mcal{D}_n$ $(n=1,2)$:  
\[\tau _n : \xymatrix{
\mcal{D}'_n \ar@<-1ex>[r]^{{i_n}_{*}} 
& \mcal{D}_n
\ar@<-2ex>[l]_{{i_n}^{*}} \ar@<2ex>[l]^{{i_n}^{!}} \ar@<-1ex>[r]^{{j_n}^{*}} 
& \mcal{D}''_n 
\ar@<-2ex>[l]_{{j_n}_{!}} \ar@<2ex>[l]^{{j_n}_{*}} 
}\]
We say \emph{$F$ sends $\tau_1$ to $\tau _2$}  
if there exist triangle functors $F' :\mcal{D}'_1 \to \mcal{D}'_2$ and 
$F'' :\mcal{D}''_1 \to \mcal{D}''_2$ 
that make all the squares in the following diagram commutative up to functorial isomorphism.

\[\xymatrix{
\mcal{D}'_1 \ar@<-1ex>[r]^{{i_1}_{*}} \ar[d]_{{F}'}
& \mcal{D}_1
\ar@<-2ex>[l]_{{i_1}^{*}} \ar@<2ex>[l]^{{i_1}^{!}} \ar@<-1ex>[r]^{{j_1}^{*}} \ar[d]^{{F}}
& \mcal{D}''_1 
\ar@<-2ex>[l]_{{j_1}_{!}} \ar@<2ex>[l]^{{j_1}_{*}} \ar[d]^{{F}''}
\\
\mcal{D}'_2 \ar@<-1ex>[r]^{{i_2}_{*}} 
& \mcal{D}_2
\ar@<-2ex>[l]_{{i_2}^{*}} \ar@<2ex>[l]^{{i_2}^{!}} \ar@<-1ex>[r]^{{j_2}^{*}} 
& \mcal{D}''_2 
\ar@<-2ex>[l]_{{j_2}_{!}} \ar@<2ex>[l]^{{j_2}_{*}} 
}\] 

\item  Let $( \mcal{U}_n , \mcal{V}_n , \mathcal{W}_n)$  be 
a triangle of recollements in $\mathcal{D}_n$ $(n=1,2)$. 
We say that \emph{$F$ sends $( \mcal{U}_1 , \mcal{V}_1 , \mcal{W}_1 )$ to 
$( \mcal{U}_2 , \mcal{V}_2 , \mcal{W}_2 )$}
if $F(\mcal{U}_1 )$ is contained in $\mcal{U}_2$, 
$F(\mcal{V}_1 )$ is in $\mcal{V}_2$, and $F(\mcal{W}_1 )$ is in $\mcal{W}_2$.

\end{enumerate}
\end{defn}

Above definition (2) and (3) look reasonable but complicated. 
Although commutativity with inclusion functors automatically implies 
that with quotients and other adjoints. 

\begin{lem}\label{functor of stable t-structure} 
Let $\mcal{D}_1$ and $\mcal{D}_2$ be triangulated categories and 
let $(\mcal{U}_n , \mcal{V} _n )$ be stable t-structures in $\mathcal{D}_n$. 
Let \[\theta _n : 
\xymatrix{ \mcal{U}_n \ar@<1ex>[r]^{{i_{\mcal{U}_n}}_*} 
& \mcal{D}_n \ar@<1ex>[l]^{{i_{\mcal{U}_n}}^!} \ar@<1ex>[r]^{{j_{\mcal{U}_n}}^*} 
& \mcal{D}_n/\mcal{U}_n  \ar@<1ex>[l]^{{j_{\mcal{U}_n}}_*}
}\] be localization sequences with  
the canonical embedding functors ${i_{\mcal{U}_n}}_*$,  
the canonical quotient functors $j_{\mcal{U}_n}^*$. 
Let  $F:\mcal{D}_1\to \mcal{D}_2$ be a triangulated functor. 

Then the following are equivalent. 
\begin{enumerate}
\item 
$F$ sends $( \mcal{U}_1 , \mcal{V}_1 )$ to $( \mcal{U}_2 , \mcal{V}_2 )$. 
\item 
$F$ sends $\theta _1$ to $\theta _2$. 
\end{enumerate} 
If this is the case, the functors 
$F_1: \mcal{U}_1 \to \mcal{U}_2$ and $F_2 : \mcal{D}_1/ \mcal{U}_1 \to \mcal{D}_2/\mcal{U}_2$ 
are uniquely determined by $F$ as 
$F {i_{\mcal{U}_1}}_* = {i_{\mcal{U}_2}}_* F_1 $, $F_2 {j_{\mcal{U}_1}}^* =  {j_{\mcal{U}_2}}^* F$, 
$F_1 {i_{\mcal{U}_1}}^! \simeq  {i_{\mcal{U}_2}}^! F$, and $F {j_{\mcal{U}_1}}_* \simeq  {j_{\mcal{U}_2}}_* F_2$. 
\end{lem}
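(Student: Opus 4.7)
The plan is to first dispose of (2)$\Rightarrow$(1) by direct inspection of the diagrams, and then concentrate on (1)$\Rightarrow$(2) and uniqueness, the nontrivial direction. For (2)$\Rightarrow$(1), the hypothesised commutative squares $F\,{i_{\mcal{U}_1}}_* \simeq {i_{\mcal{U}_2}}_*\, F_1$ and $F\,{j_{\mcal{U}_1}}_* \simeq {j_{\mcal{U}_2}}_*\, F_2$, combined with the identifications $\mcal{U}_n = \opn{Im}({i_{\mcal{U}_n}}_*)$ and $\mcal{V}_n = \opn{Im}({j_{\mcal{U}_n}}_*)$, immediately yield $F(\mcal{U}_1)\subseteq \mcal{U}_2$ and $F(\mcal{V}_1)\subseteq \mcal{V}_2$.

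For (1)$\Rightarrow$(2), I would construct $F_1$ and $F_2$ separately. Since $F(\mcal{U}_1)\subseteq\mcal{U}_2$, the restriction $F_1:=F|_{\mcal{U}_1}$ is a triangulated functor $\mcal{U}_1\to\mcal{U}_2$, and by full faithfulness of ${i_{\mcal{U}_2}}_*$ it is the unique functor satisfying $F\,{i_{\mcal{U}_1}}_* = {i_{\mcal{U}_2}}_*\, F_1$. For $F_2$, observe that the composite ${j_{\mcal{U}_2}}^* F$ annihilates $\mcal{U}_1$, because $F(\mcal{U}_1)\subseteq\mcal{U}_2 = \opn{Ker}({j_{\mcal{U}_2}}^*)$ by Proposition~\ref{t-st-first}(3); the universal property of the Verdier quotient ${j_{\mcal{U}_1}}^*: \mcal{D}_1\to\mcal{D}_1/\mcal{U}_1$ therefore yields a unique triangulated $F_2:\mcal{D}_1/\mcal{U}_1\to\mcal{D}_2/\mcal{U}_2$ with ${j_{\mcal{U}_2}}^* F = F_2\,{j_{\mcal{U}_1}}^*$.

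The remaining compatibilities with the adjoints ${i_{\mcal{U}_n}}^!$ and ${j_{\mcal{U}_n}}_*$ are extracted from the canonical triangle of Proposition~\ref{t-st-first}(2). For any $X\in\mcal{D}_1$ apply $F$ to
\[
{i_{\mcal{U}_1}}_*{i_{\mcal{U}_1}}^! X \to X \to {j_{\mcal{U}_1}}_*{j_{\mcal{U}_1}}^* X \to \Sigma\,{i_{\mcal{U}_1}}_*{i_{\mcal{U}_1}}^! X.
\]
The hypothesis (1) guarantees that the left term lands in $\mcal{U}_2$ and the right term in $\mcal{V}_2$, so the image triangle is the stable $t$-structure decomposition of $FX$ with respect to $(\mcal{U}_2,\mcal{V}_2)$. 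By the uniqueness (up to canonical isomorphism) of such a decomposition, one obtains natural isomorphisms $F\,{i_{\mcal{U}_1}}_*{i_{\mcal{U}_1}}^!X \simeq {i_{\mcal{U}_2}}_*{i_{\mcal{U}_2}}^!FX$ and $F\,{j_{\mcal{U}_1}}_*{j_{\mcal{U}_1}}^*X \simeq {j_{\mcal{U}_2}}_*{j_{\mcal{U}_2}}^*FX$. Stripping off the fully faithful ${i_{\mcal{U}_2}}_*$ gives $F_1\,{i_{\mcal{U}_1}}^! \simeq {i_{\mcal{U}_2}}^!\,F$; using ${j_{\mcal{U}_2}}^* F = F_2\,{j_{\mcal{U}_1}}^*$ on the right-hand side of the second isomorphism rewrites it as $F\,{j_{\mcal{U}_1}}_*\,{j_{\mcal{U}_1}}^*X \simeq {j_{\mcal{U}_2}}_*\,F_2\,{j_{\mcal{U}_1}}^*X$, and since ${j_{\mcal{U}_1}}^*$ is essentially surjective this amounts to $F\,{j_{\mcal{U}_1}}_* \simeq {j_{\mcal{U}_2}}_*\,F_2$.

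The main subtlety is the step where pointwise isomorphisms are promoted to natural isomorphisms of functors: the naturality in $X$ for the canonical triangle decomposition is built into the fact that its three vertices are the values of the functors ${i_{\mcal{U}_n}}_*{i_{\mcal{U}_n}}^!$, $\bsym{1}$ and ${j_{\mcal{U}_n}}_*{j_{\mcal{U}_n}}^*$. The uniqueness assertions on $F_1$ and $F_2$ then follow from the fact that ${i_{\mcal{U}_2}}_*$ is fully faithful and ${j_{\mcal{U}_1}}^*$ is essentially surjective, so both intertwining equations admit at most one solution.
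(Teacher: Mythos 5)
Your proof is correct, and it reaches the adjoint compatibilities by a different mechanism than the paper. The paper first induces $\hat F$ on $\mcal{D}_1/\mcal{U}_1$ and $\tilde F$ on $\mcal{D}_1/\mcal{V}_1$ by the universal property of the Verdier quotient, then \emph{defines} $F_1$ by transporting $\tilde F$ through the equivalences $\mcal{D}_n/\mcal{V}_n\simeq\mcal{U}_n$ induced by $i_{\mcal{U}_n}^!$ (so that $F_1\,i_{\mcal{U}_1}^!\simeq i_{\mcal{U}_2}^!F$ holds by construction), and finally checks $F_1=F\vert_{\mcal{U}_1}$ by the computation $F_1=F_1 i_{\mcal{U}_1}^!{i_{\mcal{U}_1}}_*=i_{\mcal{U}_2}^!F{i_{\mcal{U}_1}}_*=F\vert_{\mcal{U}_1}$; the square with ${j_{\mcal{U}_1}}_*$ is handled symmetrically via $\mcal{D}_n/\mcal{U}_n\simeq\mcal{V}_n$. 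You proceed in the opposite order: you take $F_1=F\vert_{\mcal{U}_1}$ and $F_2=\hat F$ as the definitions (which makes the squares with ${i_{\mcal{U}_n}}_*$ and ${j_{\mcal{U}_n}}^*$, and the uniqueness claims, immediate) and then derive the two remaining squares from the uniqueness of the decomposition triangle of $FX$ with respect to $(\mcal{U}_2,\mcal{V}_2)$. Both routes rest on the same underlying fact (Proposition \ref{t-st-first}), but yours isolates the only real content — that applying $F$ to the triangle $i_*i^!X\to X\to j_*j^*X\to\Sigma i_*i^!X$ again yields a decomposition triangle — and gets both remaining squares from it at once, whereas the paper's route packages that content into the equivalences $e,e',g,g'$. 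One point you rightly flag but treat tersely: upgrading the objectwise isomorphisms to natural ones requires the uniqueness of the fill-in between decomposition triangles (which holds because $\Hom_{\mcal{D}_2}(\mcal{U}_2,\Sigma^{-1}\mcal{V}_2)=0$), and the passage from $F{j_{\mcal{U}_1}}_*{j_{\mcal{U}_1}}^*\simeq{j_{\mcal{U}_2}}_*F_2{j_{\mcal{U}_1}}^*$ to $F{j_{\mcal{U}_1}}_*\simeq{j_{\mcal{U}_2}}_*F_2$ is cleanest via the counit isomorphism ${j_{\mcal{U}_1}}^*{j_{\mcal{U}_1}}_*\simeq\bsym{1}$ rather than bare essential surjectivity; these are routine and do not affect the validity of the argument.
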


\begin{proof} 
(2) $\Rightarrow$ (1) is obvious. 

(1) $\Rightarrow$ (2). 
There uniquely exist functors 
$\hat{F}: \mcal{D}_1/\mcal{U}_1 \to \mcal{D}_2/\mcal{U}_2$, 
$\tilde{F}: \mcal{D}_1/\mcal{V}_1 \to \mcal{D}_2/\mcal{V}_2$ 
that make the following diagram commutative : 
\[\xymatrix{
\mcal{U}_1 \ar@<1ex>[r]^{{i_{\mcal{U}_1}}_*} \ar[d]_{{F}\vert_{\mcal{U}_1}}
& \mcal{D}_1 \ar@<1ex>[r]^{{j_{\mcal{U}_1}}^*} \ar[d]^{{F}}
& \mcal{D}_1/\mcal{U}_1  \ar[d]^{\hat{F}}\\
\mcal{U}_2 \ar@<1ex>[r]^{{i_{\mcal{U}_2}}_*} 
& \mcal{D}_2 \ar@<1ex>[r]^{{j_{\mcal{U}_2}}^*} 
& \mcal{D}_2/\mcal{U}_2  
}\] 
\[\xymatrix{
\mcal{V}_1 \ar@<1ex>[r]^{{i_{\mcal{V}_1}}_*} \ar[d]_{{F}\vert_{\mcal{V}_1}}
& \mcal{D}_1 \ar@<1ex>[r]^{{j_{\mcal{V}_1}}^*} \ar[d]^{{F}}
& \mcal{D}_1/\mcal{V}_1  \ar[d]^{\tilde{F}}\\
\mcal{V}_2 \ar@<1ex>[r]^{{i_{\mcal{V}_2}}_*} 
& \mcal{D}_2 \ar@<1ex>[r]^{{j_{\mcal{V}_2}}^*} 
& \mcal{D}_2/\mcal{V}_2  
}\] where ${i_{\mcal{V}_1}}_*$, ${i_{\mcal{V}_2}}_*$ are the canonical embedding functors and
$j_{\mcal{V}}^*$, $j_{\mcal{V}_2}^*$ the canonical quotient functors. 
On the other hand, a right adjoint $i_{\mcal{U}_1}^!$ ($i_{\mcal{U}_2}^!$ ) of 
${i_{\mcal{U}_1}}_*$ (${i_{\mcal{U}_2}}_*$) induces an equivalence 
$e: \mcal{D}_1/\mcal{V}_1 \to \mcal{U}_1$ ($e' : \mcal{D}_2/\mcal{V}_2 \to \mcal{U}_2$) 
such that $i_{\mcal{U}_1}^! = e j_{\mcal{V}_1}^*$ ($i_{\mcal{U}_2}^! = e j_{\mcal{V}_2}^*$). 
Define the functor $F_1 :\mcal{U}_1 \to \mcal{U}_2$ as 
$e' \hat{F} = F_1 e$. Then $F_1 i_{\mcal{U}_1}^! = i_{\mcal{U}_2}^! F$. 
And $F_1 = F_1 i_{\mcal{U}_1}^! {i_{\mcal{U}_1}}_* = i_{\mcal{U}_2}^! F {i_{\mcal{U}_1}}_* 
=i_{\mcal{U}_2}^! {i_{\mcal{U}_2}}_* F\vert _{\mcal{U}_1} =F\vert _{\mcal{U}_1}$. 
Similarly, using equivalences 
$g:\mcal{D}_1/\mcal{U}_1 \to \mcal{V}_1$ and $g' : \mcal{D}_2/\mcal{U}_2 \to \mcal{V}_2$ induced by 
left adjoints $i_{\mcal{V}_1}^*$, $i_{\mcal{V}_2}^*$ of 
${i_{\mcal{V}_1}}_*$, ${i_{\mcal{V}_2}}_*$ respectively, we get a functor 
$F_2 : \mcal{D}_1/\mcal{U}_1 \to \mcal{D}_2/\mcal{U}_2$ as 
$F {j_{\mcal{U}_1}}_* = {j_{\mcal{U}_2}}_* F_2$. 
The functors $F_2$ and $\tilde{F}$ coincide as well. 
\end{proof}

We have the following immediate consequence of Lemma \ref{functor of stable t-structure}.

\begin{cor}\label{p7Apr30} 
Let $\mcal{D}_1$ and $\mcal{D}_2$ be triangulated categories and 
$( \mcal{U}_n , \mcal{V}_n )$, $( \mcal{V}_n , \mcal{W}_n )$ be 
stable t-structures in $\mcal{D}_n$ ($n=1,2$). 
The following are equivalent for a triangle functor 
$F:\mcal{D}_{1} \to \mcal{D}_{2}$:
\begin{enumerate}
\item $F$ sends $( \mcal{U}_1 , \mcal{V}_1 )$ to $( \mcal{U}_2 , \mcal{V}_2 )$ and 
$( \mcal{V}_1 , \mcal{W}_1 )$ to $( \mcal{V}_2 , \mcal{W}_2 )$. 
\item $F$ sends $\tau _1$ to $\tau _2$ where $\tau _n$ is a recollement 
\[\xymatrix{
\mcal{V}_n \ar@<-1ex>[r]^{{i_n}_{*}} 
& \mcal{D}_2
\ar@<-2ex>[l]_{{i_n}^{*}} \ar@<2ex>[l]^{{i_n}^{!}} \ar@<-1ex>[r]^{{j_n}^{*}} 
& \mcal{D}_2 /\mcal{V}_2
\ar@<-2ex>[l]_{{j_n}_{!}} \ar@<2ex>[l]^{{j_n}_{*}} 
}\] 
with a canonical embedding functor ${i_n}_*$ and a canonical quotient functor ${j_n}^*$ ($n=1,2$). 
\end{enumerate}
If this is the case, triangle functors $F': \mcal{V}_1 \to \mcal{V}_2$ and 
$F'': \mcal{D}_1/ \mcal{V}_1 \to \mcal{D}_2/\mcal{V}_2$ are uniquely determined as 
$F {j_1}_! \simeq  {j_2}_! F'' $, $F' {i_1}^* \simeq  {i_2}^* F$, 
$F'' {j_1}^* = {j_2}^* F$, $F {i_1}_* = {i_2}_* F'$, 
$F {j_1}_* \simeq  {j_2}_* F'' $, and $F' {i_1}^! \simeq  {i_2}^! F$. 
\[\xymatrix{
\mcal{V}_1 \ar@<-1ex>[r]^{{i_1}_{*}} \ar[d]_{{F}'}
& \mcal{D}_1
\ar@<-2ex>[l]_{{i_1}^{*}} \ar@<2ex>[l]^{{i_1}^{!}} \ar@<-1ex>[r]^{{j_1}^{*}} \ar[d]^{{F}}
& \mcal{D}_1 /\mcal{V}_1
\ar@<-2ex>[l]_{{j_1}_{!}} \ar@<2ex>[l]^{{j_1}_{*}} \ar[d]^{{F}''}
\\
\mcal{V}_2 \ar@<-1ex>[r]^{{i_2}_{*}} 
& \mcal{D}_2
\ar@<-2ex>[l]_{{i_2}^{*}} \ar@<2ex>[l]^{{i_2}^{!}} \ar@<-1ex>[r]^{{j_2}^{*}} 
& \mcal{D}_2 /\mcal{V}_2
\ar@<-2ex>[l]_{{j_2}_{!}} \ar@<2ex>[l]^{{j_2}_{*}} 
}\]
\end{cor}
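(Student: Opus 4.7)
The plan is to reduce everything to two applications of Lemma~\ref{functor of stable t-structure}, one for each of the stable t-structures $(\mcal{U}_n,\mcal{V}_n)$ and $(\mcal{V}_n,\mcal{W}_n)$, and then to repackage the resulting data as the six commutativity squares of a recollement via Proposition~\ref{st-t-1second}.

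\textbf{(2) $\Rightarrow$ (1):} This is the easy direction. Given $F'$ and $F''$ witnessing that $F$ sends $\tau_1$ to $\tau_2$, the equality $F\,i_{1*} = i_{2*}F'$ forces $F(\mcal{V}_1)\subseteq\mcal{V}_2$ since $i_{n*}$ is the canonical embedding of $\mcal{V}_n$. Because $\opn{Im}j_{n!}=\mcal{U}_n$ and $\opn{Im}j_{n*}=\mcal{W}_n$ by Proposition~\ref{st-t-1second}, the natural isomorphisms $F\,j_{1!}\simeq j_{2!}F''$ and $F\,j_{1*}\simeq j_{2*}F''$ yield $F(\mcal{U}_1)\subseteq\mcal{U}_2$ and $F(\mcal{W}_1)\subseteq\mcal{W}_2$ respectively.

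\textbf{(1) $\Rightarrow$ (2):} Here I apply Lemma~\ref{functor of stable t-structure} twice. First, the stable t-structure $(\mcal{U}_n,\mcal{V}_n)$ determines a localization sequence whose four functors can be identified, using the equivalences $\mcal{D}_n/\mcal{U}_n\simeq\mcal{V}_n$ and $\mcal{D}_n/\mcal{V}_n\simeq\mcal{U}_n$ of Proposition~\ref{t-st-first}(3) combined with the description in Proposition~\ref{st-t-1second}, with the recollement data $j_{n!}$ (embedding of $\mcal{U}_n$), $j_n^*$ (its right adjoint), $i_n^*$ (left adjoint of the embedding of $\mcal{V}_n$), and $i_{n*}$ (the embedding of $\mcal{V}_n$). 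Since by assumption $F$ sends $(\mcal{U}_1,\mcal{V}_1)$ to $(\mcal{U}_2,\mcal{V}_2)$, the Lemma produces restrictions $F|_{\mcal{U}_1}$ and $F|_{\mcal{V}_1}$ together with the squares $F\,j_{1!}\simeq j_{2!}F|_{\mcal{U}_1}$, $F|_{\mcal{U}_1}\,j_1^*=j_2^*F$, $F|_{\mcal{V}_1}\,i_1^*\simeq i_2^*F$, and $F\,i_{1*}=i_{2*}F|_{\mcal{V}_1}$.

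The second application is to $(\mcal{V}_n,\mcal{W}_n)$: its localization sequence realizes $i_{n*}$ as the canonical embedding of $\mcal{V}_n$, $i_n^!$ as its right adjoint, $j_n^*$ as the canonical quotient onto $\mcal{D}_n/\mcal{V}_n$, and $j_{n*}$ as its right adjoint (with image $\mcal{W}_n$). Since $F$ also sends $(\mcal{V}_1,\mcal{W}_1)$ to $(\mcal{V}_2,\mcal{W}_2)$, the Lemma yields a functor $F''\colon\mcal{D}_1/\mcal{V}_1\to\mcal{D}_2/\mcal{V}_2$ together with $F\,i_{1*}=i_{2*}F|_{\mcal{V}_1}$, $F''\,j_1^*=j_2^*F$, $F|_{\mcal{V}_1}\,i_1^!\simeq i_2^!F$, and $F\,j_{1*}\simeq j_{2*}F''$. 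Setting $F':=F|_{\mcal{V}_1}$ and combining the two outputs gives all six commutative squares required for $F$ to send $\tau_1$ to $\tau_2$. Uniqueness of $F'$ and $F''$, together with the explicit formulas in the statement, then follows immediately from the uniqueness clause of Lemma~\ref{functor of stable t-structure} applied to each of the two stable t-structures.

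The only bookkeeping subtlety I foresee is verifying that the $F''$ produced by the first application (via the identification $\mcal{D}_n/\mcal{V}_n\simeq\mcal{U}_n$ and $F|_{\mcal{U}_1}$) coincides with the genuine quotient functor produced by the second; but both are uniquely characterized by $F''\,j_1^*=j_2^*F$, so they agree up to canonical natural isomorphism. Since the statement is ultimately a packaging of Lemma~\ref{functor of stable t-structure}, I do not expect any substantive obstacle beyond this identification.
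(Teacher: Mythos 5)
Your proposal is correct and follows essentially the same route as the paper, which offers no written proof and simply records the corollary as an ``immediate consequence'' of Lemma~\ref{functor of stable t-structure}: applying that lemma once to $(\mcal{U}_n,\mcal{V}_n)$ and once to $(\mcal{V}_n,\mcal{W}_n)$, then matching the two induced functors on $\mcal{D}_1/\mcal{V}_1$ via their common characterization $F''j_1^*=j_2^*F$ and transporting the remaining squares through the canonical equivalences $\mcal{D}_n/\mcal{V}_n\simeq\mcal{U}_n$ and $\mcal{D}_n/\mcal{U}_n\simeq\mcal{V}_n$ of Proposition~\ref{t-st-first}, is exactly the intended argument. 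The bookkeeping subtlety you flag is real but is resolved exactly as you say.
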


In the rest of this section we shall give a useful criterion to show that a triangle functor is an equivalence
by looking at the restriction to subcategories.
We need the following preparation.

\begin{lem}\label{17-1Apr30}
If a triangle functor $F:\mcal{D}_{1} \to \mcal{D}_{2}$ 
sends a stable t-structure $( \mcal{U}_1 , \mcal{V}_1 )$ in $\mcal{D}_1$ 
to a stable t-structure $( \mcal{U}_2 , \mcal{V}_2 )$ in $\mcal{D}_2$. 
Then we have the following: 
\begin{enumerate}
\item If $F\mid _{\mcal{U}_1 }$ is full (resp., faithful), then 
$\Hom _{\mcal{D}_1}( U, X) \to \Hom _{\mcal{D}_2}( FU, FX)$ is 
surjective (resp., injective) for 
$U\in  {\mcal{U}_1}$ and $X\in  {\mcal{D}_1}$. 
\item If $F\mid _{\mcal{V}_1 }$ is full (resp., faithful), then 
$\Hom _{\mcal{D}_1}( X, V) \to \Hom _{\mcal{D}_2}( FX, FV)$ is 
surjective (resp., injective) for 
$X\in  {\mcal{D}_1}$ and $V\in  {\mcal{V}_1}$. 
\item If $F$ is full and $F\mid _{\mcal{U}_1}$ and $F\mid_{\mcal{V}_1}$ are dense, 
then $F$ is dense. 
\end{enumerate}
\end{lem}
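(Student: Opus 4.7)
The plan is to treat (1) and (2) together by the same principle: use the stable t-structure decomposition to reduce each $\Hom$ computation to one internal to $\mcal{U}_1$ or $\mcal{V}_1$, where the hypothesis on $F$ directly applies. For (1), given $U \in \mcal{U}_1$ and $X \in \mcal{D}_1$, axiom (c) provides a triangle $U' \arr X \arr V' \arr \Sigma U'$ with $U' \in \mcal{U}_1$ and $V' \in \mcal{V}_1$. Applying $\Hom_{\mcal{D}_1}(U,-)$ and using orthogonality $\Hom_{\mcal{D}_1}(\mcal{U}_1,\mcal{V}_1) = 0$ (together with axiom (a) to kill $\Hom_{\mcal{D}_1}(U,\Sigma^{-1}V')$ as well) yields an isomorphism $\Hom_{\mcal{D}_1}(U, U') \iso \Hom_{\mcal{D}_1}(U, X)$. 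Since $F$ sends $(\mcal{U}_1, \mcal{V}_1)$ to $(\mcal{U}_2, \mcal{V}_2)$ and is a triangle functor, the image triangle decomposes $FX$ in the same fashion, so the identical argument gives $\Hom_{\mcal{D}_2}(FU, FU') \iso \Hom_{\mcal{D}_2}(FU, FX)$. Naturality of these isomorphisms fits them into a commutative square with the comparison map $\Hom_{\mcal{D}_1}(U,-) \arr \Hom_{\mcal{D}_2}(FU, F(-))$, so the hypothesis that this map is surjective (resp.\ injective) at $U'$ transfers directly to $X$. Assertion (2) is proved by the dual argument, applying $\Hom_{\mcal{D}_1}(-,V)$ to the same type of triangle.

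For (3), I would let $Y \in \mcal{D}_2$ be arbitrary and decompose it via axiom (c) as a triangle
\[ U_2 \arr Y \arr V_2 \xarr{h} \Sigma U_2 \]
with $U_2 \in \mcal{U}_2$ and $V_2 \in \mcal{V}_2$. Density of $F|_{\mcal{U}_1}$ and $F|_{\mcal{V}_1}$ supplies objects $U_1 \in \mcal{U}_1$, $V_1 \in \mcal{V}_1$ together with isomorphisms $\alpha : FU_1 \iso U_2$ and $\beta : FV_1 \iso V_2$. Transport $h$ along these isomorphisms (and the canonical isomorphism $F\Sigma \cong \Sigma F$) to obtain a morphism $FV_1 \arr \Sigma FU_1$, and then lift it by fullness of $F$ to a morphism $g: V_1 \arr \Sigma U_1$ in $\mcal{D}_1$. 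Complete $g$ to a triangle $U_1 \arr X \arr V_1 \xarr{g} \Sigma U_1$; applying the triangle functor $F$ produces a triangle in $\mcal{D}_2$ whose two outer terms are identified, via $\alpha$ and $\beta$, with those of the triangle for $Y$, with commuting boundary morphism by construction. The axioms of a triangulated category then produce a morphism between the two triangles whose outer components are isomorphisms, forcing the middle one $FX \arr Y$ to be an isomorphism as well.

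The main technical care lies in (3), namely in choosing $g$ so that the square relating $Fg$ to $h$ actually commutes on the nose rather than only up to an unspecified discrepancy, and in invoking the triangulated-category axiom that upgrades a compatible pair of outer isomorphisms to an isomorphism of triangles. Parts (1) and (2) are essentially a formal application of orthogonality plus the long exact sequence of a triangle, so no real obstacle is expected there.
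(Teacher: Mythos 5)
Your proposal is correct and follows essentially the same route as the paper: parts (1)/(2) reduce to the outer term of the decomposition triangle via orthogonality and the long exact Hom sequence (the paper writes out (2) and dualizes, you write out (1) and dualize), and part (3) lifts the connecting morphism of the decomposition of $Y$ through $F$ by fullness and density, then completes to a triangle and compares. No gaps.
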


\begin{proof}
(1) is a dual of (2).

(2) A given $X\in \mcal{D}_1$ has triangles  
\[ U_X \to X \to V^X \to \Sigma U_X \] 
\[ FU_X \to FX \to FV^X \to \Sigma FU_X \] 
where $U_X \in \mcal{U}_1$ and $V^X \in \mcal{V}_1$. 
These induce $\Hom_{\mcal{D}_1} (X,V) \simeq \Hom_{\mcal{D}_1} (V^X,V)$ 
and $\Hom_{\mcal{D}_2 }(FX,FV) \simeq \Hom_{\mcal{D}_2} (FV^X,FV)$ for $V\in \mcal{V}_1$. 
Hence surjectivity (resp. injectivity) of $\Hom_{\mcal{D}_1} (V^X,V) \to \Hom_{\mcal{D}_2} (FV^X,FV)$ 
implies that of $\Hom_{\mcal{D}_1} (X,V) \to \Hom_{\mcal{D}_2} (FX,FV)$.

(3) Let $X'$ be an object of $\mcal{D}_2$. 
There is a triangle 
\[ U'_{X'} \to X' \to {V'}^{X'} \stackrel{f}{\to} \Sigma {U'}_{X'} \] 
with $ U'_{X'} \in \mcal{U}_2$ and $ {V'}^{X'} \in \mcal{V}_2$. 
There exist $V \in \mcal{V}_1$, $U\in \mcal{U}_1$ such that 
$FV\simeq V'$, $FU\simeq U'$. And there exists $g\in \Hom_{\mcal{D}_1} (V, \Sigma U )$ 
such that we have a morphism between triangles 
\[\begin{CD}
FU @>>> FX @>>> FV @>Fg>> \Sigma FU \\
@VV\wr V @VVV @VV\wr V @VV\wr V \\
U' @>>> X' @>>>  V' @>f>> \Sigma U'
\end{CD}\] 
We have $FX \simeq X'$. 
\end{proof}

We have the following desired criterion.

\begin{cor}\label{p17-1Apr30} 
Let $\mcal{D}_1$, $\mcal{D}_2$ be triangulated categories. 
Let $( \mcal{U}_n , \mcal{V}_n )$ and $( \mcal{V}_n , \mcal{W}_n )$ 
be stable t-structures in $\mcal{D}_n$~$(n=1,2)$. 
Assume a triangle functor $F:\mcal{D}_1 \to \mcal{D}_2$ 
sends $( \mcal{U}_1 , \mcal{V}_1 )$ and  $( \mcal{V}_1 , \mcal{W}_1 )$ to 
$( \mcal{U}_2 , \mcal{V}_2 )$ and $( \mcal{V}_2 , \mcal{W}_2 )$ respectively. 
If $F \mid_{\mcal{U}_1}$ and $F \mid _{\mcal{V}_1}$ are fully faithful (resp., equivalent), so is $F$. 
\end{cor}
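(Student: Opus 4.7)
The plan is to apply Lemma~\ref{17-1Apr30} to both stable t-structures, using the crucial fact that $\mcal{V}_1$ appears in both and carries the key full-faithfulness assumption. First I would treat full faithfulness. Given $X, Y \in \mcal{D}_1$, take the canonical triangle
\[ U_X \to X \to V^X \to \Sigma U_X \]
with $U_X \in \mcal{U}_1$ and $V^X \in \mcal{V}_1$ coming from the stable t-structure $(\mcal{U}_1,\mcal{V}_1)$, and note that $F$ sends it to a triangle of the same shape in $\mcal{D}_2$.

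Next, apply $\Hom_{\mcal{D}_1}(-,Y)$ and $\Hom_{\mcal{D}_2}(F(-),FY)$ to obtain two long exact sequences linked by the natural transformation induced by $F$. Bijectivity of $\Hom_{\mcal{D}_1}(X,Y) \to \Hom_{\mcal{D}_2}(FX,FY)$ then follows from the five-lemma once we know bijectivity of the maps $\Hom_{\mcal{D}_1}(\Sigma^k U_X, Y) \to \Hom_{\mcal{D}_2}(\Sigma^k FU_X, FY)$ and $\Hom_{\mcal{D}_1}(\Sigma^k V^X, Y) \to \Hom_{\mcal{D}_2}(\Sigma^k FV^X, FY)$ for $k \in \{0,1\}$. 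Since $\mcal{U}_1 = \Sigma \mcal{U}_1$ and $\mcal{V}_1 = \Sigma \mcal{V}_1$, Lemma~\ref{17-1Apr30}(1) applied to $(\mcal{U}_1,\mcal{V}_1)$ with the fully faithful restriction $F|_{\mcal{U}_1}$ handles the first family of maps. The second family is the subtle point: $V^X$ lies in $\mcal{V}_1$, so I invoke Lemma~\ref{17-1Apr30}(1) once more, but now applied to the stable t-structure $(\mcal{V}_1,\mcal{W}_1)$, in which $\mcal{V}_1$ occupies the ``left'' slot; the hypothesis that $F$ sends $(\mcal{V}_1,\mcal{W}_1)$ to $(\mcal{V}_2,\mcal{W}_2)$, combined with the fully faithful $F|_{\mcal{V}_1}$, yields the required bijection.

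For the equivalence case, $F$ is already fully faithful by the argument above, and the restrictions $F|_{\mcal{U}_1}$ and $F|_{\mcal{V}_1}$ are dense because they are equivalences. Applying Lemma~\ref{17-1Apr30}(3) to the stable t-structure $(\mcal{U}_1,\mcal{V}_1)$ then forces $F$ itself to be dense, so $F$ is an equivalence.

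The main obstacle, such as it is, lies in recognizing that the second t-structure $(\mcal{V}_1,\mcal{W}_1)$ must be brought to bear to control the component involving $V^X \in \mcal{V}_1$: Lemma~\ref{17-1Apr30}(2) applied to $(\mcal{U}_1,\mcal{V}_1)$ only gives information about $\Hom(X,V)$ with $V$ in the second variable, not $\Hom(V^X,Y)$, so switching to the t-structure $(\mcal{V}_1,\mcal{W}_1)$ is essential. Correspondingly, no hypothesis on $F|_{\mcal{W}_1}$ is needed, since the triangle decomposition driving the five-lemma argument is taken solely with respect to $(\mcal{U}_1,\mcal{V}_1)$.
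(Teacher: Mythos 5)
Your proposal is correct and follows essentially the same route as the paper: decompose $X$ via the triangle for $(\mcal{U}_1,\mcal{V}_1)$, apply $\Hom(-,Y)$, and conclude by the five-lemma, with density handled by Lemma~\ref{17-1Apr30}(3). You are in fact slightly more explicit than the paper in noting that the bijections for the $V^X$-terms come from applying Lemma~\ref{17-1Apr30}(1) to the \emph{second} stable t-structure $(\mcal{V}_1,\mcal{W}_1)$, where $\mcal{V}_1$ sits in the left slot --- the paper uses this silently, and your identification of it as the key point is accurate.
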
 

\begin{proof}
For given $X,Y \in \mcal{D}_1$, there is a triangle
\[ U_X \to X \to V^X \to \Sigma U_X \] 
where $U_X\in \mcal{U}_1$ and $V^X\in \mcal{V}_1$. 
These triangles induce a diagram of abelian groups with exact rows and columns 
{\tiny\[  \begin{array}{ccccccccc}
{}_{\mcal{D}_1}(\Sigma U_X , Y)\to&{}_{\mcal{D}_1}(V^X , Y)\to&{}_{\mcal{D}_1}(X , Y)\to&{}_{\mcal{D}_1}(U_X , Y)\to&{}_{\mcal{D}_1}(\Sigma^{-1}V^X , Y)\\
\downarrow&\downarrow&\downarrow&\downarrow&\downarrow\\
{}_{\mcal{D}_2}(F\Sigma U_X , FY)\to&{}_{\mcal{D}_2}(FV^X , FY)\to&{}_{\mcal{D}_2}(FX , FY)\to&{}_{\mcal{D}_2}(FU_X , FY)\to&{}_{\mcal{D}_2}(F\Sigma^{-1}V^X , FY)
\end{array} \]} 
where ${}_{\mcal{D}_n}(~ ,~)$ denote $\Hom_{\mcal{D}_n}(~ ,~)$. 
Since $\Hom_{\mcal{D}_1}(U , Y)\to \Hom_{\mcal{D}_2}(FU , FY)$ and
$\Hom_{\mcal{D}_1}(V , Y)\to \Hom_{\mcal{D}_2}(FV , FY)$ are 
bijective for any $U\in\mcal{U}_1$ and $V\in\mcal{V}_1$ by Lemma~\ref{17-1Apr30}(1), 
so is $\Hom_{\mcal{D}_1}(X , Y) \to \Hom_{\mcal{D}_2}(FX , FY)$ 
from five-lemma. 

In the case that $F\mid_{\mcal{U}_1}$ and $F\mid _{\mcal{V}_1}$ are equivalences, $F$ is dense by Lemma~\ref{17-1Apr30}(3).  
\end{proof}

As an immediate consequence, we have the following useful result, which will be used in the later section.

\begin{prop}\label{p20Apr30} 
Let $\mcal{D}_n$ be a triangulated category, and let $( \mcal{U}_n , \mcal{V}_n , \mcal{W}_n )$ 
be a triangle of recollements in $\mcal{D}_n$ $(n=1,2)$.
Assume a triangle functor $F:\mcal{D}_{1} \to \mcal{D}_{2}$ 
sends a triangle of recollements $(\mcal{U}_1, \mcal{V}_1,\mcal{W}_1 )$ to 
a triangle of recollements $(\mcal{U}_2,\mcal{V}_2,\\
\mcal{W}_2 )$. 
If $F \mid _{\mcal{U}_1}$ is fully faithful (resp., equivalent), so is $F:\mcal{D}_1\to\mcal{D}_2$.
\end{prop}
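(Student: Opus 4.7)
The plan is to reduce to Corollary~\ref{p17-1Apr30}, which says that a triangle functor respecting two consecutive stable t-structures is fully faithful (resp., an equivalence) as soon as its restrictions to the two ``end'' subcategories are. Applied to $(\mcal{U}_1,\mcal{V}_1)$ and $(\mcal{V}_1,\mcal{W}_1)$, the conclusion for $F$ would follow from fully faithfulness (resp., equivalence) of $F|_{\mcal{U}_1}$ and $F|_{\mcal{V}_1}$; the former is the given hypothesis, so the whole task is to establish it for $F|_{\mcal{V}_1}$. The key is to exploit the third stable t-structure $(\mcal{W}_1,\mcal{U}_1)$ as a bridge.

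For any stable t-structure $(\mcal{X},\mcal{Y})$ in a triangulated category $\mcal{D}$, Proposition~\ref{t-st-first}(3) identifies $\mcal{X}$ with the quotient $\mcal{D}/\mcal{Y}$ via the composite of the inclusion $\mcal{X}\hookrightarrow\mcal{D}$ with the canonical projection $\mcal{D}\to\mcal{D}/\mcal{Y}$. Applying this observation separately to $(\mcal{W}_n,\mcal{U}_n)$ and to $(\mcal{V}_n,\mcal{W}_n)$, I obtain two triangle equivalences $\varphi_n:\mcal{U}_n\iso\mcal{D}_n/\mcal{W}_n$ and $\psi_n:\mcal{V}_n\iso\mcal{D}_n/\mcal{W}_n$ for $n=1,2$, each realized as an inclusion followed by the canonical quotient onto $\mcal{D}_n/\mcal{W}_n$.

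Since $F$ respects the stable t-structure $(\mcal{W}_1,\mcal{U}_1)$, Lemma~\ref{functor of stable t-structure} yields a unique induced triangle functor $\bar F:\mcal{D}_1/\mcal{W}_1\to\mcal{D}_2/\mcal{W}_2$ compatible with $F$ and with the quotient functors. Combining this compatibility with the descriptions of $\varphi_n$ and $\psi_n$ produces two squares
\[\xymatrix@C=2em{
\mcal{U}_1 \ar[r]^{\varphi_1} \ar[d]_{F|_{\mcal{U}_1}} & \mcal{D}_1/\mcal{W}_1 \ar[d]^{\bar F} & & \mcal{V}_1 \ar[r]^{\psi_1} \ar[d]_{F|_{\mcal{V}_1}} & \mcal{D}_1/\mcal{W}_1 \ar[d]^{\bar F} \\
\mcal{U}_2 \ar[r]^{\varphi_2} & \mcal{D}_2/\mcal{W}_2 & & \mcal{V}_2 \ar[r]^{\psi_2} & \mcal{D}_2/\mcal{W}_2
}\]
which commute up to functorial isomorphism and whose horizontal arrows are all equivalences. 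The left square then transports fully faithfulness (resp., equivalence) from $F|_{\mcal{U}_1}$ to $\bar F$, and the right square transports the same property from $\bar F$ to $F|_{\mcal{V}_1}$.

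With $F|_{\mcal{U}_1}$ and $F|_{\mcal{V}_1}$ both in hand, Corollary~\ref{p17-1Apr30} applied to the stable t-structures $(\mcal{U}_1,\mcal{V}_1)$ and $(\mcal{V}_1,\mcal{W}_1)$ finishes the argument. I expect the only subtle point to be checking that the $\bar F$ produced by Lemma~\ref{functor of stable t-structure} from $(\mcal{W}_1,\mcal{U}_1)$ is simultaneously compatible with both the $\mcal{U}$-side square and the $\mcal{V}$-side square, but this is immediate from the uniqueness clause of that lemma together with the fact that both commutativities arise from $F$ followed by the single quotient $\mcal{D}_n\to\mcal{D}_n/\mcal{W}_n$.
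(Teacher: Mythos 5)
Your proposal is correct and follows essentially the same route as the paper: both arguments use the rotational symmetry of the triangle of recollements to transport fully faithfulness (resp.\ equivalence) from $F|_{\mcal{U}_1}$ to $F|_{\mcal{V}_1}$ via the induced functor on a quotient category (you via $\mcal{D}_n/\mcal{W}_n$, the paper via $\mcal{D}_n/\mcal{V}_n$ and then ``similarly''), and then conclude with the criterion of Corollary~\ref{p17-1Apr30} for two consecutive stable t-structures. The only difference is which quotient you pass through first, which is immaterial.
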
 

\begin{proof}
By Proposition \ref{t-st-first}, Cororally~\ref{p7Apr30}, it easy to see that
$F'':\mcal{D}_1/\mcal{V}_1 \to \mcal{D}_2/\mcal{V}_2$ is  fully faithful (resp., equivalent), and
hence so is $F\mid_{\mcal{W}_1}$.
Similarly, we have that $F \mid_{\mcal{V}_1}$ is fully faithful (resp., equivalent).
Hence we have the statement by Lemma~\ref{17-1Apr30} (3).
\end{proof}

\section{Recollement of homotopy categories}\label{recollhtp}

In this section, we study recollements of homotopy categories. 
Let us start with the following general observation.

\begin{prop}\label{pm}
Let $\mcal{A}$ be an abelian category, and $\mcal{B}$ its additive subcategory.
Then $((\cat{K}^{+,\mrm{b}}/ \cat{K}^{\mrm{b}})( \mcal{B}), 
(\cat{K}^{-,\mrm{b}}/ \cat{K}^{\mrm{b}})(\mcal{B} ) )$ is a stable t-structure 
in $(\cat{K}^{\infty ,\mrm{b}}/\cat{K}^{\mrm{b}})(\mcal{B})$. 
\end{prop}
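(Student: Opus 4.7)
The plan is to verify the three axioms of Definition \ref{st-tprime} directly for the pair $(\mcal{U},\mcal{V}):=((\cat{K}^{+,\mrm{b}}/\cat{K}^{\mrm{b}})(\mcal{B}),(\cat{K}^{-,\mrm{b}}/\cat{K}^{\mrm{b}})(\mcal{B}))$ inside $\mcal{D}:=(\cat{K}^{\infty,\mrm{b}}/\cat{K}^{\mrm{b}})(\mcal{B})$. Axiom (a) is immediate because $\Sigma$ preserves bounded-below and bounded-above complexes separately.

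For axiom (c), fix $X\in\cat{K}^{\infty,\mrm{b}}(\mcal{B})$ and any integer $n$. The brutal truncation gives a degreewise-split short exact sequence
\[0\to\tau_{\geq n}X\to X\to\tau_{\leq n-1}X\to 0,\]
which yields a distinguished triangle in $\cat{K}^{\infty,\mrm{b}}(\mcal{B})$. Both end terms inherit bounded homology from $X$, and one checks $\tau_{\geq n}X\in\cat{K}^{+,\mrm{b}}(\mcal{B})$ and $\tau_{\leq n-1}X\in\cat{K}^{-,\mrm{b}}(\mcal{B})$. Passing to the quotient $\mcal{D}$ produces the triangle required by (c).

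Axiom (b) is the substantive part. Given $U\in\cat{K}^{+,\mrm{b}}(\mcal{B})$ and $V\in\cat{K}^{-,\mrm{b}}(\mcal{B})$, a morphism $\bar f\colon\bar U\to\bar V$ in $\mcal{D}$ is represented by a roof $U\xleftarrow{s}W\xrightarrow{g}V$ in $\cat{K}^{\infty,\mrm{b}}(\mcal{B})$ whose completion $W\xrightarrow{s}U\to C\to\Sigma W$ has $C\in\cat{K}^{\mrm{b}}(\mcal{B})$. First I would replace the apex: since $U$ and $C$ are both bounded below, the explicit mapping cone of any representative $U\to C$ is bounded below, and from the rotated triangle this cone is isomorphic in $\cat{K}(\mcal{B})$ to $\Sigma W$. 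Hence $W$ is isomorphic to a bounded-below complex with bounded homology, and by adjusting the roof along this isomorphism we may assume $W\in\cat{K}^{+,\mrm{b}}(\mcal{B})$. Next, since $V^i=0$ for $i$ above some bound $M$, the morphism $g$ factors canonically as $W\twoheadrightarrow\tau_{\leq M}W\xrightarrow{h}V$. With $W$ bounded below and now truncated above at $M$, the middle term $\tau_{\leq M}W$ lies in $\cat{K}^{\mrm{b}}(\mcal{B})$, and so its image in $\mcal{D}$ is zero. This forces $\bar g=0$, hence $\bar f=\bar g\circ\bar s^{-1}=0$.

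The main obstacle is the reduction of the apex in axiom (b): one must verify that the mapping-cone manipulation really does produce an isomorphism between $W$ and a complex in $\cat{K}^{+,\mrm{b}}(\mcal{B})$ which is compatible with the roof equivalence. Everything else is a routine truncation argument, and the proof does not require $\mcal{B}$ to have any special exactness properties beyond being an additive subcategory of $\mcal{A}$.
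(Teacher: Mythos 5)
Your proof is correct and follows the same route as the paper: axiom (c) is obtained from the brutal-truncation triangle $\tau_{\geq n}X\to X\to\tau_{\leq n-1}X\to\Sigma\tau_{\geq n}X$, and axiom (b) by showing that any roof from a bounded-below to a bounded-above complex factors through a bounded complex. The paper simply declares the Hom-vanishing ``clear''; your apex-replacement and truncation argument is a correct way of filling in that omitted detail.
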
 

\begin{proof}
It is clear that we have
\[
\Hom_{(\cat{K}^{\infty,\mrm{b}}/\cat{K}^{\mrm{b}})(\mcal{B})}
((\cat{K}^{+,\mrm{b}}/\cat{K}^{\mrm{b}})(\mcal{B}),
(\cat{K}^{-,\mrm{b}}/\cat{K}^{\mrm{b}})(\mcal{B}))=0 
\]
For any $X \in \cat{K}^{\infty,\mrm{b}}(\mcal{B})$,
$0 \to \tau_{\geq 1}X \to X \to \tau_{\leq 0}X \to 0$ is a short exact sequence, 
which implies a triangle 
$\tau_{\geq 1}X \to X \to \tau_{\leq 0}X \to \Sigma\tau_{\geq 1}X$. 
Since $\tau_{\geq 1}X \in \cat{K}^{+,\mrm{b}}(\mcal{B})$
and $\tau_{\leq 0}X \in \cat{K}^{-,\mrm{b}}(\mcal{B})$, we get the conclusion. 
\end{proof}

\begin{prop} \label{st-st0}
Let $\mcal{A}$ be an abelian category with enough injectives. Then we have the following.
\begin{enumerate}
\item
A pair $(\cat{K}^{\infty, \emptyset}(\Inj\mcal{A}),\cat{K}^{+,\mrm{b}}(\Inj\mcal{A}))$ 
is a stable t-structure in $\cat{K}^{\infty,\mrm{b}}(\Inj\mcal{A})$.
\item 
There is a localization exact sequence
\[\xymatrix{
\cat{K}^{\infty, \emptyset}(\Inj\mcal{A}) \ar@<1ex>[r]^{j_!}
& \cat{K}^{\infty,\mrm{b}}(\Inj\mcal{A}) \ar@<1ex>[l]^{j^*} \ar@<1ex>[r]^{\ i^*} 
& \cat{D}^{\mrm{b}}(\mcal{A})  \ar@<1ex>[l]^{\ i_*}
}\] 
for the canonical embedding 
$j_!:\cat{K}^{\infty, \emptyset}(\Inj\mcal{A}) \to \cat{K}^{\infty,\mrm{b}}(\Inj\mcal{A})$. 
\end{enumerate}
\end{prop}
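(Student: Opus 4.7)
The plan is first to verify in (1) the three axioms of Definition~\ref{st-tprime} for the pair $(\cat{K}^{\infty,\emptyset}(\Inj\mcal{A}),\cat{K}^{+,\mrm{b}}(\Inj\mcal{A}))$, and then to read off (2) from (1) using Proposition~\ref{t-st-first} together with the classical equivalence $\cat{K}^{+,\mrm{b}}(\Inj\mcal{A})\simeq\cat{D}^{\mrm{b}}(\mcal{A})$. Closure under $\Sigma$ is immediate. For the orthogonality condition $\Hom_{\cat{K}^{\infty,\mrm{b}}(\Inj\mcal{A})}(U,Y)=0$ with $U$ acyclic and $Y\in\cat{K}^{+,\mrm{b}}(\Inj\mcal{A})$, I would invoke the classical fact that any chain map from an acyclic complex to a bounded-below complex of injective objects is null-homotopic, proved by the standard inductive lifting argument using the injectivity of each component of $Y$.

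The substantive step is the triangle decomposition. Given $X\in\cat{K}^{\infty,\mrm{b}}(\Inj\mcal{A})$, the plan is to produce an injective resolution $f\colon X\to Y$ in $\cat{K}(\Inj\mcal{A})$ with $Y\in\cat{K}^{+,\mrm{b}}(\Inj\mcal{A})$. Since $X$ has bounded homology and $\mcal{A}$ has enough injectives, such a $Y$ together with a derived-category quasi-isomorphism $X\to Y$ exists by a standard Cartan--Eilenberg-type construction. The key point is that this derived quasi-isomorphism is represented by an actual morphism in $\cat{K}(\Inj\mcal{A})$, because $\Hom_{\cat{K}(\mcal{A})}(-,Y)=\Hom_{\cat{D}(\mcal{A})}(-,Y)$ whenever $Y$ is a bounded-below complex of injectives (this is precisely the null-homotopy lemma used for orthogonality). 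Completing $f$ to a triangle
\[
U\lgarr X\xarr{f}Y\lgarr\Sigma U
\]
in $\cat{K}(\Inj\mcal{A})$, the fiber $U$ is the shifted cone of $f$, whose terms $X^n\oplus Y^{n-1}$ are direct sums of injectives, so $U$ lies in $\cat{K}(\Inj\mcal{A})$. Moreover $U$ is acyclic since $f$ is a quasi-isomorphism, hence $U\in\cat{K}^{\infty,\emptyset}(\Inj\mcal{A})$, which supplies the required triangle and completes (1).

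For (2), I would apply Proposition~\ref{t-st-first}(I) to the stable t-structure of (1) to obtain a right adjoint $j^*$ of $j_!$ and a left adjoint to the inclusion of $\cat{K}^{+,\mrm{b}}(\Inj\mcal{A})$; Corollary~\ref{t-st-third}(1) then assembles these into a localization exact sequence whose right-hand term is $\cat{K}^{+,\mrm{b}}(\Inj\mcal{A})$. It remains to identify this right-hand term with $\cat{D}^{\mrm{b}}(\mcal{A})$ via the classical triangle equivalence $\cat{K}^{+,\mrm{b}}(\Inj\mcal{A})\simeq\cat{D}^{\mrm{b}}(\mcal{A})$ provided by injective resolutions; under this identification, $i^*$ is the natural functor sending a homotopy class to its derived class and $i_*$ is the injective resolution functor. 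The main obstacle throughout is axiom (c) of part (1): producing the injective resolution as a genuine morphism in $\cat{K}(\Inj\mcal{A})$ (rather than only as a roof in $\cat{D}(\mcal{A})$) and confirming that its fiber really lies in $\cat{K}^{\infty,\emptyset}(\Inj\mcal{A})$. Once this is in place, part (2) is a formal consequence of the general machinery developed in Section~\ref{t-strecoll}.
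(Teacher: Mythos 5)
Your proposal is correct and follows essentially the same route as the paper: the triangle in axiom (c) is obtained from a quasi-isomorphism $X\to V_X$ into a bounded-below complex of injectives (realized as an honest chain map via the standard null-homotopy lemma) by taking its mapping cone, and part (2) is then read off from Proposition~\ref{t-st-first} and Corollary~\ref{t-st-third}. You merely make explicit two points the paper leaves implicit, namely the orthogonality lemma and the identification $\cat{K}^{+,\mrm{b}}(\Inj\mcal{A})\simeq\cat{D}^{\mrm{b}}(\mcal{A})$.
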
 

\begin{proof}
It is clear that 
$\Hom_{\cat{K}^{\infty,\mrm{b}}(\Inj\mcal{A})}(\cat{K}^{\infty, \emptyset}(\Inj\mcal{A}),
\cat{K}^{+,\mrm{b}}(\Inj\mcal{A}))=0$.
Given any complex $X \in \cat{K}^{\infty,\mrm{b}}(\Inj\mcal{A})$, 
there is a complex $V_X \in \cat{K}^{+,\mrm{b}}(\Inj\mcal{A})$ which has a quasi-isomorphism
$X \xarr{\epsilon_X} V_X$.  By taking the mapping cone of $\epsilon_X$, we have a
triangle $U_X \xarr{\eta_X} X \xarr{\epsilon_X}V_X \to \Sigma U_X$ with
$U_X \in \cat{K}^{\infty, \emptyset}(\Inj\mcal{A})$.
Then $(\cat{K}^{\infty, \emptyset}(\Inj\mcal{A}), \\
\cat{K}^{+,\mrm{b}}(\Inj\mcal{A}))$ is a stable t-structure
in $\cat{K}^{\infty,\mrm{b}}(\Inj\mcal{A})$. By Proposition \ref{t-st-first} and 
Corollary \ref{t-st-third}, we are done with the proof.  
\end{proof} 

Dually we have the following observation.

\begin{prop}\label{st-st1f}
Let $\mcal{A}'$ be an abelian category with enough projectives. 
Then we have the following. 
\begin{enumerate}
\item 
A pair $(\cat{K}^{-,\mrm{b}}(\Proj\mcal{A}'),\cat{K}^{\infty, \emptyset}(\Proj\mcal{A}'))$ 
is a stable t-structure in \\
$\cat{K}^{\infty,\mrm{b}}(\Proj\mcal{A}')$.
\item
There is a localization exact sequence 
\[\xymatrix{
\cat{D}^{\mrm{b}}(\mcal{A}') \ar@<1ex>[r]^{i_*}
& \cat{K}^{\infty,\mrm{b}}(\Proj\mcal{A}') \ar@<1ex>[l]^{i^!} \ar@<1ex>[r]^{\ j^*} 
& \cat{K}^{\infty , \emptyset} (\mcal{A}')  \ar@<1ex>[l]^{\ j_*}
}\] 
for the canonical embedding 
$j_*:\cat{K}^{\infty ,\emptyset }(\Proj\mcal{A}') \to \cat{K}^{\infty,\mrm{b}}(\Proj\mcal{A}')$,
\end{enumerate} 
\end{prop}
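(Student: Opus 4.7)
The plan is to dualize the proof of Proposition \ref{st-st0}, replacing injective resolutions with projective resolutions throughout. I would first establish part (1) directly and then deduce part (2) as a formal consequence of Proposition \ref{t-st-first} and Corollary \ref{t-st-third}.

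For part (1), I would verify the two nontrivial axioms of Definition \ref{st-tprime} for the pair $(\cat{K}^{-,\mrm{b}}(\Proj\mcal{A}'), \cat{K}^{\infty,\emptyset}(\Proj\mcal{A}'))$. The Hom-vanishing $\Hom_{\cat{K}^{\infty,\mrm{b}}(\Proj\mcal{A}')}(\cat{K}^{-,\mrm{b}}(\Proj\mcal{A}'),\cat{K}^{\infty,\emptyset}(\Proj\mcal{A}'))=0$ is immediate from the classical fact, dual to the injective case, that any chain map from a bounded-above complex of projectives to an acyclic complex is null-homotopic; the homotopy is built inductively from the top nonzero degree downward using projectivity of each term. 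Closure of both subcategories under $\Sigma$ is clear. The triangle decomposition requires, for each $X \in \cat{K}^{\infty,\mrm{b}}(\Proj\mcal{A}')$ with cohomology concentrated in some interval $[a,b]$, a morphism $\eta_X : P_X \to X$ with $P_X \in \cat{K}^{-,\mrm{b}}(\Proj\mcal{A}')$ whose mapping cone is acyclic and termwise projective. I would construct $\eta_X$ in two stages: first use the good truncation $\sigma^{\leq b}X \hookrightarrow X$ (the subcomplex $\cdots \to X^{b-1} \to \ker d^b \to 0$), which is a quasi-isomorphism because $X$ is acyclic in degrees above $b$ and whose target is a bounded-above complex of objects of $\mcal{A}'$; second, since $\mcal{A}'$ has enough projectives, choose a projective resolution $P_X \to \sigma^{\leq b}X$ with $P_X \in \cat{K}^{-,\mrm{b}}(\Proj\mcal{A}')$. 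The composition $\eta_X$ is then a quasi-isomorphism in $\cat{K}^{\infty,\mrm{b}}(\Proj\mcal{A}')$, and its cone, being acyclic and termwise projective, lies in $\cat{K}^{\infty,\emptyset}(\Proj\mcal{A}')$, giving the required triangle $P_X \to X \to \operatorname{Cone}(\eta_X) \to \Sigma P_X$.

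Part (2) then follows mechanically: the stable t-structure of (1) produces, via Proposition \ref{t-st-first}, a colocalization $i^! : \cat{K}^{\infty,\mrm{b}}(\Proj\mcal{A}') \to \cat{K}^{-,\mrm{b}}(\Proj\mcal{A}')$ and a localization $j^* : \cat{K}^{\infty,\mrm{b}}(\Proj\mcal{A}') \to \cat{K}^{\infty,\emptyset}(\Proj\mcal{A}')$, and Corollary \ref{t-st-third}(2) packages these into a localization exact sequence; the identification of the right-hand quotient with $\cat{D}^{\mrm{b}}(\mcal{A}')$ is the standard equivalence given by taking projective resolutions of bounded complexes. The main expected obstacle is the two-stage construction of $\eta_X$, where one must verify that the projective resolution of the truncation $\sigma^{\leq b}X$ can be glued to the good-truncation inclusion so as to yield a single morphism in the homotopy category whose cone is simultaneously acyclic and a complex of projectives; this is the same bookkeeping that underlies the dual argument in Proposition \ref{st-st0}, and no ingredient beyond "enough projectives" is needed.
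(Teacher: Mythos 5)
Your proposal is correct and is exactly what the paper intends: Proposition \ref{st-st1f} is stated as the dual of Proposition \ref{st-st0} (the paper gives no separate proof, just the remark ``Dually''), and your argument is the faithful dualization — the null-homotopy of maps from bounded-above complexes of projectives into acyclic complexes, the quasi-isomorphism $P_X\to X$ obtained by truncating and resolving, the cone giving the required triangle, and part (2) via Proposition \ref{t-st-first} and Corollary \ref{t-st-third}. No issues.
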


\begin{thm} \label{st-st1}~
\begin{enumerate}
\item
Let $\mcal{A}$ be an abelian category with enough injectives.
\begin{enumerate}
\item[(a)]
$((\cat{K}^{\infty, \emptyset}(\Inj\mcal{A}), (\cat{K}^{+,\mrm{b}}/ \cat{K}^{\mrm{b}})(\Inj\mcal{A} ))$ and 
$((\cat{K}^{+,\mrm{b}}/ \cat{K}^{\mrm{b}})(\Inj\mcal{A} ),\\ 
(\cat{K}^{-,\mrm{b}}/ \cat{K}^{\mrm{b}})(\Inj\mcal{A} ))$ 
are stable t-structures in
$(\cat{K}^{\infty, \mrm{b}}/ \cat{K}^{\mrm{b}})(\Inj\mcal{A} )$. 
\item[(b)]
The canonical embedding 
$i_*:(\cat{K}^{+,\mrm{b}}/\cat{K}^{\mrm{b}})(\Inj\mcal{A})
\to (\cat{K}^{\infty,\mrm{b}}/\cat{K}^{\mrm{b}})(\Inj\mcal{A})$
induces a recollement
{\small \[\xymatrix{
(\cat{K}^{+,\mrm{b}}/\cat{K}^{\mrm{b}})(\Inj\mcal{A}) \ar@<-1ex>[r]^{i_{*}}
& (\cat{K}^{\infty,\mrm{b}}/\cat{K}^{\mrm{b}})(\Inj\mcal{A})
\ar@<-2ex>[l]_{i^{*}} \ar@<2ex>[l]^{i^{!}} \ar@<-1ex>[r]^{j^{*}} 
& (\cat{K}^{\infty,\mrm{b}}/\cat{K}^{+,\mrm{b}})(\Inj\mcal{A})
\ar@<-2ex>[l]_{j_{!}} \ar@<2ex>[l]^{j_{*}}
}\]}
such that $\opn{Im}j_{!}=\cat{K}^{\infty, \emptyset}(\Inj\mcal{A})$ and 
$\opn{Im}j_{*}=(\cat{K}^{-, \mrm{b}}/\cat{K}^{\mrm{b}})(\Inj\mcal{A})$.
\end{enumerate}
\item 
Let $\mcal{A}'$ be an abelian category with enough projectives.
\begin{enumerate}
\item[(a)]
$((\cat{K}^{+,\mrm{b}}/ \cat{K}^{\mrm{b}})(\Proj\mcal{A}' ),
(\cat{K}^{-,\mrm{b}}/ \cat{K}^{\mrm{b}})(\Proj\mcal{A}' ))$ and 
$((\cat{K}^{-,\mrm{b}}/ \cat{K}^{\mrm{b}})(\Proj\mcal{A}' ), \\
\cat{K}^{\infty, \emptyset}(\Proj\mcal{A}' ) )$ are 
stable t-structures in
$(\cat{K}^{\infty,\mrm{b}}/  \cat{K}^{\mrm{b}})(\Proj\mcal{A}' )$. 
\item[(b)]
The canonical embedding 
$i_*:(\cat{K}^{-,\mrm{b}}/\cat{K}^\mrm{b})(\Proj\mcal{A}')
\to \\
(\cat{K}^{\infty,\mrm{b}}/\cat{K}^\mrm{b})(\Proj\mcal{A}')$
induces a recollement
{\footnotesize
\[\xymatrix{
(\cat{K}^{-,\mrm{b}}/\cat{K}^\mrm{b})(\Proj\mcal{A}') \ar@<-1ex>[r]^{i_{*}}
& (\cat{K}^{\infty,\mrm{b}}/\cat{K}^\mrm{b})(\Proj\mcal{A}')
\ar@<-2ex>[l]_{i^{*}} \ar@<2ex>[l]^{i^{!}} \ar@<-1ex>[r]^{j^{*}} 
& (\cat{K}^{\infty,\mrm{b}}/\cat{K}^{-,\mrm{b}})(\Proj\mcal{A}')
\ar@<-2ex>[l]_{j_{!}} \ar@<2ex>[l]^{j_{*}}
}\] }
such that
$\opn{Im}j_{!}=(\cat{K}^{-, \mrm{b}}/\cat{K}^\mrm{b})(\Proj\mcal{A}')$ and
$\opn{Im}j_{*}=\cat{K}^{\infty, \emptyset}(\Proj\mcal{A}')$.
\end{enumerate}

\end{enumerate}
\end{thm}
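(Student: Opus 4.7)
The plan is to assemble everything from the already-proven propositions: Proposition \ref{pm}, Proposition \ref{st-st0}, Proposition \ref{st-st1f}, Proposition \ref{st-stK} and Proposition \ref{st-t-1second}(2). Since the two halves of the theorem are dual, I only sketch part (1) and mention the dualization at the end.

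For part (1)(a), I would first note that the second stable t-structure
\[((\cat{K}^{+,\mrm{b}}/\cat{K}^{\mrm{b}})(\Inj\mcal{A}),(\cat{K}^{-,\mrm{b}}/\cat{K}^{\mrm{b}})(\Inj\mcal{A}))\]
in $(\cat{K}^{\infty,\mrm{b}}/\cat{K}^{\mrm{b}})(\Inj\mcal{A})$ is just Proposition \ref{pm} applied to the additive subcategory $\mcal{B}=\Inj\mcal{A}$, so nothing new is needed.

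The first stable t-structure requires a short reduction step. Proposition \ref{st-st0}(1) already gives the stable t-structure
\[(\cat{K}^{\infty,\emptyset}(\Inj\mcal{A}),\cat{K}^{+,\mrm{b}}(\Inj\mcal{A}))\]
in $\cat{K}^{\infty,\mrm{b}}(\Inj\mcal{A})$. Since $\cat{K}^{\mrm{b}}(\Inj\mcal{A})$ is a thick subcategory of $\cat{K}^{+,\mrm{b}}(\Inj\mcal{A})$, the ``in particular'' clause of Proposition \ref{st-stK} applies with $\mcal{C}=\cat{K}^{\mrm{b}}(\Inj\mcal{A})$ sitting inside the $\mcal{V}$-part; noting that the quotient functor is fully faithful on $\cat{K}^{\infty,\emptyset}(\Inj\mcal{A})$ because $\Hom_{\cat{K}^{\infty,\mrm{b}}(\Inj\mcal{A})}(\cat{K}^{\infty,\emptyset}(\Inj\mcal{A}),\cat{K}^{\mrm{b}}(\Inj\mcal{A}))=0$, we can identify $\cat{K}^{\infty,\emptyset}(\Inj\mcal{A})$ with a full subcategory of the quotient, and the conclusion is the desired stable t-structure
\[(\cat{K}^{\infty,\emptyset}(\Inj\mcal{A}),(\cat{K}^{+,\mrm{b}}/\cat{K}^{\mrm{b}})(\Inj\mcal{A}))\]
in $(\cat{K}^{\infty,\mrm{b}}/\cat{K}^{\mrm{b}})(\Inj\mcal{A})$.

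For part (1)(b), I now have two consecutive stable t-structures with common middle term $(\cat{K}^{+,\mrm{b}}/\cat{K}^{\mrm{b}})(\Inj\mcal{A})$, so Proposition \ref{st-t-1second}(2) produces a recollement
\[\xymatrix{
(\cat{K}^{+,\mrm{b}}/\cat{K}^{\mrm{b}})(\Inj\mcal{A}) \ar@<-1ex>[r]^{i_{*}}
& (\cat{K}^{\infty,\mrm{b}}/\cat{K}^{\mrm{b}})(\Inj\mcal{A})
\ar@<-2ex>[l]_{i^{*}} \ar@<2ex>[l]^{i^{!}} \ar@<-1ex>[r]^{j^{*}}
& (\cat{K}^{\infty,\mrm{b}}/\cat{K}^{+,\mrm{b}})(\Inj\mcal{A})
\ar@<-2ex>[l]_{j_{!}} \ar@<2ex>[l]^{j_{*}}
}\]
with $\opn{Im}j_{!}=\cat{K}^{\infty,\emptyset}(\Inj\mcal{A})$ and $\opn{Im}j_{*}=(\cat{K}^{-,\mrm{b}}/\cat{K}^{\mrm{b}})(\Inj\mcal{A})$, as required; the quotient $(\cat{K}^{\infty,\mrm{b}}/\cat{K}^{+,\mrm{b}})(\Inj\mcal{A})$ appears because, by part (3) of Proposition \ref{t-st-first}, $j^{*}$ identifies the right-hand term of the recollement with $\mcal{D}/\mcal{V}$ for $\mcal{V}=(\cat{K}^{+,\mrm{b}}/\cat{K}^{\mrm{b}})(\Inj\mcal{A})$, i.e.\ with $(\cat{K}^{\infty,\mrm{b}}/\cat{K}^{\mrm{b}})(\Inj\mcal{A})$ further quotiented by $(\cat{K}^{+,\mrm{b}}/\cat{K}^{\mrm{b}})(\Inj\mcal{A})$, which by an elementary quotient-of-a-quotient identification is $(\cat{K}^{\infty,\mrm{b}}/\cat{K}^{+,\mrm{b}})(\Inj\mcal{A})$.

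Part (2) is proved by exactly the same machine: Proposition \ref{pm} supplies the first stable t-structure, Proposition \ref{st-st1f}(1) combined with Proposition \ref{st-stK} (now with $\cat{K}^{\mrm{b}}(\Proj\mcal{A}')\subseteq\cat{K}^{-,\mrm{b}}(\Proj\mcal{A}')$ playing the role of $\mcal{C}\subseteq\mcal{U}$) supplies the second, and Proposition \ref{st-t-1second}(2) produces the recollement. The only place where I expect any friction at all is the small bookkeeping of checking that $\cat{K}^{\mrm{b}}$ really is thick inside the larger homotopy categories and that the quotient maps are fully faithful on the other piece of each stable t-structure, but both points are routine consequences of the $\Hom$-vanishing that makes a stable t-structure a stable t-structure.
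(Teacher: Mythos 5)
Your proposal is correct and follows essentially the same route as the paper, whose proof simply cites Propositions \ref{pm}, \ref{st-st0}, \ref{st-st1f} and \ref{st-stK} (with the recollement in part (b) coming from Proposition \ref{st-t-1second}(2), exactly as you say). You have merely spelled out the bookkeeping that the paper leaves implicit.
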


\begin{proof} 
Immediately from Propositions ~\ref{pm}, \ref{st-st0}, \ref{st-st1f}, \ref{st-stK}.
\end{proof}

\begin{cor} \label{st-h2}
\begin{enumerate}
\item Let $\mcal{A}$ be an abelian category with enough injectives.
Then we have a triangle equivalence
$(\cat{K}^{-, \mrm{b}}/\cat{K}^{\mrm{b}})(\Inj\mcal{A}) \simeq 
\cat{K}^{\infty, \emptyset}(\Inj\mcal{A})$.
\item Let $\mcal{A}'$ be an abelian category with enough projectives.
Then we have a triangle equivalence
$(\cat{K}^{+, \mrm{b}}/\cat{K}^{\mrm{b}})(\Proj \mcal{A}') \simeq 
\cat{K}^{\infty, \emptyset}(\Proj \mcal{A}')$.
\end{enumerate}
\end{cor}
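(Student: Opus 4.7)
The plan is to extract both equivalences directly from Theorem~\ref{st-st1} together with the abstract machinery already in place, without any new computation. For part~(1), set $\mcal{D} := (\cat{K}^{\infty,\mrm{b}}/\cat{K}^{\mrm{b}})(\Inj\mcal{A})$. Theorem~\ref{st-st1}(1)(a) furnishes two stable t-structures in $\mcal{D}$, namely
\[
(\cat{K}^{\infty,\emptyset}(\Inj\mcal{A}),\ (\cat{K}^{+,\mrm{b}}/\cat{K}^{\mrm{b}})(\Inj\mcal{A}))
\quad \text{and} \quad
((\cat{K}^{+,\mrm{b}}/\cat{K}^{\mrm{b}})(\Inj\mcal{A}),\ (\cat{K}^{-,\mrm{b}}/\cat{K}^{\mrm{b}})(\Inj\mcal{A})),
\]
which share the common middle term $(\cat{K}^{+,\mrm{b}}/\cat{K}^{\mrm{b}})(\Inj\mcal{A})$.

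First I would apply Proposition~\ref{t-st-first}(3) to each of these t-structures separately. From the first, the right adjoint $i^{!}$ to the inclusion of $\cat{K}^{\infty,\emptyset}(\Inj\mcal{A})$ yields a triangle equivalence $\mcal{D}/(\cat{K}^{+,\mrm{b}}/\cat{K}^{\mrm{b}})(\Inj\mcal{A}) \simeq \cat{K}^{\infty,\emptyset}(\Inj\mcal{A})$; from the second, the left adjoint $j^{*}$ to the inclusion of $(\cat{K}^{-,\mrm{b}}/\cat{K}^{\mrm{b}})(\Inj\mcal{A})$ yields a triangle equivalence $\mcal{D}/(\cat{K}^{+,\mrm{b}}/\cat{K}^{\mrm{b}})(\Inj\mcal{A}) \simeq (\cat{K}^{-,\mrm{b}}/\cat{K}^{\mrm{b}})(\Inj\mcal{A})$. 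Composing these two identifications of the common quotient delivers the desired triangle equivalence.

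Alternatively, the conclusion is immediate from the recollement produced in Theorem~\ref{st-st1}(1)(b): by the very definition of a recollement, both $j_{!}$ and $j_{*}$ are fully faithful, and in our case $\opn{Im} j_{!} = \cat{K}^{\infty,\emptyset}(\Inj\mcal{A})$ and $\opn{Im} j_{*} = (\cat{K}^{-,\mrm{b}}/\cat{K}^{\mrm{b}})(\Inj\mcal{A})$, so both are triangle equivalent to the quotient $(\cat{K}^{\infty,\mrm{b}}/\cat{K}^{+,\mrm{b}})(\Inj\mcal{A})$, hence to each other. Part~(2) is proved in exactly the dual manner from Theorem~\ref{st-st1}(2).

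I do not foresee any serious obstacle: the statement is essentially a formal repackaging of Theorem~\ref{st-st1} via Proposition~\ref{t-st-first}(3) (or equivalently via Proposition~\ref{st-t-1second}(2)). The only point worth emphasising is the observation that the two stable t-structures provided by Theorem~\ref{st-st1} share a common middle term, which is precisely the recollement configuration forcing the two "outer" subcategories to be triangle equivalent to one common quotient, and hence to each other.
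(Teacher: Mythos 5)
Your proposal is correct and matches the paper's own argument: the paper proves this exactly via the chain $\cat{K}^{\infty,\emptyset}(\Inj\mcal{A})\simeq \opn{Im}j_{!} \simeq (\cat{K}^{\infty,\mrm{b}}/\cat{K}^{+,\mrm{b}})(\Inj\mcal{A}) \simeq \opn{Im}j_{*}\simeq (\cat{K}^{-,\mrm{b}}/\cat{K}^{\mrm{b}})(\Inj\mcal{A})$ coming from the recollement of Theorem~\ref{st-st1}(1)(b), which is your ``alternative'' route, and your first route via Proposition~\ref{t-st-first}(3) applied to the two t-structures with common middle term is just a rephrasing of the same identification with the common quotient.
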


\begin{proof}
(1) According to Theorem \ref{st-st1}, we have the triangle equivalences \\
$\cat{K}^{\infty,\emptyset}(\Inj\mcal{A})\simeq \opn{Im}j_{!} \simeq 
(\cat{K}^{\infty,\mrm{b}}/\cat{K}^{+,\mrm{b}})(\Inj\mcal{A}) \simeq 
\opn{Im}j_{*}\simeq (\cat{K}^{-,\mrm{b}}/\cat{K}^{\mrm{b}})(\Inj\mcal{A})$. 
Similarly we get (2). 
\end{proof}

For Iwanaga-Gorenstein rings, we have one more stable t-structure.

\begin{lem}\label{st-h4}
Let $R$ be an Iwanaga-Gorenstein ring.
Then 
$(\cat{K}^{\infty ,\emptyset}(\pj {R}), \\
\cat{K}^{+,\mrm{b}}(\pj {R}))$ is 
a stable t-structure in $\cat{K}^{\infty ,\mrm{b}}(\pj {R})$.
\end{lem}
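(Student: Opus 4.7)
The plan is to verify the three axioms of a stable t-structure from Definition \ref{st-tprime} in turn. Closure under the shift functor (axiom (a)) is immediate for both $\cat{K}^{\infty,\emptyset}(\pj R)$ and $\cat{K}^{+,\mrm{b}}(\pj R)$. The remaining two axioms genuinely use the Iwanaga-Gorenstein hypothesis.

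For the Hom-vanishing (b), I would take $U\in\cat{K}^{\infty,\emptyset}(\pj R)$ and $V\in\cat{K}^{+,\mrm{b}}(\pj R)$ with $V^i=0$ for $i<n$, and for any chain map $f:U\to V$ build a chain homotopy $s=(s^i:U^i\to V^{i-1})_{i\in\z}$ by induction on $i$ starting from $s^i=0$ for $i\le n$. At the inductive step, the residual map $g^i:=f^i-d_V^{i-1}s^i$ vanishes on $\ker d_U^i=B^i_U$ (by the homotopy condition in the previous degree together with acyclicity of $U$), and so factors through $U^i/B^i_U\cong B^{i+1}_U\hookrightarrow U^{i+1}$; extending this factorisation to $U^{i+1}$ to define $s^{i+1}$ requires vanishing of $\Ext^1_R(B^{i+2}_U,V^i)$. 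The Iwanaga-Gorenstein hypothesis delivers this vanishing: over such a ring every acyclic complex of finitely generated projectives is \emph{totally} acyclic, so each cocycle $B^j_U$ lies in $\CM R$, and therefore $\Ext^p_R(B^j_U,Q)=0$ for every $p>0$ and $Q\in\pj R$.

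For the triangle axiom (c), given $X\in\cat{K}^{\infty,\mrm{b}}(\pj R)$ with $H^i(X)=0$ for $i<a$, my plan is to construct a quasi-isomorphism $\alpha:X\to V$ with $V\in\cat{K}^{+,\mrm{b}}(\pj R)$; the shifted mapping cone of $\alpha$ then provides the acyclic complex $U$ completing the triangle. The starting observation is that for $k$ sufficiently large (say $k\ge d:=\opn{idim}_R R$), the cocycle $N:=Z^{a-k}(X)$ lies in $\CM R$, because it is a high-order syzygy of a finitely generated module and such syzygies are always Cohen-Macaulay over an Iwanaga-Gorenstein ring. As a CM module, $N$ admits a right projective coresolution $0\to N\to Q^0\to Q^1\to\cdots$, which I splice with the upper part $X^{\ge a-k}$ at the common submodule $N\hookrightarrow X^{a-k}$ to define $V$: a bounded-below complex of finitely generated projectives equipped with a natural quasi-isomorphism from $X$ whose mapping cone is acyclic.

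The main obstacle will be axiom (c): making the splicing in the previous paragraph rigorous so that the chain map $\alpha$ is well-defined and compatible with the differentials of $X$ everywhere, and verifying acyclicity of the mapping cone in every degree rather than just in the range where $V$ agrees with $X$. A cleaner alternative would be to reduce the entire statement via the duality $(-)^{*}=\Hom_R(-,R)$ to Proposition \ref{st-st1f} applied to $\mcal{A}'=\fmod R^{op}$; but that approach has its own technical subtleties---preservation of bounded cohomology and of acyclicity under $(-)^{*}$---which again rest on the nontrivial characterisation of Iwanaga-Gorenstein rings via the equivalence of acyclicity and total acyclicity for projective complexes.
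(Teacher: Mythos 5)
Your overall route --- a direct verification of the three axioms --- is genuinely different from the paper's proof, which is a one-line reduction: the paper applies the duality $\Hom_R(-,R):\cat{K}(\pj R)\to\cat{K}(\pj {R^{\mrm{op}}})^{\mrm{op}}$, notes that it interchanges $\cat{K}^{+,\mrm{b}}$ with $\cat{K}^{-,\mrm{b}}$ and preserves $\cat{K}^{\infty,\emptyset}$ and $\cat{K}^{\infty,\mrm{b}}$ (this is where the Iwanaga--Gorenstein hypothesis enters, via exactly the facts you isolate: acyclic equals totally acyclic, and preservation of bounded homology), and then transports the stable t-structure $(\cat{K}^{-,\mrm{b}}(\pj {R^{\mrm{op}}}),\cat{K}^{\infty,\emptyset}(\pj {R^{\mrm{op}}}))$ of Proposition \ref{st-st1f}. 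So the ``cleaner alternative'' you relegate to your last sentence is in fact the paper's argument. Your treatment of axiom (b) is correct and complete: the dimension shift showing that every cocycle of an acyclic complex of finitely generated projectives is Cohen--Macaulay, followed by the inductive construction of the homotopy with obstruction lying in $\Ext^1_R(B^{i+2}_U,V^i)=0$, is the standard argument.

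The gap is in axiom (c), and it is where you predicted. The ``splicing'' you describe does not define anything: the coresolution $0\to N\to Q^0\to Q^1\to\cdots$ and the truncation $N\hookrightarrow X^{a-k}\to X^{a-k+1}\to\cdots$ both extend $N$ to the right, so there is no complex that agrees with one of them in low degrees and with the other in high degrees, and no candidate chain map from $X$ into such a hybrid. What works is to splice $Q^\bullet$ in the other direction, with the \emph{lower} part of $X$: since $\mrm{H}^i(X)=0$ for $i<a$, the complex $\cdots\to X^{a-k-1}\to N\to 0$ is a projective resolution of $N$, and gluing it to $0\to N\to Q^0\to\cdots$ yields a (totally) acyclic complex $U$ with $U^i=X^i$ for $i<a-k$. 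The same $\Ext$-vanishing as in (b) (the cocycles of $Q^\bullet$ are Cohen--Macaulay and the $X^i$ are projective) lets you build, degree by degree, a chain map $\eta:U\to X$ equal to the identity in all degrees $<a-k$. Then $V:=\opn{Cone}(\eta)$ is isomorphic in $\cat{K}(\pj R)$ to a bounded-below complex with bounded homology (because $\eta$ is an isomorphism in all sufficiently low degrees), and rotating the cone triangle gives $U\to X\to V\to\Sigma U$ as required; your $\alpha:X\to V$ is the canonical map into the cone, automatically a quasi-isomorphism since $U$ is acyclic. With this correction (and with $k\ge\opn{idim}R_R+1$ rather than $k\ge\opn{idim}R_R$, so that $Z^{a-k}(X)$ is genuinely Cohen--Macaulay by dimension shift from $Z^{a-1}(X)$), your direct proof goes through.
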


\begin{proof}
By the assumption, $\Hom_{R}(-,R)$ induces dualities 
\[\begin{aligned}
\cat{K}^{\infty,\mrm{b}}(\pj {R}) \simeq \cat{K}^{\infty,\mrm{b}}(\pj {R^{\mrm{op}}}),\ \ \ &
\cat{K}^{\infty, \emptyset}(\pj {R}) \simeq \cat{K}^{\infty, \emptyset}(\pj {R^{\mrm{op}}}) \\
\cat{K}^{-, \mrm{b}}(\pj {R}) \simeq \cat{K}^{+, \mrm{b}}(\pj {R^{\mrm{op}}}),\ \ \ &
\cat{K}^{+, \mrm{b}}(\pj {R}) \simeq \cat{K}^{-, \mrm{b}}(\pj {R^{\mrm{op}}}) .
\end{aligned}\]
Hence a stable t-structure 
$( \cat{K}^{-, \mrm{b}}(\pj {R^{\mrm{op}}}), \cat{K}^{\infty, \emptyset}(\pj {R^{\mrm{op}}}))$ 
in $\cat{K}^{\infty,\mrm{b}}(\pj {R^{\mrm{op}}})$ implies 
a stable t-structure $(\cat{K}^{\infty ,\emptyset}(\pj {R}), \cat{K}^{+,\mrm{b}}(\pj {R}))$
in $\cat{K}^{\infty ,\mrm{b}}(\pj {R})$.
\end{proof}

\begin{thm} \label{st-h3}
Let $R$ be an Iwanaga-Gorenstein ring, 
Then
\[
((\cat{K}^{+,\mrm{b}}/\cat{K}^{\mrm{b}})(\pj {R}), 
(\cat{K}^{-,\mrm{b}}/\cat{K}^{\mrm{b}})(\pj {R}), 
\cat{K}^{\infty ,\emptyset}(\pj {R}))
\] 
is a triangle of recollements
in $(\cat{K}^{\infty ,\mrm{b}}/\cat{K}^{\mrm{b}})(\pj {R})$.
In particular, we have triangle equivalences
\[
(\cat{K}^{+,\mrm{b}}/\cat{K}^{\mrm{b}})(\pj {R})\simeq 
(\cat{K}^{-,\mrm{b}}/\cat{K}^{\mrm{b}})(\pj {R})\simeq
\cat{K}^{\infty ,\emptyset}(\pj {R})
\]
\end{thm}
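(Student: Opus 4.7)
The plan is to exhibit three stable t-structures on $\mcal{D}:=(\cat{K}^{\infty,\mrm{b}}/\cat{K}^{\mrm{b}})(\pj R)$ whose components cycle through the three claimed subcategories. Once this is done, the definition of a triangle of recollements is satisfied, and the displayed equivalences follow from Proposition~\ref{added} (each pair of adjacent components is equivalent to the third via the associated recollement).

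For the first stable t-structure, Proposition~\ref{pm} applied with $\mcal{B}=\pj R$ inside $\mcal{A}=\fmod R$ immediately produces
\[\bigl((\cat{K}^{+,\mrm{b}}/\cat{K}^{\mrm{b}})(\pj R),\ (\cat{K}^{-,\mrm{b}}/\cat{K}^{\mrm{b}})(\pj R)\bigr)\]
in $\mcal{D}$, and no Gorenstein hypothesis is needed here.

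For the second stable t-structure, since $R$ is noetherian, $\fmod R$ is abelian with enough projectives and $\Proj(\fmod R)=\pj R$. Thus Proposition~\ref{st-st1f}(1) gives a stable t-structure $(\cat{K}^{-,\mrm{b}}(\pj R),\cat{K}^{\infty,\emptyset}(\pj R))$ in $\cat{K}^{\infty,\mrm{b}}(\pj R)$. Since $\cat{K}^{\mrm{b}}(\pj R)$ is a thick subcategory contained in the left component, the ``in particular'' clause of Proposition~\ref{st-stK} passes this down to
\[\bigl((\cat{K}^{-,\mrm{b}}/\cat{K}^{\mrm{b}})(\pj R),\ \cat{K}^{\infty,\emptyset}(\pj R)\bigr)\]
in $\mcal{D}$.

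The third stable t-structure is where the Iwanaga-Gorenstein hypothesis enters: Lemma~\ref{st-h4} supplies $(\cat{K}^{\infty,\emptyset}(\pj R),\cat{K}^{+,\mrm{b}}(\pj R))$ in $\cat{K}^{\infty,\mrm{b}}(\pj R)$. As $\cat{K}^{\mrm{b}}(\pj R)$ is thick and contained in the right component $\cat{K}^{+,\mrm{b}}(\pj R)$, one more application of Proposition~\ref{st-stK} yields
\[\bigl(\cat{K}^{\infty,\emptyset}(\pj R),\ (\cat{K}^{+,\mrm{b}}/\cat{K}^{\mrm{b}})(\pj R)\bigr)\]
in $\mcal{D}$. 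With $A=(\cat{K}^{+,\mrm{b}}/\cat{K}^{\mrm{b}})(\pj R)$, $B=(\cat{K}^{-,\mrm{b}}/\cat{K}^{\mrm{b}})(\pj R)$ and $C=\cat{K}^{\infty,\emptyset}(\pj R)$, the three stable t-structures are precisely $(A,B)$, $(B,C)$ and $(C,A)$, so $(A,B,C)$ is a triangle of recollements. I do not anticipate any genuine obstacle; the proof is an assembly of previously established facts, with all the Gorenstein content concentrated in Lemma~\ref{st-h4}, which was obtained by transporting the stable t-structure from the opposite side through the $\Hom_R(-,R)$-duality.
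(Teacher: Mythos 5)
Your proof is correct and follows essentially the same route as the paper: the paper's one-line proof cites Lemma~\ref{st-h4}, Proposition~\ref{st-stK} and Theorem~\ref{st-st1}, and your argument simply unpacks Theorem~\ref{st-st1}(2) into its ingredients (Proposition~\ref{pm} for the first t-structure, Proposition~\ref{st-st1f} plus Proposition~\ref{st-stK} for the second) and then obtains the third t-structure from Lemma~\ref{st-h4} and Proposition~\ref{st-stK} exactly as the authors do. The concluding equivalences via Proposition~\ref{added} also match the paper's intent.
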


\begin{proof} 
According to Lemma \ref{st-h4}, Proposition \ref{st-stK} and Theorem \ref{st-st1}, 
we have the conclusion.
\end{proof}

\section{Cohen-Macaulay modules and the category of morphisms}\label{CMMorph}

Throughout this section let $R$ be an Iwanaga-Gorenstein ring.
In this section we introduce the category of morphisms between $R$-modules \cite{A1},
and study the relationship with the category of $\mrm{T}_2(R)$-modules.

Let us recall the following well-known result.

\begin{prop}\label{cm} 
The following hold.
\begin{enumerate}
\rmitem{1} $\CM R=\{Z^0(X)\ |\ X\in\cat{C} (\pj R), \mrm{H}^i (X) =0 ~(i \in \mathbb{Z} ) \}$.
\rmitem{2} $\underline{\CM} R\simeq\cat{K}^{\infty,\emptyset}(\pj R)$.
\end{enumerate}
\end{prop}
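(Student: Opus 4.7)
The plan is to prove (1) first, then deduce (2) via the functor $F$ sending an acyclic complex $X \in \cat{K}^{\infty,\emptyset}(\pj R)$ to the cocycle $Z^0(X)$.

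For the $\supseteq$ direction of (1), let $X$ be acyclic with $X^i \in \pj R$ and write $Z^i := Z^i(X)$. The short exact sequences $0 \to Z^i \to X^i \to Z^{i+1} \to 0$ yield dimension-shifting isomorphisms $\Ext^j_R(Z^{i+1}, R) \cong \Ext^{j+1}_R(Z^i, R)$ for $j \geq 1$, since $X^i \in \pj R$ kills higher $\Ext$ into $R$. Iterating, $\Ext^j_R(Z^0, R) \cong \Ext^{j+k}_R(Z^{-k}, R)$ for every $k \geq 0$; taking $k > \opn{idim}_R R$ forces the right-hand side to vanish, so $Z^0 \in \CM R$. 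For the $\subseteq$ direction, given $M \in \CM R$, the left half of the desired complex is any f.g.~projective resolution (existing since $R$ is Noetherian). For the right half, I would inductively produce short exact sequences $0 \to M \to Q \to M' \to 0$ with $Q \in \pj R$ and $M' \in \CM R$. This requires embedding $M$ into a f.g.~projective: starting from a f.g.~projective resolution of $\Hom_R(M, R)$ over $R^{\mathrm{op}}$ and dualizing, the vanishing of $\Ext^i_R(M, R)$ combined with the Iwanaga-Gorenstein condition yields both reflexivity of $M$ and the required embedding; the cokernel $M'$ inherits $\CM$-vanishing by the same dimension-shifting argument, and splicing gives the acyclic complex.

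For (2), the functor $F$ is well-defined on morphisms modulo the respective equivalence relations: if $f \colon X \to Y$ is null-homotopic via $h$, then on cocycles $Z^0(f)|_{Z^0(X)}$ factors through $Y^{-1} \in \pj R$ by way of the term $d \circ h|_{Z^0(X)}$, so $Z^0(f)$ is zero in $\underline{\CM} R$; conversely, any factorization of $Z^0(f)$ through a projective in $\CM R$ can be extended to a null-homotopy of $f$ by successively lifting the factorization along $X$ and $Y$ using the Frobenius structure of $\CM R$ with $\pj R$ as the subcategory of projective-injective objects. This establishes full faithfulness. Essential surjectivity is (1). Finally, the translation $\Sigma$ on $\cat{K}^{\infty,\emptyset}(\pj R)$ corresponds under $F$ to the cosyzygy functor $\Omega^{-1}$ on $\underline{\CM} R$, and mapping cones of chain maps restrict on cocycles to the canonical triangles of the Frobenius category $\CM R$, so $F$ is a triangle functor.

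The main obstacle is the $\subseteq$ direction of (1), specifically the construction of a f.g.~projective coresolution of $M$ whose cocycles remain in $\CM R$. This pivots on reflexivity of Cohen-Macaulay modules, a nontrivial consequence of the Iwanaga-Gorenstein hypothesis obtained by combining the vanishing $\Ext^i_R(M, R) = 0$ with the finite injective dimension of $R$ on both sides. Once (1) is in hand, the comparison in (2) between the stable category and the homotopy category of acyclic complexes is the now-standard equivalence between $\underline{\mcal{F}}$ and the totally-acyclic homotopy category of a Frobenius category $\mcal{F}$.
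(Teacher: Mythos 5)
Your proposal is correct in substance, and it reaches the inclusion $\CM R\subseteq\{Z^0(X)\}$ by a genuinely different route from the paper. The paper gets the right half of the acyclic complex by applying the stable t-structure $(\cat{K}^{\infty,\emptyset}(\pj R),\cat{K}^{+,\mrm{b}}(\pj R))$ of Lemma \ref{st-h4} to the projective resolution $P$ of $M$ inside $\cat{K}^{\infty,\mrm{b}}(\pj R)$: this yields $S\in\cat{K}^{+,\mrm{b}}(\pj R)$ quasi-isomorphic to $M$, and then one checks that $C^0=\opn{Cok}(S^{-1}\to S^0)$ has finite projective dimension and no higher $\Ext$ into $R$, hence is projective, so $S$ may be truncated and spliced with $P$. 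You instead argue module-by-module in the classical Auslander--Bridger style, dualizing a projective resolution of $\Hom_R(M,R)$ over $R^{\mathrm{op}}$ and invoking reflexivity. The two routes rest on the same engine, namely the duality $\Hom_R(-,R)$ together with $\opn{idim}{}_RR<\infty$ and $\opn{idim}R_R<\infty$; the paper packages it once and for all at the level of homotopy categories (Lemma \ref{st-h4}), which it then reuses elsewhere, while your version is self-contained at the module level and avoids the small detour of proving $C^0$ projective. Your $\supseteq$ direction (dimension shifting against $\opn{idim}R_R<\infty$) and your treatment of (2) (null-homotopies versus factorizations through $\pj R$, $\Sigma\leftrightarrow\Omega^{-1}$, cones versus the Frobenius triangles) are exactly what the paper leaves as ``easy to see,'' and they are fine.

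The one soft spot is the step you yourself flag: the claim that $\Ext^i_R(M,R)=0$ for $i>0$ plus the Iwanaga--Gorenstein hypothesis yields reflexivity of $M$, membership of $M^*=\Hom_R(M,R)$ in $\CM R^{\mathrm{op}}$, and exactness of the dualized coresolution. This is true, but it is not a formal consequence of $\Ext$-vanishing alone (over a general Noetherian ring it fails), and your sketch does not actually derive it. The shortest honest derivation is essentially the complex-level one: dualizing the projective resolution $P$ of $M$ gives a bounded-below complex $P^*$ of f.g.\ projective left modules which is a coresolution of $M^*$ (here you use $\Ext^i_R(M,R)=0$); splicing $P^*$ with a projective resolution of $M^*$ over $R^{\mathrm{op}}$ produces an acyclic complex of projectives, to which your own dimension-shifting argument (now using $\opn{idim}{}_RR<\infty$) applies and gives $M^*\in\CM R^{\mathrm{op}}$; dualizing once more and comparing with $P$ gives $M\cong M^{**}$ and the exact coresolution $0\to M\to Q_0^*\to Q_1^*\to\cdots$. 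If you fill in that step, your proof is complete and is a legitimate alternative to the paper's.
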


\begin{proof}
(1)
Let $M \in \CM R$, then
by Lemma \ref{st-h4} there is a complex $S \in \cat{K}^{+,\mrm{b}}(\pj R)$ such that
$S$ is quasi-isomorphic to $M$.
Let $C^{0}=\opn{Cok}(S^{-1} \to S^{0})$, then $C^{0}$ has a finite projective resolution and
$\Ext_{R}^i(C^{0}, R)=0$ for any $i >0$.
Therefore $C^{0}\in \pj R$, and we may assume $S:0 \to S^{0} \to S^{1} \to \cdots$.
Hence there is a complex $X \in \cat{C}(\pj R)$ such that 
$Z^0(X)\cong M$ and $\mrm{H}^i (X) =0 ~(i \in \mathbb{Z} )$.

(2)
It is easy to see that the functor $Z^0:\cat{K}^{\infty,\emptyset}(\pj R) \to \underline{\CM} R$
is a triangulated equivalence.
\end{proof}

\begin{defn}
Let $\mcal{B}$ an additive category.
We define the category $\Morph(\mcal{B})$ of morphisms in $\mcal{B}$ as follows.
\begin{itemize}
\item An object is a morphism $\alpha: X \to T$ in $\mcal{B}$. 
\item A morphism from $\alpha: X \to T$ to $\beta: X' \to T'$ is a pair $(f,g)$ of morphisms $f:X \to X'$ and $g:T \to T'$ such that
$g\alpha = \beta f$.
\end{itemize}
\end{defn}
When $\mcal{B}=\Mod R$ for a ring $R$, this category was studied systematically by Auslander \cite{A1} in the representation theory of algebras.
For an object $\alpha: X \to T$ in $\Morph(\Mod R)$, we have a $\opn{T}_2(R)$-module $X\oplus T$ where the multiplication is given by
\[(x,t){a\ b\choose 0\ c}:=(xa,\alpha xb+tc)\ \ \ (x\in X,\ t\in T,\ a,b,c\in R).\]
For a morphism $(f,g)$ from $\alpha: X \to T$ to $\beta: X' \to T'$, we have a morphism $f\oplus g : X\oplus T\to X'\oplus T'$ of $\opn{T}_2(R)$-modules
defined by
\[(f\oplus g)(x,t):=(fx,gt)\ \ \ (x\in X,\ t\in T).\]
We have the following fundamental observation \cite{A1}

\begin{prop}\label{T2}
For any ring $R$, we have an equivalence
\[\Morph(\Mod R)\to\Mod\opn{T}_2(R).\]
\end{prop}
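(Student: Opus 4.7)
The plan is to exhibit an explicit quasi-inverse $G: \Mod\opn{T}_2(R) \to \Morph(\Mod R)$ of $F$ via the Peirce decomposition of $\opn{T}_2(R)$. Writing $e_1 := {1\ 0\choose 0\ 0}$, $e_2 := {0\ 0\choose 0\ 1}$ and $e := {0\ 1\choose 0\ 0}$, these satisfy $1 = e_1 + e_2$, $e_i e_j = \delta_{ij}e_i$, $e_1 e = e = e\, e_2$, and $e_i\opn{T}_2(R) e_i \cong R$ for $i = 1,2$. Preliminarily I verify that $F$ is well defined: the displayed formula $(x,t){a\ b\choose 0\ c} = (xa, \alpha x\,b + tc)$ satisfies the associativity and unit axioms of a right $\opn{T}_2(R)$-action, and a pair $(f,g)$ with $g\alpha = \beta f$ induces an honest $\opn{T}_2(R)$-linear map $f\oplus g$; both are one-line unpackings of the formula.

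On objects I let $G$ send $M \in \Mod\opn{T}_2(R)$ to the $R$-linear map $\alpha_M : Me_1 \to Me_2$ given by right multiplication by $e$; here $Me_1$ and $Me_2$ acquire right $R$-module structures via the corner isomorphisms $e_i\opn{T}_2(R) e_i \cong R$, and $\alpha_M$ is $R$-linear precisely because $e_1 e = e = e\, e_2$. On morphisms, any $\opn{T}_2(R)$-homomorphism $\varphi : M \to N$ restricts to a pair $(\varphi|_{Me_1}, \varphi|_{Me_2})$ that intertwines $\alpha_M$ and $\alpha_N$ because $\varphi$ commutes with right multiplication by $e$. Additivity and functoriality of $G$ are immediate.

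Finally I shall produce the natural isomorphisms $GF \simeq \mrm{id}$ and $FG \simeq \mrm{id}$. For $(\alpha : X \to T) \in \Morph(\Mod R)$, inspecting the $\opn{T}_2(R)$-action on $F(\alpha) = X \oplus T$ at $e_1, e_2, e$ yields $F(\alpha)e_1 = X$, $F(\alpha)e_2 = T$, and $\alpha_{F(\alpha)} = \alpha$, so $GF(\alpha) = \alpha$ on the nose. For $M \in \Mod\opn{T}_2(R)$, the splitting $m \mapsto (me_1, me_2)$ gives an additive bijection $M \to Me_1 \oplus Me_2 = FG(M)$; that it is $\opn{T}_2(R)$-linear follows by a short computation using the decomposition ${a\ b\choose 0\ c} = ae_1 + be + ce_2$ together with the definition of the action on $FG(M)$, and naturality in $M$ is automatic from the idempotent splitting. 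The only potential obstacle is purely notational, namely keeping careful track of the right $R$-actions on $Me_1$ and $Me_2$ when comparing them with the $\opn{T}_2(R)$-action at corner elements; once the conventions are fixed the argument is entirely mechanical, and the proposition follows.
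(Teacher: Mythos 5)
Your proof is correct, and it is the standard Peirce-decomposition argument (with the idempotents $e_1,e_2$ and the element $e$ implementing the morphism as right multiplication); all the identities you invoke, such as $e_1e=e=ee_2$ and $\alpha_{F(\alpha)}=\alpha$, check out against the module structure $(x,t){a\ b\choose 0\ c}=(xa,\alpha xb+tc)$ defined in the paper. The paper itself gives no proof of Proposition \ref{T2} — it is quoted as a fundamental observation from Auslander \cite{A1} — so your write-up simply supplies the expected argument.
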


We remark the following observation, where the former statement is well-known (see e.g. \cite{Iw}).

\begin{lem}\label{Lem:IwG}
Let $R$ be an Iwanaga-Gorenstein ring. Then so is $\opn{T}_2(R)$.
\end{lem}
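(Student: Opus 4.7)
The plan is to verify the three defining properties of Iwanaga-Gorenstein for $\opn{T}_2(R)$: that it is Noetherian on both sides, and that $\opn{idim}\opn{T}_2(R)_{\opn{T}_2(R)}$ and $\opn{idim}{}_{\opn{T}_2(R)}\opn{T}_2(R)$ are both finite. Noetherianness is immediate, since $\opn{T}_2(R)$ is free of rank three as an $R$-module on each side, and $R$ is two-sided Noetherian by assumption.

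Using the equivalence $\Mod\opn{T}_2(R)\simeq\Morph(\Mod R)$ of Proposition~\ref{T2}, I decompose the regular right $\opn{T}_2(R)$-module as $P_1\oplus P_2$, where $P_1$ corresponds to $R\xrightarrow{1_R}R$ and $P_2$ to $0\to R$. My next step is to show that for any injective right $R$-module $I$, both $(I\xrightarrow{1}I)$ and $(I\to 0)$ are injective objects of $\Morph(\Mod R)$. This is a direct check: a morphism from $(X\xrightarrow{\alpha}T)$ into $(I\xrightarrow{1}I)$ is determined freely by an arbitrary map $T\to I$ (the other component is forced by composition with $\alpha$), and a morphism into $(I\to 0)$ by an arbitrary map $X\to I$. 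In each case the requisite lifting property across a monomorphism reduces to the injectivity of $I$ over $R$, since monomorphisms in $\Morph(\Mod R)$ are componentwise monomorphisms.

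Fixing an injective resolution $0\to R\to I^0\to\cdots\to I^n\to 0$ of $R_R$ of length $n=\opn{idim}R_R$, I apply the two exact functors $M\mapsto(M\xrightarrow{1}M)$ and $M\mapsto(M\to 0)$ from $\Mod R$ to $\Morph(\Mod R)$ to produce injective resolutions of $P_1=(R\to R)$ and of $(R\to 0)$, each of length $\le n$. Splicing these by way of the short exact sequence $0\to P_2\to P_1\to(R\to 0)\to 0$ (or equivalently, dimension-shifting via $\opn{Ext}^*_{\opn{T}_2(R)}$) then gives $\opn{idim}P_2\le n+1$, and therefore $\opn{idim}\opn{T}_2(R)_{\opn{T}_2(R)}<\infty$. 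The strictly analogous argument on the left, starting from an injective resolution of ${}_RR$ of length $\opn{idim}{}_RR$, yields $\opn{idim}{}_{\opn{T}_2(R)}\opn{T}_2(R)<\infty$. The only genuinely subtle point is the direct identification of the two families of injectives in $\Morph(\Mod R)$; once this is in hand, the remainder is a brief bookkeeping argument.
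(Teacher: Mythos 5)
Your argument is correct, but it cannot be said to follow the paper's proof, because the paper offers none: Lemma~\ref{Lem:IwG} is stated as a well-known fact with a pointer to Iwanaga's paper \cite{Iw}, and the only place where the authors actually compute anything of this kind is the neighbouring Lemma~\ref{Lem:2Ijres}, where they write down an explicit $\opn{T}_2(R)$-bimodule injective resolution with terms $\left(\begin{smallmatrix} V^{i} & V^{i} \\ V^{i-1}\oplus V^{i} & V^{i}\end{smallmatrix}\right)$ of length $n+1$ (under the extra hypothesis of a two-sided injective resolution). Your route is a clean one-sided substitute for that computation: identifying the regular module with $(R\xrightarrow{1}R)\oplus(0\to R)$ under Proposition~\ref{T2}, checking that $(I\xrightarrow{1}I)$ and $(I\to 0)$ are injective in $\Morph(\Mod R)$ (correct, since $\Hom(-,(I\xrightarrow{1}I))\cong\Hom_R(T,-)$-component and $\Hom(-,(I\to 0))\cong\Hom_R(X,I)$ are exact, monomorphisms being componentwise), and then dimension-shifting along $0\to(0\to R)\to(R\xrightarrow{1}R)\to(R\to 0)\to 0$ to get $\opn{idim}\le n+1$. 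This in fact recovers, summand by summand, the same terms as the paper's explicit resolution in Lemma~\ref{Lem:2Ijres}, but organizes the verification categorically rather than by matrix bookkeeping; what you lose relative to that lemma is only the bimodule (two-sided simultaneous) structure, which is not needed here. The passage to left modules is fine as stated (or one can invoke $\opn{T}_2(R)^{\mathrm{op}}\cong\opn{T}_2(R^{\mathrm{op}})$ and the symmetry of the Iwanaga-Gorenstein condition). I see no gap.
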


Now we introduce full subcategories of $\Morph(\Mod R)$ corresponding to the full subcategory $\CM\opn{T}_2(R)$ of $\Mod\opn{T}_2(R)$.

\begin{defn}\label{morph_s}
Let $R$ be an Iwanaga-Gorenstein ring.
\begin{itemize}
\item[(1)] Define the full subcategory $\Morph^{\mrm{e}}(\CM R)$ of $\Morph(\Mod R)$ consisting of objects $\alpha: X \to T$ which is surjective and $X,T\in\CM R$ \footnote{Notice that $\opn{Ker}\alpha\in\CM R$ holds automatically in this case.}.
\item[(2)] Define the full subcategory $\Morph^{\mrm{m}}(\CM R)$ of $\Morph(\Mod R)$ consisting of 
objects $\alpha: X \to T$ which is injective and $X,T,\opn{Cok}\alpha\in\CM R$.
\end{itemize}
\end{defn}

We have the following result.
\begin{prop}\label{CMT2}
\footnote{This is independently obtained by Li and Zhang \cite{LZ}. See also \cite{C}.}
\begin{itemize}
\item[(1)] The equivalence in Proposition \ref{T2} induces an equivalence
\[\Morph^{\mrm{m}}(\CM R)\simeq\CM\opn{T}_2(R).\]
\item[(2)] $\alpha\mapsto\opn{cok}\alpha$ and $\alpha\mapsto\opn{ker}\alpha$ give mutually quasi-inverse equivalences
\[\xymatrix{\Morph^{\mrm{m}}(\CM R)\ar@<-2ex>[r]^{\opn{cok}}&\Morph^{\mrm{e}}(\CM R)\ar@<-1ex>[l]_{\opn{ker}}.}\]
\end{itemize}
\end{prop}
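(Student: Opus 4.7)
My plan is to dispose of (2) directly and then reduce (1) to an Ext computation carried out via two natural exact functors $F_1, G \colon \Mod R \to \Mod\opn{T}_2(R)$. For (2), given $\alpha \colon X \hookrightarrow T$ in $\Morph^{\mrm{m}}(\CM R)$, the surjection $T \twoheadrightarrow \opn{cok}\alpha$ has source and target in $\CM R$ by hypothesis, so it lies in $\Morph^{\mrm{e}}(\CM R)$. Conversely, for $\beta \colon X \twoheadrightarrow T$ in $\Morph^{\mrm{e}}(\CM R)$, the long exact sequence of $\Ext^*_R(-, R)$ applied to $0 \to \opn{ker}\beta \to X \to T \to 0$ forces $\opn{ker}\beta \in \CM R$; the two assignments are mutually inverse by the usual isomorphism theorems.

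For (1), write $S = \opn{T}_2(R)$ with Peirce decomposition $S = P_1 \oplus P_2 = F_1(R) \oplus G(R)$, where $F_1(N) = (N \xrightarrow{1} N)$ and $G(N) = (0 \to N)$ are exact embeddings $\Mod R \to \Mod S$ sending projectives to projectives. For $M = (\alpha \colon X \to T)$ with $\alpha$ injective and $C = \opn{cok}\alpha$, the short exact sequence in $\Mod S$
\[
0 \to F_1(X) \xrightarrow{(1_X, \alpha)} M \xrightarrow{(0, \pi)} G(C) \to 0
\]
splits objectwise in $\Morph(\Mod R)$, so the horseshoe lemma assembles projective resolutions of $X$ and $C$ in $\Mod R$ into a projective resolution $P_\bullet \to M$ in $\Mod S$. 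Direct computation of $\Hom_S(P_j, P_1)$ and $\Hom_S(P_j, P_2)$ via the adjunctions, followed by the identification of the two resulting complexes with ones computing $\Ext^*_R(T, R)$ and $\Ext^*_R(C, R)$ respectively, yields
\[
\Ext^i_S(M, S) \simeq \Ext^i_R(T, R) \oplus \Ext^i_R(C, R) \quad\text{for } i \geq 0.
\]
Hence for $M$ with injective structure map, $M \in \CM S$ iff $T, C \in \CM R$, which (via the short exact sequence $0 \to X \to T \to C \to 0$ and closure of $\CM R$ under extensions and kernels of surjections) is equivalent to $M \in \Morph^{\mrm{m}}(\CM R)$.

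It remains to show that every $M \in \CM S$ already has injective structure map, so the above computation applies. The key observation is that for any surjection $P \twoheadrightarrow M$ from a projective $S$-module $P$, the kernel has injective structure map: indeed $P$ has the form $F_1(P') \oplus G(Q') = (P' \xrightarrow{\binom{1}{0}} P' \oplus Q')$, and a componentwise diagram chase shows that the induced structure map on the kernel is an inclusion of the form $k \mapsto (k, 0)$. Since $S$ is Iwanaga-Gorenstein by Lemma~\ref{Lem:IwG}, every $M \in \CM S$ coincides with the $0$-th kernel of a totally acyclic complex of projective $S$-modules; in particular $M$ is the kernel of a surjection from some projective, so $M$ itself has injective structure map. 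The main obstacle is invoking this "$M$ is a first syzygy" statement cleanly in the Iwanaga-Gorenstein setting; a hands-on alternative would instead exploit the short exact sequence $0 \to (\opn{ker}\alpha \to 0) \to M \to (\opn{im}\alpha \hookrightarrow T) \to 0$, compute $\Ext^i_S((\opn{ker}\alpha \to 0), S) \simeq \Ext^{i-1}_R(\opn{ker}\alpha, R)$ for $i \geq 1$ via the sequence $0 \to G(K) \to F_1(K) \to (K \to 0) \to 0$ with $K = \opn{ker}\alpha$, and propagate the forced Ext vanishing down through the finite injective dimension of $R$ on both sides to conclude $\opn{ker}\alpha = 0$.
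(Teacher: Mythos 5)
Your argument is correct, but for part (1) it takes a genuinely different route from the paper's. The paper stays entirely inside the world of acyclic complexes of projectives: for the forward direction it takes a projective coresolution of a Cohen--Macaulay $\opn{T}_2(R)$-module (via Lemma \ref{Lem:IwG} and Proposition \ref{cm}) and reads off componentwise that $\alpha$ is a monomorphism with $X$, $T$, $\opn{Cok}\alpha$ in $\CM R$, using that projective $\opn{T}_2(R)$-modules are split monomorphisms of projective $R$-modules; for the converse it explicitly splices acyclic complexes for $X$ and $\opn{Cok}\alpha$ into an acyclic complex of projective $\opn{T}_2(R)$-modules with $Z^0=\alpha$. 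You instead prove the quantitative statement $\Ext^i_{\opn{T}_2(R)}(M,\opn{T}_2(R))\cong\Ext^i_R(T,R)\oplus\Ext^i_R(\opn{cok}\alpha,R)$ for $M$ with monic structure map, via the horseshoe resolution attached to $0\to F_1(X)\to M\to G(\opn{cok}\alpha)\to 0$ together with the identifications $\Hom_S(-,P_1)\cong\Hom_R((-)_2,R)$ and $\Hom_S(-,P_2)\cong\Hom_R(\opn{cok}(-),R)$; this settles both implications at once for such $M$ and yields strictly more than the proposition asks (an explicit Ext formula, which the paper's proof does not provide). The one step where you and the paper necessarily converge is the reduction to monic structure maps: both invoke Lemma \ref{Lem:IwG} and Proposition \ref{cm}(1) to realize $M\in\CM\opn{T}_2(R)$ as $Z^0$ of an acyclic complex of projectives, hence as a submodule of a projective, whose structure map is then forced to be injective. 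Your syzygy argument for this step is complete as stated (indeed any submodule of an object with monic structure map has monic structure map, so you do not even need the precise form $F_1(P')\oplus G(Q')$ of projectives); the ``hands-on alternative'' sketched in your final sentence is not needed and, as written, would still require identifying the connecting maps in the relevant long exact sequence. Part (2) is immediate in both treatments, and your long-exact-sequence verification that $\opn{ker}\beta\in\CM R$ is exactly the content of the footnote to Definition \ref{morph_s}.
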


\begin{proof}
(1) Under the equivalence $\Mod\opn{T}_2(R)\to\Morph(\Mod R)$, any projective $\opn{T}_2(R)$-module corresponds to 
a split monomorphism $\alpha:X\to T$ of projective $R$-modules.

Assume that an object $\alpha:X\to T$ in $\Morph(\Mod R)$ corresponds to a Cohen-Macaulay $\opn{T}_2(R)$-module.
By Lemmas \ref{Lem:IwG} and \ref{cm}, there exists a commutative diagram
\[\begin{array}{ccccccc}
0\to&X&\to&X^0&\to&X^1&\to\cdots\\
&\downarrow^{\alpha}&&\downarrow^{\alpha^0}&&\downarrow^{\alpha^1}&\\
0\to&M&\to&M^0&\to&M^1&\to\cdots
\end{array}\]
of exact sequences such that $\alpha^i:X^i\to T^i$ corresponds to a projective $\opn{T}_2(R)$-module.
Since $\alpha^i$ is a split monomorphism of projective $R$-modules by the remark above,
we have that $\alpha$ is injective and $X,M\in\CM R$. Moreover we have $\opn{Cok}\alpha\in\CM R$
since we have an exact sequence
\[0\to\opn{Cok}\alpha\to\opn{Cok}\alpha^0\to\opn{Cok}\alpha^1\to\cdots\]
with projective $R$-modules $\opn{Cok}\alpha^i$.

Conversely let $\alpha:X\to T$ be an object in $\Morph^{\mrm{m}}(\CM R)$.
Since $X,\opn{Cok}\alpha\in\CM R$, there exist acyclic complexes $X_1$ and $X_3$, 
of which all cocycle are in $\CM R$, such that $\mrm{Z}^0(X_1)=X$ and $\mrm{Z}^0(X_3)=\opn{Cok}\alpha$.
Then we can construct a commutative diagram
\[\begin{array}{ccccccc}
&0&&0&&0&\\
&\downarrow&&\downarrow&&\downarrow&\\
0\to&X&\to&X^{0}_1&\to&X^{1}_1&\to\cdots\\
&\downarrow^\alpha&&\downarrow^{\alpha^0}&&\downarrow^{\alpha^1}&\\
0\to&T&\to&X^{0}_1\oplus X^{0}_3&\to&X^{1}_1\oplus X^{1}_3&\to\cdots\\
&\downarrow&&\downarrow&&\downarrow&\\
0\to&\opn{Cok}\alpha&\to&X^{0}_3&\to&X^{1}_3&\to\cdots\\
&\downarrow&&\downarrow&&\downarrow&\\
&0&&0&&0&\\
\end{array}\]
of exact sequences such that each vertical sequences are split exact.
Since each $\alpha^i$ corresponds to a projective $\opn{T}_2(R)$-module,
we have that $\alpha:X\to T$ in $\Morph(\Mod R)$ corresponds to a Cohen-Macaulay $\opn{T}_2(R)$-module.

(2) This is clear.
\end{proof}

We introduce subcategories of $\Morph^{\mrm{e}}(\CM R)$ to study $\CM\opn{T}_2(R)$.

\begin{defn} \label{SupCM}
Let $\underline{\Morph}^{\mrm{e}}(\CM R)$ be the stable category of $\Morph^{\mrm{e}}(\CM R)$.
We denote by $\underline{\Morph}^{10}(\CM R)$ (resp., $\underline{\Morph}^{11}(\CM R)$, $\underline{\Morph}^{01}(\CM R)$)
the full subcategory of $\underline{\Morph}^{\mrm{e}}(\CM R)$ consisting of objects
corresponding to the objects of the form $X \to 0$ (resp., $T \xarr{1} T$,
$P \to T$ with $P$ being projective) in $\Morph^{\mrm{e}}(\CM R)$.
\end{defn}

We have the following main result in this section.

\begin{thm} \label{st-st2}
Let $R$ be an Iwanaga-Gorenstein ring.
Then we have the following.
\begin{itemize}
\item[(1)] $(\underline{\Morph}^{01}(\CM R),\underline{\Morph}^{10}(\CM R),
\underline{\Morph}^{11}(\CM R))$ is a triangle of recollements in $\underline{\Morph}^{\mrm{e}}(\CM R)$.
\item[(2)] There is a recollement
\[\xymatrix{
\underline{\Morph}^{10}(\CM R)\ar@<-1ex>[r]^{i_{*}}
& \hspace{12pt}\underline{\Morph}^{\mrm{e}}(\CM R)\hspace{12pt}
\ar@<-2ex>[l]_{i^{*}} \ar@<2ex>[l]^{i^{!}} \ar@<-1ex>[r]^{j^{*}} 
& \hspace{6pt}\underline{\Morph}^{\mrm{e}}(\CM R)/\underline{\Morph}^{10}(\CM R)
\ar@<-2ex>[l]_{j_{!}} \ar@<2ex>[l]^{j_{*}}
}\]
such that the essential image $\opn{Im}j_{!}$ (resp., $\opn{Im}j_{*}$)
is $\underline{\Morph}^{01}(\CM R)$ (resp., $\underline{\Morph}^{11}(\CM R)$).
\item[(3)] We have equivalences 
\[\underline{\CM} R \simeq \underline{\Morph}^{10}(\CM R)
\simeq \underline{\Morph}^{11}(\CM R)
\simeq \underline{\Morph}^{01}(\CM R).\]
\end{itemize}
\end{thm}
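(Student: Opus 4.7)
The plan is to work inside the Frobenius exact category $\Morph^{\mrm{e}}(\CM R)$, whose structure is inherited from $\CM\opn{T}_2(R)$ via Proposition~\ref{CMT2} and Lemma~\ref{Lem:IwG}. First I identify the projective-injective objects by tracing the indecomposable projective $\opn{T}_2(R)$-modules $e_{11}\opn{T}_2(R)$ and $e_{22}\opn{T}_2(R)$ through Proposition~\ref{T2} and Proposition~\ref{CMT2}(2); the outcome is that they are precisely the split projections $(Q \oplus Q' \twoheadrightarrow Q')$ with $Q, Q' \in \pj R$, equivalently direct sums of objects of the two forms $(P \to 0)$ and $(P \xrightarrow{1} P)$ with $P \in \pj R$. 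In particular every projective-injective is a direct sum of objects that represent classes in $\underline{\Morph}^{10}(\CM R)$ and $\underline{\Morph}^{11}(\CM R)$.

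For part~(1), the Hom-vanishing in each of the three consecutive pairs is a direct factorization through one of these projective-injectives: a morphism $(P \to T) \to (X' \to 0)$ factors through $(P \to 0)$; a morphism $(X \to 0) \to (T \xrightarrow{1} T)$ is already zero in $\Morph(\Mod R)$ by the commutativity of the defining square; and a morphism $(T \xrightarrow{1} T) \to (P \to T')$ (necessarily of the shape $(f, \alpha' f)$) factors as $(f,f)$ through $(P \xrightarrow{1} P)$ followed by $(1_P, \alpha')$. For the triangle axiom, given $M = (X \xrightarrow{\alpha} T)$, I produce three short exact sequences in $\Morph^{\mrm{e}}(\CM R)$: choosing a projective surjection $p : P' \twoheadrightarrow X$ with $\ker p \in \CM R$ gives $0 \to (\ker p \to 0) \to (P' \xrightarrow{\alpha p} T) \to M \to 0$, which after rotation realizes $\underline{\Morph}^{01} \to M \to \underline{\Morph}^{10}$; the tautological sequence $0 \to (\ker\alpha \to 0) \to M \to (T \xrightarrow{1} T) \to 0$ realizes $\underline{\Morph}^{10} \to M \to \underline{\Morph}^{11}$; and choosing an injective hull $\phi : X \hookrightarrow Q_1$ of $X$ in $\CM R$ and forming the pushout $P := Q_1 \oplus_X T$ yields $0 \to M \to (Q_1 \twoheadrightarrow P) \to (Q_1/X \xrightarrow{1} Q_1/X) \to 0$ (using $P/T \simeq Q_1/X$), whose rotation realizes $\underline{\Morph}^{11} \to M \to \underline{\Morph}^{01}$. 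Separately one checks that each of the three subcategories is $\Sigma$-closed by a direct computation of shifts via embeddings into projective-injectives.

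Part~(2) is then immediate from (1) by Proposition~\ref{added}. Moreover, Proposition~\ref{added} tells us that the three subcategories of a triangle of recollements are automatically triangle equivalent, so for part~(3) it suffices to exhibit one equivalence with $\underline{\CM} R$. I take the functor $X \mapsto (X \to 0)$ from $\CM R$ to $\Morph^{10}(\CM R)$. It is essentially surjective by construction, and descends to $\underline{\CM} R \to \underline{\Morph}^{10}(\CM R)$ because it sends projective $R$-modules to projective-injective objects $(P \to 0)$ of $\Morph^{\mrm{e}}(\CM R)$. Fully-faithfulness then reduces to a short calculation: a morphism $(f, 0) : (X \to 0) \to (X' \to 0)$ factors through a projective-injective $(Q \oplus Q' \twoheadrightarrow Q')$ of $\Morph^{\mrm{e}}(\CM R)$ if and only if $f : X \to X'$ factors through a projective $R$-module.

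The main obstacle I expect is the third short exact sequence above, where the pushout along an injective hull in the Frobenius category $\CM R$ must be handled carefully in order to identify the middle term $(Q_1 \twoheadrightarrow P)$ as an object of $\Morph^{01}$ (requiring that $Q_1 \to P$ is surjective and that $P \in \CM R$) and the cokernel as the identity morphism $(Q_1/X \xrightarrow{1} Q_1/X)$; both identifications rely on the fact that injective hulls of Cohen-Macaulay modules over an Iwanaga-Gorenstein ring may be chosen in $\pj R$, together with the standard pushout relation $P/T \simeq Q_1/X$.
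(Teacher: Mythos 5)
Your proposal is correct and follows essentially the same route as the paper: the Hom-vanishing via factorization through the projective-injective objects $(P\to 0)$ and $(P\xrightarrow{1}P)$, the same three short exact sequences (the tautological one onto $1_T$, the one built from a projective presentation of $X$, and the pushout along an admissible mono $X\hookrightarrow Q_1$ with $Q_1\in\pj R$) realizing the three stable t-structures, and then Proposition~\ref{added} together with the equivalence $\underline{\CM}R\simeq\underline{\Morph}^{10}(\CM R)$. You merely make explicit several points the paper labels ``clear'' (the factorizations proving Hom-vanishing, the identification of the projective-injectives of $\Morph^{\mrm{e}}(\CM R)$, and the fully-faithfulness of $X\mapsto(X\to 0)$), all of which check out.
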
 

\begin{proof}
It is clear that $\Hom_{\underline{\Morph}^{\mrm{e}}(\CM R)}
(\underline{\Morph}^{01}(\CM R),\underline{\Morph}^{10}(\CM R))=0$,\\
$\Hom_{\underline{\Morph}^{\mrm{e}}(\CM R)}
(\underline{\Morph}^{10}(\CM R),\underline{\Morph}^{11}(\CM R))=0$ and\\
$\Hom_{\underline{\Morph}^{\mrm{e}}(\CM R)}
(\underline{\Morph}^{11}(\CM R),\underline{\Morph}^{01}(\CM R))=0$.
Given any object $(\alpha:X \to T) \in\Morph^{\mrm{e}}(\CM R)$, 
we have an exact sequence in $\Morph^{\mrm{e}}(\CM R)$:
\[\begin{CD}
0 @>>> \opn{Ker}\alpha @>>> X @>\alpha>> T @>>> 0 \\
@. @VV0_{\opn{Ker}\alpha} V @VV\alpha V @VV1_T V  \\
0 @>>> 0 @>>> T @>>> T @>>> 0
\end{CD}\]
Therefore we have the triangle 
$0_{\opn{Ker}\alpha} \to \alpha \to 1_T \to \Sigma 0_{\opn{Ker}\alpha}$
in $\underline{\Morph}^{\mrm{e}}(\CM R)$.
Let $0 \to X \to P \to \Sigma X \to 0$ and 
$0 \to \Sigma^{-1}X \to Q \to X \to 0$ 
be exact sequences in $\fmod R$ such that
$P, Q$ are finitely generated projective and $\Sigma^{-1}X, \Sigma X \in \CM R$.
Then we have morphisms of exact sequences:
\[\begin{CD}
0 @>>> \Sigma^{-1}X @>>> Q @>>>  X @>>> 0 \\
@. @VV0_{\Sigma^{-1} X}  V @VV\sigma V @VV\alpha V  \\
0 @>>> 0 @>>> T @>>>  T @>>> 0
\end{CD}\]
\[\begin{CD}
0 @>>> X @>>> P @>>> \Sigma X @>>> 0 \\
@. @VV\alpha V @VV\tau V @VV1_{\Sigma X} V  \\
0 @>>> T @>>> M @>>> \Sigma X @>>> 0
\end{CD}\]
Since $0_{\Sigma^{-1} X}\simeq  \Sigma^{-1} 0_{X}$ and $1_{\Sigma X}\simeq  \Sigma 1_{X}$ 
in $\underline{\Morph}^{\mrm{e}}(\CM R)$,
we have the triangles
$\sigma \to \alpha \to 0_{X} \to \Sigma\sigma $ and
$1_{X} \to \alpha \to \tau \to \Sigma 1_{X}$ in $\underline{\Morph}^{\mrm{e}}(\CM R)$.
It is easy to see that $\underline{\Morph}^{10}(\CM R)$ is equivalent to $\underline{\CM} R$,
then we have the statement by Propositions \ref{added} and \ref{p20Apr30}.
\end{proof}

\section{Triangle equivalence between recollements}\label{trieqrecoll}

We prove a triangle equivalence between $\underline{\CM}\opn{T}_2(R)$ and 
$(\cat{K}^{\infty,\mrm{b}}/\cat{K}^{\mrm{b}})(\pj {R})$. 
The rough sketch of the proof is the following. 
By Proposition \ref{CMT2}, we show there is a triangle equivalence from 
$\underline{\Morph}^{\mrm{e}}(\CM R)$ to $(\cat{K}^{\infty, \emptyset}/\cat{K}^{\mrm{b}})(\pj {R})$. 
We begin with building a functor 
$F: \Morph^{\mrm{e}}(\CM R) \to \cat{K}^{\infty, \emptyset}(\pj {R})$. 
Next we see that $F$ induces a triangle functor 
$\underline{F}: \underline{\Morph}^{\mrm{e}}(\CM R) \to (\cat{K}^{\infty, \emptyset}/\cat{K}^{\mrm{b}})(\pj {R})$. 
The point is that both domain and target categories have triangles of recollements and 
$\underline{F}$ sends one triangle of recollements to the other. 
So if $\underline{F}$ restricts to an equivalence between 
some composing subcategories of these triangles of recollements, 
then $\underline{F}$ itself is an equivalence from Proposition \ref{p20Apr30}.

Let $\alpha : X_\alpha \to T_\alpha$ be an object of $\Morph^{\mrm{e}}(\CM R)$ and let 
$F_{X_\alpha}$ and $F_{T_\alpha}$ be acyclic complexes of finitely generated projective $R$-modules 
such that ${\rm H}^0 ( \tau _{\le 0} F_{X_\alpha} ) = {X_\alpha}$ and 
${\rm H}^0 ( \tau _{\le 0} F_{T_\alpha} )= {T_\alpha}$. 
Make a complex $F_\alpha$ as 
\[ \tau _{\le 0} F_{\alpha} = \tau _{\le 0} F_{X_\alpha}, ~
\tau _{\ge 1} F_{\alpha} = \tau _{\ge 1} F_{T_\alpha}, ~
d_{F_\alpha} = \epsilon _{T_\alpha} \alpha \rho _{X_\alpha}  \]  
where $\rho _{X_\alpha} :F_{X_\alpha}^0 \to {X_\alpha}$ ($\epsilon _{T_\alpha} :{T_\alpha} \to F_{T_\alpha}^1$ ) 
is a natural surjective (resp. injective) map. 

\begin{lem}\label{K8Jul55}
1)~For a morphism $f\in \Hom_{\Morph^{\mrm{e}}(\CM R)}(\alpha , \beta )$, there is a chain map 
$F_f : F_\alpha \to F_\beta $ such that 
$({\rm H}^0 ( \tau _{\le 0} F_{f} ), {\rm H}^1 ( \tau _{\ge 1} F_{f} ) ) =f$. \\
2)~For morphisms $f \in\Hom_{\Morph^{\mrm{e}}(\CM R)}(\alpha , \beta )$ and $g \in \Hom_{\Morph^{\mrm{e}}(\CM R)}(\beta , \gamma )$, 
we may choose chain maps $F_f$, $F_g$ and $F_{gf}$ as $F_{gf} =F_g F_f$. \\
3)~An exact sequence 
$0\to \alpha \stackrel{f}{\to} \beta \stackrel{g}{\to} \gamma \to 0$ in $\Morph^{\mrm{e}}(\CM R)$ induces an exact sequence 
$0\to F_\alpha \stackrel{F_f}{\to}  F_\beta \stackrel{F_g}{\to} F_\gamma \to 0$ 
by the choice of complexes and chain maps. \\

\end{lem}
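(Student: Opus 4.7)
The plan is to reduce all three statements to the standard comparison and horseshoe lemmas for complete projective resolutions, which are available since by Proposition~\ref{cm}(1) the complexes $F_{X_\alpha}$ and $F_{T_\alpha}$ are precisely complete projective resolutions of the Cohen-Macaulay $R$-modules $X_\alpha$ and $T_\alpha$. Because $F_\alpha$ is glued from the ``lower half'' of $F_{X_\alpha}$ and the ``upper half'' of $F_{T_\alpha}$, the usual lifting machinery applies separately to each piece and one only needs to verify that everything matches at the junction in degrees $0$ and $1$.

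For (1), given a morphism $f=(f_X, f_T) : \alpha \to \beta$, first apply the comparison theorem to lift $f_X$ to a chain map $\tilde f_X : F_{X_\alpha} \to F_{X_\beta}$ and $f_T$ to $\tilde f_T : F_{T_\alpha} \to F_{T_\beta}$. Then set $F_f$ equal to $\tilde f_X$ in degrees $\le 0$ and to $\tilde f_T$ in degrees $\ge 1$. The only compatibility that is not automatic is commutativity of the square
\[\tilde f_T^{\,1} \circ \epsilon_{T_\alpha} \alpha \rho_{X_\alpha} = \epsilon_{T_\beta} \beta \rho_{X_\beta} \circ \tilde f_X^{\,0}.\]
This reduces to the three identities $\rho_{X_\beta} \tilde f_X^{\,0} = f_X \rho_{X_\alpha}$, $f_T \alpha = \beta f_X$, and $\tilde f_T^{\,1} \epsilon_{T_\alpha} = \epsilon_{T_\beta} f_T$: the outer two follow from $\tilde f_X, \tilde f_T$ being chain maps inducing $f_X, f_T$ at the relevant (co)cycle degrees, and the middle one is the definition of a morphism in $\Morph^{\mrm{e}}(\CM R)$.

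For (2), once $F_f$ and $F_g$ have been constructed as in (1), simply define $F_{gf} := F_g F_f$. This is a chain map by functoriality of composition, and its induced maps on $\mrm{H}^0(\tau_{\le 0})$ and $\mrm{H}^1(\tau_{\ge 1})$ are $g_X f_X$ and $g_T f_T$ respectively, that is, the two components of $gf$.

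For (3), a short exact sequence $0 \to \alpha \to \beta \to \gamma \to 0$ in $\Morph^{\mrm{e}}(\CM R)$ induces short exact sequences $0 \to X_\alpha \to X_\beta \to X_\gamma \to 0$ and $0 \to T_\alpha \to T_\beta \to T_\gamma \to 0$ of Cohen-Macaulay modules. Apply the horseshoe lemma to complete resolutions to produce $F_{X_\beta}$ and $F_{T_\beta}$ whose degree-$i$ terms are the direct sums $F_{X_\alpha}^i \oplus F_{X_\gamma}^i$ and $F_{T_\alpha}^i \oplus F_{T_\gamma}^i$ with upper triangular differentials, arranging moreover that $\rho_{X_\beta}$ and $\epsilon_{T_\beta}$ are assembled in the obvious block form from $\rho_{X_\alpha}, \rho_{X_\gamma}$ and $\epsilon_{T_\alpha}, \epsilon_{T_\gamma}$. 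Then $F_\beta^i = F_\alpha^i \oplus F_\gamma^i$ degreewise, the canonical injection and projection furnish $F_f$ and $F_g$ lifting $f$ and $g$ in the sense of (1), and the sequence $0 \to F_\alpha \to F_\beta \to F_\gamma \to 0$ is split exact in every degree. The main obstacle will be coordinating the two applications of the horseshoe lemma so that the junction differential $d_{F_\beta}^{\,0} = \epsilon_{T_\beta} \beta \rho_{X_\beta}$ actually takes the correct block-upper-triangular form with $d_{F_\alpha}^{\,0}$ and $d_{F_\gamma}^{\,0}$ on the diagonal; this is the delicate point, because unlike in the separate $X$- and $T$-resolutions it mixes both horseshoe choices through the connecting map $\beta$.
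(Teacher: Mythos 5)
Your proposal follows essentially the same route as the paper: in (1) glue lifts of $f_X$ and $f_T$ on the two halves and check the single square at the junction via the three identities you list (the paper's proof is exactly the chain of equalities $d_{F_\beta}^0 f_X^0=\epsilon_{T_\beta}\beta\rho_{X_\beta}f_X^0=\epsilon_{T_\beta}\beta f_X\rho_{X_\alpha}=\epsilon_{T_\beta}f_T\alpha\rho_{X_\alpha}=f_T^1\epsilon_{T_\alpha}\alpha\rho_{X_\alpha}=f_T^1 d_{F_\alpha}^0$); (2) is by construction; and (3) uses the horseshoe lemma on the $X$-row and its dual on the $T$-row. The only substantive difference is that you leave open, as a ``delicate point,'' whether the junction differential $d_{F_\beta}^0=\epsilon_{T_\beta}\beta\rho_{X_\beta}$ is compatible with the two horseshoe splittings. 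This is not actually delicate, and it is closed by the very computation you already did in (1). Writing $\iota:F_\alpha\to F_\beta$ and $\pi:F_\beta\to F_\gamma$ for the degreewise split inclusion and projection, the block-upper-triangular form of $d_{F_\beta}^0$ with $d_{F_\alpha}^0$ and $d_{F_\gamma}^0$ on the diagonal is equivalent to $d_{F_\beta}^0\iota^0=\iota^1 d_{F_\alpha}^0$ and $\pi^1 d_{F_\beta}^0=d_{F_\gamma}^0\pi^0$. The first holds because the horseshoe lemma gives $\rho_{X_\beta}\iota^0=f_X\rho_{X_\alpha}$ and its dual gives $\epsilon_{T_\beta}f_T=\iota^1\epsilon_{T_\alpha}$, so $d_{F_\beta}^0\iota^0=\epsilon_{T_\beta}\beta f_X\rho_{X_\alpha}=\epsilon_{T_\beta}f_T\alpha\rho_{X_\alpha}=\iota^1\epsilon_{T_\alpha}\alpha\rho_{X_\alpha}=\iota^1 d_{F_\alpha}^0$, using only that $f=(f_X,f_T)$ is a morphism of $\Morph^{\mrm{e}}(\CM R)$; the second is the same computation for $g$. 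In other words, no coordination between the two horseshoe choices beyond the commutativity of their augmentation squares is needed, and that commutativity is part of the (dual) horseshoe lemma's output. The paper phrases this by stacking the four short exact rows into one commutative diagram and taking the composite of the vertical maps as the junction differential; with that remark added, your proof is complete and matches the paper's.
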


\begin{proof}
1)~Since projective $R$-modules are projective-injective objects in 
$\CM R$, linear maps $f_X : X_\alpha \to X_\beta$ and  
${f_T} : {T_\alpha} \to {T_\beta}$ induce 
chain maps between complexes 
$ {f_X}^\bullet : \tau _{\le 0} F_\alpha \to \tau _{\le 0} F_\beta$ and 
$ {f_T}^{\bullet} : {\tau _{\ge 1} F_\alpha } \to {\tau _{\ge 1} F_\beta} $ 
respectively. Putting $F_f : F_\alpha \to F_\beta$ as 
\[ F_f^i = \begin{cases}
f_X^i & (i\le 0)\cr
{f_T^{i} } &(i\ge 1 )\cr
\end{cases} , \] 
we see $F_f$ is a chain map because 
$d_{F_\beta}^0  f_X ^0 = (\epsilon_{T_\beta} \beta \rho_{X_\beta} )  f_X ^0 
=\epsilon_{T_\beta} \beta (f_X \rho_{X_\alpha} )   
=\epsilon_{T_\beta} (f_T \alpha ) \rho_{X_\alpha}    
=({f_T^{1}} \epsilon_{T_\alpha} ) \alpha  \rho_{X_\alpha}
={f_T^{1} } d_{F_\alpha}^0 $. 

2)~It is obvious from the construction of $F_f$ and $F_g$ as in 1). 

3)~The given exact sequence implies a commutative diagram with exact rows in 
$\CM R$: 
\[ \begin{matrix}0&\to&X_ \alpha&\stackrel{f_X}{\to}&X_\beta&\stackrel{g_X}{\to}&X_\gamma&\to&0\cr 
&&\mapdown{\alpha}&&\mapdown{\beta}&&\mapdown{\gamma}&&\cr
0&\to&T_ \alpha&\stackrel{f_T}{\to}&T_\beta&\stackrel{g_T}{\to}&T_\gamma&\to&0\cr \end{matrix} \] 
The upper row induces an exact sequence of projective resolutions 
\[ \begin{matrix}
0&\to&\tau _{\le 0} F_ \alpha&{\to}&\tau _{\le 0} F_\beta&{\to}&\tau _{\le 0} F_\gamma&\to&0, \cr 
\end{matrix} \]
and the lower row induces an exact sequence of projective coresolutions
as well: 
\[ \begin{matrix}
0&\to&{\tau _{\ge 1} F_ \alpha }&{\to}&{\tau _{\ge 1} F_ \beta }&{\to}&
{\tau _{\ge 1} F_ \gamma }&\to&0\cr 
\end{matrix} \] 
Thus we obtain a commutative diagram with exact rows in 
$\cat{C}(\fmod R)$
\[ \begin{matrix}
0&\to&\tau _{\le 0} F_ \alpha&{\to}&\tau _{\le 0} F_\beta&{\to}&\tau _{\le 0} F_\gamma&\to&0\cr 
&&\mapdown{\rho _{X_\alpha}}&&\mapdown{\rho _{X_\beta}}&&\mapdown{\rho _{X_\gamma}}&&\cr
0&\to&X_ \alpha&\stackrel{f_X}{\to}&X_\beta&\stackrel{g_X}{\to}&X_\gamma&\to&0\cr 
&&\mapdown{\alpha}&&\mapdown{\beta}&&\mapdown{\gamma}&&\cr
0&\to&T_ \alpha&\stackrel{f_T}{\to}&T_\beta&\stackrel{g_T}{\to}&T_\gamma&\to&0\cr 
&&\mapdown{\epsilon _{T_\alpha}}&&\mapdown{\epsilon _{T_\beta}}&&\mapdown{\epsilon _{T_\gamma}}&&\cr
0&\to&\tau _{\ge 1} F_ \alpha&{\to}&\tau _{\ge 1} F_\beta&{\to}&\tau _{\ge 1} F_\gamma&\to&0\cr 
\end{matrix} \]  
whose composite is the desired sequence. 

\end {proof}

\begin{lem}\label{F}
The operation $F$ gives a full functor 
$\Morph^{\mrm{e}}(\CM R) \to {\sf K}^{\infty,\mrm{b}}(\pj {R})$.
\end{lem}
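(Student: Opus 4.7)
The plan is to establish three things: (i) each $F_\alpha$ lies in $\cat{K}^{\infty,\mrm{b}}(\pj R)$, (ii) $F$ is a well-defined functor into the homotopy category, and (iii) $F$ is full. Claim (i) is routine: using acyclicity of $F_{X_\alpha}$ and $F_{T_\alpha}$ together with the surjectivity of $\alpha$ and of $\rho_{X_\alpha}$ and the injectivity of $\epsilon_{T_\alpha}$, a direct computation yields $H^0(F_\alpha)\cong \ker\alpha\in\CM R$ and $H^i(F_\alpha)=0$ for $i\ne 0$, so $F_\alpha$ has bounded cohomology.

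For (ii), Lemma~\ref{K8Jul55} already supplies a chain map $F_f$ for every $f$ and allows composition to be arranged on the nose; what remains is uniqueness up to chain homotopy. Given two lifts $F_f,F_f'$, set $g=F_f-F_f'$. Then $\tau_{\le 0}g$ is a chain map between the projective resolutions of $X_\alpha,X_\beta$ inducing $0$ on $H^0$, so the comparison theorem gives a null-homotopy $h^i$ for $i\le 0$ satisfying $g^0=d^{-1}h^0$ in the truncation; dually, $\tau_{\ge 1}g$ yields $\tilde h^i$ for $i\ge 2$ with $g^1=\tilde h^2 d^1$. The delicate step is gluing these across the boundary, where $d^0_{F_\alpha}\ne 0$ in the full complex. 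The key observation is that simply setting $h^1:=0$ works: the full chain-homotopy relations at degrees $0$ and $1$ then reduce exactly to the truncated identities that we already have, so $(\dots,h^{-1},h^0,0,\tilde h^2,\tilde h^3,\dots)$ is a chain homotopy $F_f\sim F_f'$.

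For (iii), let $\phi\colon F_\alpha\to F_\beta$ be any chain map. Read off $f_X\colon X_\alpha\to X_\beta$ and $f_T\colon T_\alpha\to T_\beta$ from the maps induced by $\tau_{\le 0}\phi$ and $\tau_{\ge 1}\phi$ on $H^0$ and $H^1$ respectively; these satisfy $\rho_{X_\beta}\phi^0=f_X\rho_{X_\alpha}$ and $\phi^1\epsilon_{T_\alpha}=\epsilon_{T_\beta}f_T$. The chain relation $\phi^1 d^0_{F_\alpha}=d^0_{F_\beta}\phi^0$ then expands into $\epsilon_{T_\beta}(f_T\alpha-\beta f_X)\rho_{X_\alpha}=0$; cancelling the injection $\epsilon_{T_\beta}$ and the surjection $\rho_{X_\alpha}$ gives $f_T\alpha=\beta f_X$, so $(f_X,f_T)\colon\alpha\to\beta$ is a morphism in $\Morph^{\mrm{e}}(\CM R)$. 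Finally, $\phi$ and $F_{(f_X,f_T)}$ are two chain maps whose halves both lift $f_X$ (resp.\ $f_T$) between the same (co)resolutions, so the gluing argument from (ii) gives $\phi\simeq F_{(f_X,f_T)}$ in $\cat{K}$, establishing fullness. The main technical obstacle throughout is the boundary-degree gluing of the half-homotopies, handled uniformly by the $h^1=0$ trick.
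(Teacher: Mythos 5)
Your proof is correct and follows essentially the same route as the paper's: uniqueness of the lift $F_f$ up to homotopy via the two truncations, and fullness by reading off $(f_X,f_T)$ from a chain map and cancelling the injection $\epsilon_{T_\beta}$ and surjection $\rho_{X_\alpha}$. The only difference is that you make explicit (via the $h^1=0$ gluing) the step the paper compresses into ``$\tau_{\le 0}\varphi_1$ and $\tau_{\le 0}\varphi_2$ are homotopic, hence so are $\varphi_1$ and $\varphi_2$,'' and that check is carried out correctly.
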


\begin{proof}
We shall see first that $F_\alpha$ is uniquely determined 
as an object of ${\sf K}^{\infty ,\mrm{b}}(\pj {R})$ by $\alpha$, 
independent of the choice of $F_{X_\alpha}$ and $F_{T_\alpha}$.
Suppose other acyclic complexes of projective modules 
$F'_{X_\alpha}$ and $F' _{T_\alpha}$ also 
have the property  
${\rm H}^0 (\tau _{\le 0} F'_{X_\alpha} )= {X_\alpha}$ and 
${\rm H}^0 (\tau _{\le 0} F'_{T_\alpha} )= {T_\alpha}$. 
Both $\tau _{\le 0} F_{X_\alpha} $ and $\tau _{\le 0} F'_{X_\alpha} $ are 
projective resolutions of ${X_\alpha}$, hence are isomorphic in ${\sf K}^{-, \mrm{b} }(\pj {R})$. 
Similarly $\tau _{\ge 1} F_{T_\alpha} $ and $\tau _{\ge 1} F'_{T_\alpha} $ are 
isomorphic in ${\sf K}^{+, \mrm{b}}(\pj {R})$.
Therefore, in ${\sf K}^{\infty ,\mrm{b}}(\pj {R})$, the complex $F_\alpha$ is isomorphic to 
the complex $F' _\alpha$ obtained from  $F'_{X_\alpha}$ and $F' _{T_\alpha}$ as 
\[ \tau _{\le 0} F'_{\alpha} = \tau _{\le 0} F'_{X_\alpha}, ~
\tau _{\ge 1} F'_{\alpha} = \tau _{\ge 1} F'_{T_\alpha}, ~
d_{F'_\alpha} = {\epsilon '}_{T_\alpha} \alpha {\rho '}_{X_\alpha}  \]  
where 
${\rho '}_{X_\alpha} : {F'}^0 _{X_\alpha} \to X_\alpha$ is the natural projection and 
${\epsilon '}_{T_\alpha} : T_\alpha \to F^{\prime 1}_{T_\alpha}$ is the natural inclusion.  

Next we see that $f \in \Hom _{\Morph^{\mrm{e}}(\CM R)}(\alpha , \beta )$ determines 
$F_f \in \Hom _{{\sf K}^{\infty ,\mrm{b}}(\pj {R})}(F_ \alpha , \\
F_\beta )$. 
Suppose two chain maps $\varphi _1$ and $\varphi _2$ both satisfy 
${\rm H}^0 (\tau _{\le 0} \varphi _i )= f_X$, 
${\rm H}^1 (\tau _{\ge 1} \varphi _i )= f_T$ \quad (i =1,2). 
Then $\tau _{\le 0} \varphi _1$ and $\tau _{\le 0} \varphi _2$ 
($\tau _{\ge 1} \varphi _1$ and $\tau _{\ge 1} \varphi _2$) are homotopic , hence 
so are $\varphi _1$ and $\varphi _2$. 

We have already seen that $F$ is functorial from Lemma~\ref{K8Jul55} 1). 

We shall show that $F$ is full. Let $\alpha$ and $\beta$ be objects of $\underline{\Morph}^{\mrm{e}}(\CM R)$ and 
let $\varphi$ be a morphism from $F_\alpha$ to $F_\beta$. 
Then $f_X = {\rm H}^0 (\tau _{\le 0} \varphi )$ and 
$f_T = {\rm H}^1 (\tau _{\ge 1} \varphi )$ satisfy 
$\beta f_X = f_T \alpha$ because 
$\epsilon _{T_\beta} (\beta f _X ) \rho _{X_\alpha} = 
\epsilon _{T_\beta} (f _T \alpha ) \rho _{X_\alpha}$, 
$\rho _{X_\alpha}$ is surjective and $\epsilon _{T_\beta}$ is injective. 
Hence $f = (f _X , f _T )$ is a morphism from 
$\alpha$ to $\beta$ in $\Morph^{\mrm{e}}(\CM R)$. 
Obviously $\varphi = F_f$. 
\end{proof}

\begin{lem}\label{Fbar} 
The functor $F$ induces a functor 
\[
\underline{F} : \underline{\Morph}^{\mrm{e}}(\CM R) \to ({\sf K}^{\infty,\mrm{b}}/ {\sf K}^{\mrm{b}})(\pj {R}). 
\]
\end{lem}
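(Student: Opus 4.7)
The plan is to show that $F$ sends every projective-injective object of the Frobenius category $\Morph^{\mrm{e}}(\CM R)\simeq\CM\opn{T}_2(R)$ into $\cat{K}^{\mrm{b}}(\pj R)\subset \cat{K}^{\infty,\mrm{b}}(\pj R)$. Once this is verified, the composition of $F$ with the canonical quotient $\cat{K}^{\infty,\mrm{b}}(\pj R)\to(\cat{K}^{\infty,\mrm{b}}/\cat{K}^{\mrm{b}})(\pj R)$ annihilates every morphism factoring through a projective-injective, and the desired $\underline{F}$ then arises from the universal property of the stable category.

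First I would identify the projective-injectives. Via Proposition~\ref{CMT2} together with the known structure of indecomposable projective $\opn{T}_2(R)$-modules as $e_1\opn{T}_2(R)$ and $e_2\opn{T}_2(R)$, the indecomposable projective-injectives in $\Morph^{\mrm{e}}(\CM R)$ correspond under $\opn{cok}$ to the two basic forms $(1_P : P \to P)$ and $(P \to 0)$ for $P\in\pj R$; an arbitrary projective-injective is a finite direct sum of such objects.

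Next I would compute $F$ on each basic form, exploiting the flexibility (established in Lemma~\ref{F}) that the value of $F$ in $\cat{K}^{\infty,\mrm{b}}(\pj R)$ is independent of the chosen acyclic resolutions. For $\alpha=(P\to 0)$, taking $F_{T_\alpha}=0$ yields $F_\alpha=\tau_{\leq 0}F_{X_\alpha}$, which is a projective resolution of the projective module $P$; such a resolution splits and is homotopy equivalent to $P$ concentrated in degree $0$, so $F_\alpha\in\cat{K}^{\mrm{b}}(\pj R)$. For $\alpha=(1_P:P\to P)$, taking $F_{X_\alpha}=F_{T_\alpha}$ to be $P\xarr{=}P$ placed in degrees $0$ and $1$ makes both $\rho_{X_\alpha}$ and $\epsilon_{T_\alpha}$ identity maps, so $F_\alpha$ is $P\xarr{=}P$ in degrees $0,1$, a contractible bounded complex of projectives.

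The conclusion will then follow from additivity of $F$ (which holds up to isomorphism by taking direct sums of the chosen resolutions) and from the fact that $\cat{K}^{\mrm{b}}(\pj R)$ is thick in $\cat{K}^{\infty,\mrm{b}}(\pj R)$, so that every projective-injective is sent to a zero object in the Verdier quotient. I expect the main delicate point to be the explicit identification of the projective-injectives together with the selection of simple resolutions witnessing boundedness of their images; the verifications themselves are routine.
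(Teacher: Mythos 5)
Your proposal is correct and follows essentially the same route as the paper: the paper's proof likewise observes that a projective object $p$ of $\Morph^{\mrm{e}}(\CM R)$ has projective components $X_p$ and $T_p$, so that $F_p$ can be realized as the bounded complex $X_p\xarr{p}T_p$ concentrated in degrees $0,1$, whence $F$ kills morphisms factoring through projective-injectives after passing to the Verdier quotient by $\cat{K}^{\mrm{b}}(\pj R)$. Your extra step of listing the indecomposable projective-injectives $(1_P:P\to P)$ and $(P\to 0)$ is a harmless refinement of the same argument.
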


\begin{proof} Let $p$ be an object of $\Morph^{\mrm{e}}(\CM R)$. 
If $F_p$ is bounded, then both $X_p$ and $T_p$ have finite projective dimension hence 
are projective modules. If $p$ is a projective object in $\Morph^{\mrm{e}}(\CM R)$, 
then $X_p$ and $T_p$ are projective modules hence
$F_p$ is just given by $F_p ^0 =X_p$, $F_p ^1 =T_p$, $d_{F_p} ^0 =p$ and 
$F_p ^i =0 \quad (i\neq 0,1 )$. 
\end{proof} 

For an object $A$ and a morphism $\varphi$ in ${\sf K}^{\infty,\mrm{b}}(\pj {R})$, 
$\underline{A}$ and $\underline{\varphi}$ denote $Q(A)$ and $Q(\varphi )$ respectively 
where $Q$ is the canonical quotient functor 
$Q: {\sf K}^{\infty,\mrm{b}}(\pj {R}) \to ({\sf K}^{\infty,\mrm{b}}/ {\sf K}^{\mrm{b}})(\pj {R})$.

\begin{prop}\label{K665}
The functor $\underline{F} : \underline{\Morph}^{\mrm{e}}(\CM R)\to ({\sf K}^{\infty,\mrm{b}}/ {\sf K}^{\mrm{b}})(\pj {R})$
is a triangle functor. 
\end{prop}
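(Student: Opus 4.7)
To prove $\underline F$ is a triangle functor, I need to exhibit a natural isomorphism $\underline F\circ\Sigma\simeq\Sigma\circ\underline F$ and to verify that distinguished triangles are sent to distinguished triangles. Via Proposition~\ref{CMT2}, the source $\underline{\Morph}^{\mrm{e}}(\CM R)$ identifies with the stable category of the Frobenius category $\Morph^{\mrm{e}}(\CM R)$, so its distinguished triangles come from short exact sequences $0\to\alpha\to\beta\to\gamma\to 0$, and its translation $\Sigma\alpha$ is defined by choosing a short exact sequence $0\to\alpha\to p_\alpha\to\Sigma\alpha\to 0$ with $p_\alpha$ projective-injective. Distinguished triangles in the target $(\cat{K}^{\infty,\mrm{b}}/\cat{K}^{\mrm{b}})(\pj R)$ arise as images of triangles in $\cat{K}^{\infty,\mrm{b}}(\pj R)$, equivalently from degree-wise split short exact sequences of complexes.

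First I would unpack Lemma~\ref{K8Jul55}(3): any short exact sequence in $\Morph^{\mrm{e}}(\CM R)$ produces a short exact sequence of complexes $0\to F_\alpha\to F_\beta\to F_\gamma\to 0$ in $\cat{C}(\pj R)$. Since each term is componentwise in $\pj R$, this sequence is split in every degree, hence induces a distinguished triangle in $\cat{K}^{\infty,\mrm{b}}(\pj R)$ and thus in the quotient.

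For the commutation with shifts, I would apply $F$ to the defining sequence $0\to\alpha\to p_\alpha\to\Sigma\alpha\to 0$. As noted in the proof of Lemma~\ref{Fbar}, $F_{p_\alpha}$ is concentrated in degrees $0$ and $1$, so it lies in $\cat{K}^{\mrm{b}}(\pj R)$ and becomes zero in the quotient. The resulting distinguished triangle in $(\cat{K}^{\infty,\mrm{b}}/\cat{K}^{\mrm{b}})(\pj R)$ collapses to a canonical isomorphism $\phi_\alpha:\underline F(\Sigma\alpha)\xrightarrow{\sim}\Sigma\underline F(\alpha)$. Naturality of $\phi$ follows from Lemma~\ref{K8Jul55}(1)(2), since any morphism $\alpha\to\alpha'$ lifts to a morphism between chosen defining sequences, which is unique up to null-homotopy.

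The remaining task, which is the main obstacle, is to verify that for a short exact sequence $0\to\alpha\to\beta\to\gamma\to 0$ in $\Morph^{\mrm{e}}(\CM R)$, the composite
\[\underline F(\gamma)\xrightarrow{\underline F(\partial)}\underline F(\Sigma\alpha)\xrightarrow{\phi_\alpha}\Sigma\underline F(\alpha)\]
agrees with the connecting morphism produced from the degree-wise split sequence $0\to F_\alpha\to F_\beta\to F_\gamma\to 0$. This is a diagram chase: one extends the morphism $\alpha\to p_\alpha$ to the given sequence (which by injectivity of $p_\alpha$ in the Frobenius category defines the connecting morphism $\partial:\gamma\to\Sigma\alpha$), applies $F$ using Lemma~\ref{K8Jul55}(2), and compares the two triangles in $\cat{K}^{\infty,\mrm{b}}(\pj R)$ via the octahedral axiom. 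After passing to the quotient by $\cat{K}^{\mrm{b}}(\pj R)$, in which $F_{p_\alpha}$ vanishes, the two triangles become isomorphic through $\phi_\alpha$, completing the verification.
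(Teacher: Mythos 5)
Your proposal is correct and follows essentially the same route as the paper: your $\phi_\alpha$ is the paper's $\underline{i_\alpha}$, obtained from a sequence $0\to\alpha\to q_\alpha\to\Sigma\alpha\to 0$ with $q_\alpha$ projective-injective together with the fact that $F_{q_\alpha}$ is bounded, and the image triangle comes from the degreewise split exact sequence $0\to F_\alpha\to F_\beta\to F_\gamma\to 0$ supplied by Lemma \ref{K8Jul55}(3). The only organizational difference is that the paper constructs the image triangle by forming the pushout of $F_{q_\alpha}\leftarrow F_\alpha\to F_\beta$ in $\cat{C}(\fmod R)$ and then identifying the resulting complex $E$ with $F_\gamma$, whereas you apply $F$ to the whole short exact sequence at once and afterwards match the two connecting morphisms against the triangle $F_\alpha\to F_{q_\alpha}\to F_{\Sigma\alpha}\xrightarrow{i_\alpha}\Sigma F_\alpha$; both versions reduce to the same comparison, carried out with the strict chain-level maps of Lemma \ref{K8Jul55} rather than only homotopy-commutative squares.
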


\begin{proof}
First we show the existence of a functorial isomorphism 
$\underline{i_\alpha} : \underline{F_{\Sigma \alpha}} \to \underline{\Sigma F_\alpha}$ 
for each object $\alpha$ of $\Morph^{\mrm{e}}(\CM R)$. 
Take an exact sequence 
\[ 0 \to \alpha \stackrel{\epsilon _\alpha}{\to} q_\alpha \stackrel{\pi_\alpha}{\to} \Sigma \alpha \to 0 \] 
in the Frobenius category $\Morph^{\mrm{e}}(\CM R)$
such that $q_\alpha$ is an injective object.
Then it induces an exact sequence 
\[ 0 \to F_\alpha \stackrel{F_{\epsilon _\alpha}}{\to} F_{q_\alpha} \stackrel{F_{\pi_\alpha}}{\to}  F_{\Sigma \alpha} \to 0 \] 
in 
$\cat{C}(\fmod R)$.
Let $I_{F_\alpha}$ be the mapping cone of the identity map
$1:F_\alpha \to F_\alpha$.
Then 
we have a commutative diagram with exact rows in 
$\cat{C}(\fmod R)$
\[  \begin{matrix}0&\to&F_ \alpha& \stackrel{F _{\epsilon_\alpha}}{\to}&F_{q_\alpha} 
&\stackrel{F_ {\pi_{\alpha}}}{\to}&F_{\Sigma \alpha}&\to&0\cr 
&&\|&&\mapdown{j_\alpha}&&\mapdown{i_\alpha}&&\cr
0&\to&F_ \alpha&\to&I_{F_\alpha}&{\to}&\Sigma F_{\alpha}&\to&0\cr\end{matrix} \]
which implies a triangle in ${\sf K}^{\infty , \mrm{b}} (\pj R)$ 
\[ F_{\alpha} \xarr{F_{\epsilon _\alpha}} F_{q_\alpha} \xarr{F_{\pi_{\alpha}}} F_{\Sigma \alpha} 
\xarr{i_\alpha} \Sigma F_{\alpha}. 
\] 
Since $F_{q_\alpha}$ belongs to ${\sf K}^{\mrm{b}}(\pj R)$, 
$\underline{i_\alpha}$ is an isomorphism in 
$({\sf K}^{\infty , \mrm{b}}/ {\sf K}^{\mrm{b}})(\pj R)$. 
Moreover, $\underline{i_\alpha}$ is functorial. 
Let $f: \alpha \to \beta$ be a morphism in $\Morph^{\mrm{e}}(\CM R)$. 
We have a commutative diagram with exact rows
\[ \begin{matrix}0&\to&\alpha&\stackrel{\epsilon_\alpha}{\to}&q_\alpha &\stackrel{\pi _{\alpha}}{\to}&\Sigma \alpha&\to&0\cr 
&&\mapdown{f}&&\mapdown{f_q}&&\mapdown{\Sigma f}&&\cr
0&\to&\beta&\stackrel{\epsilon _\beta}{\to}&q_\beta & \stackrel{\pi _{\beta}}{\to}&\Sigma \beta&\to&0.\cr\end{matrix} \] 
where $q_\alpha$ and $q_\beta$ are injective objects. 
Since $F_{\epsilon _\beta} F_f =F_{f_q} F_{\epsilon _\alpha}$, $F_{\pi _\beta} F_{f_q} = F_{\Sigma f} F_{\pi _\beta}$ in 
${\sf K}^{\infty , \mrm{b}}(\pj R)$, there is a morphism of triangles 
\[\begin{CD}
F_{\alpha} @>F_{\epsilon_\alpha}>> F_{q_\alpha} @>F_{\pi _\alpha}>> F_{\Sigma\alpha} @>i_{\alpha}>> \Sigma F_{\alpha} \\
@VV F_{f}V @VV F_{f_q}V @VV F_{\Sigma f}V @VV \Sigma F_{f}V\\
F_{\beta} @>F_{\epsilon_\beta}>> F_{q_\beta} @>F_{\pi _\beta}>> F_{\Sigma\beta} @>i_\beta>>\Sigma F_{\beta} \\
\end{CD}\] 
Particularly, $\Sigma F_f i_\alpha = i_\beta F_{\Sigma f}$. 

Next, let 
\[ \underline{\alpha} \stackrel{\underline{f}}{\to} \underline{\beta} \stackrel{\underline{g}}{\to} 
\underline{\gamma}  \stackrel{\underline{h}}{\to}
\Sigma \underline{\alpha} \] be a triangle in $\underline{\Morph}^{\mrm{e}}(\CM R)$. 
That is, 
we have
an injective object $q_\alpha$ such that there is
a commutative diagram  in $\Morph^{\mrm{e}}(\CM R)$
with exact rows:
\[ \begin{matrix}0&\to&\alpha&\stackrel{\epsilon_\alpha}{\to}&q_\alpha&\stackrel{\pi _\alpha}{\to}&\Sigma \alpha&\to&0\cr 
&&\mapdown{f}&&\mapdown{w}&&\|&&\cr
0&\to&\beta&\stackrel{g}{\to}&\gamma&\stackrel{h}{\to}&\Sigma \alpha&\to&0.\cr\end{matrix} \] 
The induced exact sequence 
\[ \begin{matrix}0&\to&F_ \alpha& \stackrel{F _{\epsilon_\alpha}}{\to}&F_{q_\alpha}&\stackrel{F_ {\pi _\alpha}}{\to}&F_{\Sigma \alpha}&\to&0\cr 
\end{matrix} \]
is completed as an diagram with exact rows in 
$\cat{C}(\fmod R)$:
\begin{equation}\label{16Octp50}
 \begin{matrix}0&\to&F_ \alpha& \stackrel{F _{\epsilon_\alpha}}{\to}&F_q&\stackrel{F_ {\pi _\alpha}}{\to}&F_{\Sigma \alpha}&\to&0\cr 
&&\mapdown{F_f}&&\mapdown{s}&&\|&&\cr
0&\to&F_ \beta&\stackrel{c}{\to}&E&\stackrel{n}{\to}&F_{\Sigma \alpha}&\to&0\cr\end{matrix} \end{equation}
The bottom row induces a commutative diagram in $\CM R$ with exact rows 
\[ \begin{matrix}0&\to&X_ \beta&\stackrel{g'_X}{\to}&X_E&\stackrel{h' _X}{\to}&X_{\Sigma \alpha}&\to&0\cr 
&&\mapdown{\beta}&&\mapdown{\lambda}&&\mapdown{\Sigma \alpha}&&\cr
0&\to&T_ \beta&\stackrel{g' _T}{\to}&T_E&\stackrel{h' _T}{\to}&T_{\Sigma \alpha}&\to&0\cr \end{matrix} \] 
where $\lambda$ is the canonical map between $X_E = \opn{Cok} d_E^{-1}$ and $T_E = \opn{Ker} d_E ^{1}$. 
Hence $\lambda$ belongs to $\Morph^{\mrm{e}}(\CM R)$ and $E= F_\lambda$ since 
$\rm{H}^i (E) = 0$ for $i\neq 0$. 
We have $c =F_{g'}$ and $n=F_{h'}$ with $g'= ({g'}_X , {g'}_T )$ and $h'= ({h'}_X , {h'}_T )$. 
By the same argument in Lemma~\ref{F}, there exists $w' \in \Hom _{\CM} (q_\alpha , \lambda )$ such that $s=F_{w'}$. 
Thus we have a diagram with exact rows: 
\[ \begin{matrix}0&\to&\alpha&\stackrel{\epsilon_\alpha}{\to}&q_\alpha&\stackrel{\pi _\alpha}{\to}&\Sigma \alpha&\to&0\cr 
&&\mapdown{f}&&\mapdown{w'}&&\|&&\cr
0&\to&\beta&\stackrel{g'}{\to}&\lambda&\stackrel{h'}{\to}&\Sigma \alpha&\to&0.\cr\end{matrix} \] 
which shows $\gamma \cong \lambda$, $g' =g$ and $h' =h$ via this isomorphism. 
Hence $E= F_\gamma$ and (\ref{16Octp50}) implies a morphism between triangles in $\cat{K}^{\infty,\mrm{b}}(\pj {R})$:

\[\begin{CD}
F_{\alpha} @>F_{\epsilon_\alpha}>> F_{q_\alpha} @>F_{\pi_\alpha}>> F_{\Sigma\alpha} @>i_{\alpha}>>\Sigma F_{\alpha} \\
@VV F_{f}V @VV F_{w'}V @| @VV \Sigma F_{f}V\\
F_{\beta} @>F_g>> F_{\gamma} @>F_h>> F_{\Sigma\alpha} @>\Sigma {F_f} {i_\alpha}>>\Sigma F_{\beta} \\
\end{CD}\]

In $({\sf K}^{\infty , \mrm{b}}/ {\sf K}^{\mrm{b}})(\pj R)$, $\underline{i_\alpha}$ is an isomorphism 
hence we get a triangle 
\[ \underline{F_{\beta}} \xarr{\underline{F_g}} \underline{F_{\gamma}} \xarr{\underline{F_h}} \underline{\Sigma F_{\alpha} }
\xarr{\underline{\Sigma F_{f}}} \underline{\Sigma F_{\beta}} \] which implies a triangle 
\[ \underline{F_{\alpha}} \xarr{\underline{F_f}} \underline{F_{\beta}} \xarr{\underline{F_g}} \underline{F_{\gamma} } 
\xarr{\underline{F_{h}}}  \underline{\Sigma F_{\alpha}}. \]
\end{proof}

\begin{lem} \label{st-eq2bis}
The functor 
$\underline{F}\vert_{\underline{\Morph}^{11}(\CM R)}:\underline{\Morph}^{11}(\CM R) \to 
\cat{K}^{\infty,\emptyset}(\pj {R})$
is a triangle equivalence.
\end{lem}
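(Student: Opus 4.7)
The plan is to identify $\underline{F}\vert_{\underline{\Morph}^{11}(\CM R)}$, up to natural isomorphism, with the quasi-inverse of the equivalence $Z^0: \cat{K}^{\infty,\emptyset}(\pj R)\to\underline{\CM} R$ from Proposition~\ref{cm}(2), precomposed with the equivalence $\underline{\CM} R\simeq\underline{\Morph}^{11}(\CM R)$ furnished by Theorem~\ref{st-st2}(3). Once this identification is made, the assertion is immediate, since Proposition~\ref{K665} already gives that $\underline{F}$ is a triangle functor.

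First I would unravel the definition of $F$ on the object $1_T:T\to T$ for $T\in\CM R$. By Proposition~\ref{cm}(1) there is an acyclic complex $F_T\in\cat{C}(\pj R)$ with $\mrm{H}^0(\tau_{\le 0}F_T)=T$. In the construction of $F_\alpha$ one is free to choose $F_{X_\alpha}$ and $F_{T_\alpha}$: take $F_{X_{1_T}}=F_{T_{1_T}}=F_T$ with $\rho_T$ and $\epsilon_T$ given by the natural surjection and injection associated to $F_T$. Then $d_{F_{1_T}}^0=\epsilon_T\cdot 1_T\cdot\rho_T$ coincides with $d_{F_T}^0$, and hence $F_{1_T}=F_T$ as an object of $\cat{K}^{\infty,\emptyset}(\pj R)$. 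Thus $\underline{F}$ sends $\underline{\Morph}^{11}(\CM R)$ into $\cat{K}^{\infty,\emptyset}(\pj R)$, and on objects agrees with the complete-resolution functor $T\mapsto F_T$.

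Next I would verify that this identification is functorial. Given $T,T'\in\CM R$ and a morphism $f:T\to T'$, the induced morphism $(f,f):1_T\to 1_{T'}$ in $\Morph^{11}(\CM R)$ lifts, by Lemma~\ref{K8Jul55}(1), to a chain map $F_{(f,f)}:F_T\to F_{T'}$ with $\mrm{H}^0(\tau_{\le 0}F_{(f,f)})=f$; this is precisely a lift of $f$ to complete resolutions. The already-established fact that $F_T$ is unique in $\cat{K}^{\infty,\emptyset}(\pj R)$ (as in the proof of Lemma~\ref{F}) and that the lift of $f$ is unique up to homotopy shows that the assignment $1_T\mapsto F_T$ extends to a well-defined functor $\underline{\CM} R\to\cat{K}^{\infty,\emptyset}(\pj R)$ which is naturally isomorphic to $\underline{F}\vert_{\underline{\Morph}^{11}(\CM R)}$ under the equivalence of Theorem~\ref{st-st2}(3).

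Finally, I would conclude that this functor is quasi-inverse to $Z^0:\cat{K}^{\infty,\emptyset}(\pj R)\to\underline{\CM} R$: on objects we have $Z^0(F_T)=T$ by construction, and on morphisms the identity $\mrm{H}^0(\tau_{\le 0}F_{(f,f)})=f$ exhibits the unit as the identity. Since $Z^0$ is an equivalence by Proposition~\ref{cm}(2), so is its quasi-inverse. Hence $\underline{F}\vert_{\underline{\Morph}^{11}(\CM R)}$ is an equivalence, and being a restriction of the triangle functor $\underline{F}$, it is a triangle equivalence. The only subtle point is the coherent choice of the complex $F_T$ as the common choice for both $X$- and $T$-resolutions of $1_T$; I do not anticipate any serious obstacle beyond this bookkeeping.
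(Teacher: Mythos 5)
Your proposal is correct and follows essentially the same route as the paper: the paper's proof likewise factors through the equivalence $\underline{H}:\underline{\CM}R\to\underline{\Morph}^{11}(\CM R)$, $T\mapsto 1_T$, and observes that the composite $\underline{F}\vert_{\underline{\Morph}^{11}(\CM R)}\circ\underline{H}$ is the complete-resolution functor, quasi-inverse to the cycle functor of Proposition~\ref{cm}(2); you merely supply the bookkeeping the paper leaves as ``easy to see.'' The only cosmetic slip is the degree convention: with the paper's truncations one has $T\cong\mrm{Z}^1(F_T)=\opn{Ker}d^1_{F_T}$ rather than $\mrm{Z}^0(F_T)$ (compare the use of $Z^1$ in Proposition~\ref{quasi-inverse}), which only shifts the quasi-inverse by the suspension and does not affect the conclusion.
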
 

\begin{proof}
Define the functor $\underline{H}:\underline{\CM} R \to \underline{\Morph}^{11}(\CM R)$ by
$H(X)=(X\xarr{1_{X}} X)$ for any object $X \in \CM R$.
Then $H$ is an triangle equivalence.
Moreover, it is easy to see that the functor$\underline{F}\vert_{\underline{\Morph}^{11}(\CM R)}\circ\underline{H}:
\underline{\CM R} \to \cat{K}^{\infty,\emptyset}(\pj {R})$ is also an equivalence. 
Therefore we have the statement.
\end{proof}

\begin{lem} \label{st-eq1}
Let $R$ be an Iwanaga-Gorenstein ring. 
Then the functor $\underline{F}: \\
\underline{\Morph}^{\mrm{e}}(\CM R) \to \cat{K}^{\infty,\mrm{b}}(\pj {R})$ 
sends a triangle $(\underline{\Morph}^{01}(\CM R),\underline{\Morph}^{10}(\CM R), \\
\underline{\Morph}^{11}(\CM R))$ of recollements to a triangle
$((\cat{K}^{+,\mrm{b}}/\cat{K}^{\mrm{b}})(\pj {R}),
(\cat{K}^{-,\mrm{b}}/\cat{K}^{\mrm{b}})(\pj {R}), \\
\cat{K}^{\infty ,\emptyset}(\pj {R}))$ of recollements. 
\end{lem}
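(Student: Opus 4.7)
My plan is to verify the three pointwise inclusions demanded by the definition of the triangle functor sending one triangle of recollements to another, namely
\[
\underline{F}(\underline{\Morph}^{01}(\CM R))\subseteq(\cat{K}^{+,\mrm{b}}/\cat{K}^{\mrm{b}})(\pj R),\quad
\underline{F}(\underline{\Morph}^{10}(\CM R))\subseteq(\cat{K}^{-,\mrm{b}}/\cat{K}^{\mrm{b}})(\pj R),
\]
\[
\underline{F}(\underline{\Morph}^{11}(\CM R))\subseteq\cat{K}^{\infty,\emptyset}(\pj R).
\]
Since both source and target have already been shown to be triangles of recollements (Theorems~\ref{st-st2} and \ref{st-h3}), these three inclusions are all that the statement asks for.

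The $\underline{\Morph}^{11}$ inclusion is essentially contained in Lemma~\ref{st-eq2bis}: applying the construction of $F$ to $1_T:T\to T$ glues a projective resolution with a projective coresolution of $T$ along the identity, producing a complete acyclic projective resolution of $T$, hence lying in $\cat{K}^{\infty,\emptyset}(\pj R)$. For the $\underline{\Morph}^{10}$ inclusion, I will take a representative $\alpha:X\to 0$ and use the trivial choice $F_{T_\alpha}=0$ in the construction of $F$; this forces $F_\alpha=\tau_{\leq 0}F_{X_\alpha}$, a bounded-above complex, so that $\underline{F}(\alpha)\in(\cat{K}^{-,\mrm{b}}/\cat{K}^{\mrm{b}})(\pj R)$.

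The $\underline{\Morph}^{01}$ case is the subtlest and I expect it to be the main obstacle. For $\alpha:P\to T$ with $P\in\pj R$, I must exhibit $F_\alpha$, at least up to isomorphism in $\cat{K}^{\infty,\mrm{b}}(\pj R)$, as a bounded-below complex. The key is to exploit the invariance established in Lemma~\ref{F}: the class of $F_\alpha$ does not depend on the particular acyclic complex $F_P$ of projectives chosen to represent $P$, as long as $\mrm{H}^0(\tau_{\leq 0}F_P)=P$. I will therefore make the convenient choice $F_P=(\cdots\to 0\to P\xrightarrow{\mrm{id}}P\to 0\to\cdots)$ concentrated in degrees $0$ and $1$; this complex is acyclic, its cocycles ($0$ and $P$) both lie in $\CM R$, and its $\tau_{\leq 0}$-truncation is $P$ in degree $0$ with canonical surjection $\rho_P=\mrm{id}_P$. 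The splicing construction then outputs the bounded-below complex $(0\to P\xrightarrow{\epsilon_{T_\alpha}\alpha}F_{T_\alpha}^1\to F_{T_\alpha}^2\to\cdots)$, placing $\underline{F}(\alpha)$ in $\cat{K}^{+,\mrm{b}}(\pj R)\subseteq(\cat{K}^{+,\mrm{b}}/\cat{K}^{\mrm{b}})(\pj R)$, as required.
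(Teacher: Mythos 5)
Your proposal is correct: the paper states Lemma~\ref{st-eq1} without proof, and the argument it implicitly relies on is exactly your direct verification of the three containments, exploiting the independence of $F_\alpha$ from the choice of $F_{X_\alpha}$ and $F_{T_\alpha}$ established in Lemma~\ref{F} (your computation for $\underline{\Morph}^{01}$ also matches the paper's own observation in the proof of Lemma~\ref{Fbar} that $F_p$ is a two-term complex when the source of $\alpha$ is projective). All three special choices of resolutions are legitimate, and the resulting complexes land in $\cat{K}^{+,\mrm{b}}$, $\cat{K}^{-,\mrm{b}}$ and $\cat{K}^{\infty,\emptyset}$ respectively, which is all the definition of ``sends a triangle of recollements to a triangle of recollements'' requires.
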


\begin{thm} \label{mth}
Let $R$ be an Iwanaga-Gorenstein ring. There is a triangle equivalence from $\underline{\Morph}^{\mrm{e}}(\CM R)$
to $(\cat{K}^{\infty,\mrm{b}}/\cat{K}^{\mrm{b}})(\pj {R})$. 
\end{thm}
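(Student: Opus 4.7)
The plan is to apply Proposition~\ref{p20Apr30} to the triangle functor $\underline{F}:\underline{\Morph}^{\mrm{e}}(\CM R)\to(\cat{K}^{\infty,\mrm{b}}/\cat{K}^{\mrm{b}})(\pj R)$ constructed in Lemmas~\ref{F}, \ref{Fbar} and Proposition~\ref{K665}. All the pieces needed are already set up: by Theorem~\ref{st-st2} the source carries the triangle of recollements $(\underline{\Morph}^{01}(\CM R),\underline{\Morph}^{10}(\CM R),\underline{\Morph}^{11}(\CM R))$, by Theorem~\ref{st-h3} the target carries the triangle of recollements $((\cat{K}^{+,\mrm{b}}/\cat{K}^{\mrm{b}})(\pj R),(\cat{K}^{-,\mrm{b}}/\cat{K}^{\mrm{b}})(\pj R),\cat{K}^{\infty,\emptyset}(\pj R))$, and Lemma~\ref{st-eq1} asserts that $\underline{F}$ sends the first to the second.

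First I verify the restriction hypothesis of Proposition~\ref{p20Apr30}: by Lemma~\ref{st-eq2bis}, the restriction $\underline{F}|_{\underline{\Morph}^{11}(\CM R)}$ is a triangle equivalence onto $\cat{K}^{\infty,\emptyset}(\pj R)$ (which is exactly the third vertex of the target triangle of recollements). Proposition~\ref{p20Apr30} then tells us that equivalence on one vertex propagates, via the two adjacent stable t-structures, to equivalences on the other two vertices and hence to an equivalence on the whole category.

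Concretely, the argument inside Proposition~\ref{p20Apr30} runs as follows (and I would simply invoke it). From Corollary~\ref{p7Apr30} applied to the two consecutive stable t-structures $(\underline{\Morph}^{10}(\CM R),\underline{\Morph}^{11}(\CM R))$ and $(\underline{\Morph}^{11}(\CM R),\underline{\Morph}^{01}(\CM R))$ and their images, the equivalence on $\underline{\Morph}^{11}(\CM R)$ induces an equivalence on the quotient, which by Proposition~\ref{t-st-first}(3) transfers to an equivalence on $\underline{\Morph}^{01}(\CM R)$; the same reasoning on the opposite side gives an equivalence on $\underline{\Morph}^{10}(\CM R)$. Lemma~\ref{17-1Apr30}(3) then shows $\underline{F}$ is dense, and the five-lemma argument in the proof of Corollary~\ref{p17-1Apr30} gives full faithfulness.

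Since all technical work has been distributed across the preceding lemmas (the construction and triangulatedness of $\underline{F}$, the compatibility with the triangles of recollements, and the equivalence on one vertex), the proof of Theorem~\ref{mth} itself is just a one-line application of Proposition~\ref{p20Apr30}. The only place where any obstacle could have arisen is Lemma~\ref{st-eq1}, where one must check that $\underline{F}$ sends each of the three vertices into the corresponding vertex on the target side; but this is already assumed proved.
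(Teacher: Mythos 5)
Your proposal is correct and follows essentially the same route as the paper: invoke the two triangles of recollements from Theorems~\ref{st-st2} and~\ref{st-h3}, use Lemma~\ref{st-eq1} to see that $\underline{F}$ respects them, note via Lemma~\ref{st-eq2bis} that $\underline{F}\vert_{\underline{\Morph}^{11}(\CM R)}$ is an equivalence, and conclude by Proposition~\ref{p20Apr30} (using the cyclic symmetry of a triangle of recollements to treat $\underline{\Morph}^{11}(\CM R)$ as the distinguished vertex). The extra paragraph unpacking the internals of Proposition~\ref{p20Apr30} is accurate but not needed.
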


\begin{proof}
>From Theorem~\ref{st-h3} and Theorem~\ref{st-st2}, there are triangles of recollements 
$(\underline{\Morph}^{01}(\CM R), \underline{\Morph}^{10}(\CM R), \underline{\Morph}^{11}(\CM R))$
in $\underline{\Morph}^{\mrm{e}}(\CM R)$ \\
and 
$((\cat{K}^{+,\mrm{b}}/\cat{K}^{\mrm{b}})(\pj {R}),
(\cat{K}^{-,\mrm{b}}/\cat{K}^{\mrm{b}})(\pj {R}), 
\cat{K}^{\infty ,\emptyset}(\pj {R}))$ in $(\cat{K}^{\infty,\mrm{b}}/ \cat{K}^{\mrm{b}})(\pj {R})$. 
By Lemma \ref{st-eq1}, the functor 
$\underline{F}:\underline{\Morph}^{\mrm{e}}(\CM R)\to (\cat{K}^{\infty ,\mrm{b}}/ \cat{K}^{\mrm{b}})(\pj {R})$ restricts to 
triangle functors 
$\underline{F}\vert_{\underline{\Morph}^{01}(\CM R)}:\underline{\Morph}^{01}(\CM R)\to (\cat{K}^{+,\mrm{b}}/ \cat{K}^{\mrm{b}})(\pj {R})$, 
$\underline{F}\vert_{\underline{\Morph}^{10}(\CM R)}:\underline{\Morph}^{10}(\CM R)\to (\cat{K}^{-,\mrm{b}}/ \cat{K}^{\mrm{b}})(\pj {R})$  and 
$\underline{F}\vert_{\underline{\Morph}^{11}(\CM R)}:\underline{\Morph}^{11}(\CM R)\to \\
\cat{K}^{\infty ,\emptyset }(\pj {R})$.   
And $\underline{F}\vert_{\underline{\Morph}^{11}(\CM R)}$ is an equivalence by Lemma \ref{st-eq2bis}. 
Then Proposition \ref{p20Apr30} gives us the conclusion.
\end{proof}

The quasi-inverse of $\underline{F}$ is constructed in the following way. 
There is a recollement  
\[\xymatrix{
\cat{K}^{\infty,\emptyset}(\pj {R}) \ar@<-1ex>[r]^{k_{*}}
& \cat{K}^{\infty,\mrm{b}}(\pj {R})
\ar@<-2ex>[l]_{k^{*}} \ar@<2ex>[l]^{k^{!}} \ar@<-1ex>[r]^{l^{*}} 
& \quad (\cat{K}^{\infty,\mrm{b}}/\cat{K}^{\infty,\emptyset})(\pj {R}) 
\ar@<-2ex>[l]_{l_{!}} \ar@<2ex>[l]^{l_{*}}
 }\] 
With respect to two functors $X={k_*}{k^!}$ and $T={k_*}{k^*}$, 
the adjunction arrows
$k_{*}k^{!} \to \bsym{1}_{\cat{K}^{\infty,\mrm{b}}(\pj {R})}$
and $\bsym{1}_{\cat{K}^{\infty,\mrm{b}}(\pj {R})} \to k_{*}k^{*}$ 
imply morphisms 
$\xi_L : X_L \to L$ and $\zeta _L : L \to T_L$ for each object $L$ of 
$\cat{K}^{\infty,\mrm{b}}$. Put ${\lambda _L} ={\zeta _L}{\xi_L}$. 
We may assume ${\lambda _L}^0$ is surjective hence get a surjective map 
${Z^1} \lambda _L : {Z^1}{X_L} \to {Z^1}{T_L}$ where ${Z^1}{X_L}= \opn{Ker}d^1_{X_L}$ and 
${Z^1}{T_L}= \opn{Ker}d^1_{T_L}$. 
Let $f/s : L \to M$ be a morphism of $(\cat{K}^{\infty,\mrm{b}}/ \cat{K}^{\mrm{b}})(\pj {R})$ 
given by a diagram of morphisms 
$L \stackrel{s}{\leftarrow} L' \stackrel{f}{\rightarrow} M$ in ${\cat{K}^{\infty,\mrm{b}}(\pj {R})}$ 
with $C(s) \in {\cat{K}^{\mrm{b}}(\pj {R})}$.  
Since $Xs$ and $Ts$ are isomorphisms, $(({Z^1}{Xf}) ({Z^1}{Xs})^{-1}, ({Z^1}{Tf}) ({Z^1}{Ts})^{-1} )$ belongs to 
$\Hom _{\underline{\Morph}^{\mrm{e}}(\CM R)} ( {Z^1 \lambda _L}, {Z^1 \lambda _M} )$.  

\begin{prop}\label{quasi-inverse}
The operation ${Z^1 \lambda }$ induces a functor 
$\underline{Z^1 \lambda } :  (\cat{K}^{\infty,\mrm{b}}/ \cat{K}^{\mrm{b}})(\pj {R}) \\
\to \underline{\Morph}^{\mrm{e}}(\CM R)$ 
which is the quasi-inverse functor of $\underline{F}$. 
\end{prop}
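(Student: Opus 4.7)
The plan is to verify three things in turn: well-definedness of $Z^1\lambda$ on objects and morphisms, descent to the Verdier quotient and stable category, and the quasi-inverse property. For objects: $X_L$ and $T_L$ are acyclic complexes of finitely generated projectives, so $Z^1X_L, Z^1T_L$ lie in $\CM R$ by Proposition~\ref{cm}(1). Surjectivity of $\lambda_L^0$ combined with surjectivity of the differential $X_L^0 \twoheadrightarrow Z^1X_L$ (from acyclicity of $X_L$) gives surjectivity of $Z^1\lambda_L$ via the naturality square, so $Z^1\lambda_L \in \Morph^{\mrm{e}}(\CM R)$. For a morphism $f$ in $\cat{K}^{\infty,\mrm{b}}(\pj R)$, two chain-level representatives of $Xf$ differ by $d^0_{X_{L'}} \circ h^1$ on $Z^1X_L$, which factors through the projective $R$-module $X_{L'}^0$ and so vanishes in $\underline{\CM} R$; the same argument applies to $T$. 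Hence we obtain a functor $\cat{K}^{\infty,\mrm{b}}(\pj R) \to \underline{\Morph}^{\mrm{e}}(\CM R)$. For descent, any $L \in \cat{K}^{\mrm{b}}(\pj R)$ lies in $\cat{K}^{+,\mrm{b}} \cap \cat{K}^{-,\mrm{b}} = \opn{Im}l_* \cap \opn{Im}l_!$, so the triangles $X_L \to L \to l_*l^*L$ and $l_!l^*L \to L \to T_L$ force $X_L \simeq 0 \simeq T_L$ in $\cat{K}^{\infty,\emptyset}(\pj R)$. Equivalently, for $s$ with $C(s) \in \cat{K}^{\mrm{b}}$, both $Xs$ and $Ts$ become invertible in $\cat{K}^{\infty,\emptyset}(\pj R)$, so the roof formula in the proposition's preamble defines a functor $\underline{Z^1\lambda}: (\cat{K}^{\infty,\mrm{b}}/\cat{K}^{\mrm{b}})(\pj R) \to \underline{\Morph}^{\mrm{e}}(\CM R)$.

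Since $\underline{F}$ is already a triangle equivalence by Theorem~\ref{mth}, for the quasi-inverse property it suffices to construct a natural isomorphism $\underline{Z^1\lambda} \circ \underline{F} \simeq \mathrm{Id}_{\underline{\Morph}^{\mrm{e}}(\CM R)}$. Given $\alpha: X_\alpha \to T_\alpha$, I would take $X_{F_\alpha} := F_{X_\alpha}$ together with the natural chain map $F_{X_\alpha} \to F_\alpha$ that is the identity in degrees $\leq 0$ and extended in positive degrees using the injectivity of projective modules inside the Frobenius category $\CM R$; its cone lies in $\cat{K}^{+,\mrm{b}}(\pj R)$, confirming this choice as a valid representative of $X_{F_\alpha}$. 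Dually take $T_{F_\alpha} := F_{T_\alpha}$. Since $d^0_{F_\alpha} = \epsilon_{T_\alpha}\alpha\rho_{X_\alpha}$, the induced composite $Z^1F_{X_\alpha} = X_\alpha \to Z^1F_{T_\alpha} = T_\alpha$ recovers $\alpha$.

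The main obstacle is the naturality of the isomorphism $Z^1\lambda_{F_\alpha} \simeq \alpha$ in $\alpha$, since the chain-level lifts depend on non-canonical choices. To avoid this difficulty, I would invoke Proposition~\ref{p20Apr30} instead: verify that $\underline{Z^1\lambda}$ sends the triangle of recollements of Theorem~\ref{st-h3} to that of Theorem~\ref{st-st2} (which follows directly from the construction together with the descent verified above), and then check that it restricts to an equivalence on one piece. For acyclic $L \in \cat{K}^{\infty,\emptyset}(\pj R)$ we have $X_L = L = T_L$ and $\lambda_L = \mathrm{id}_L$, so $\underline{Z^1\lambda}\vert_{\cat{K}^{\infty,\emptyset}(\pj R)}$ factors as the equivalence $Z^1: \cat{K}^{\infty,\emptyset}(\pj R) \to \underline{\CM} R$ (a shifted analog of Proposition~\ref{cm}(2)) followed by the equivalence $X \mapsto 1_X: \underline{\CM} R \to \underline{\Morph}^{11}(\CM R)$; Proposition~\ref{p20Apr30} then upgrades this to the full equivalence.
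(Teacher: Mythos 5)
Your preparatory steps are sound and match what the paper sets up in the paragraph preceding the proposition: $Z^1X_L,Z^1T_L\in\CM R$, surjectivity of $Z^1\lambda_L$, independence of the chain-level lifts modulo maps factoring through projectives, and the vanishing of $X_L$ and $T_L$ for $L\in\cat{K}^{\mrm{b}}(\pj R)$ so that the roof formula descends to the quotient. The gap is in the final step. You correctly observe that it suffices to produce a \emph{natural} isomorphism $\underline{Z^1\lambda}\circ\underline{F}\simeq\mathrm{Id}$, but you then replace this by an appeal to Proposition \ref{p20Apr30}, which can at best show that $\underline{Z^1\lambda}$ is \emph{an} equivalence --- not that it is \emph{the} quasi-inverse of $\underline{F}$, which is precisely what the proposition asserts. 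Two equivalences between the same pair of categories need not be quasi-inverse to each other (compose one with a non-trivial autoequivalence that fixes objects up to isomorphism), and an object-by-object isomorphism $Z^1\lambda_{F_\alpha}\simeq\alpha$ without naturality does not upgrade to a natural one merely because both functors are equivalences. There is also a secondary gap: Proposition \ref{p20Apr30} applies to \emph{triangle} functors, and you have not shown that $\underline{Z^1\lambda}$ carries a triangulated structure.

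The naturality difficulty you ran into disappears if you compare the composite in the other order, which is what the paper does. For each $L$ one may choose $F_{Z^1\lambda_L}$ to be the complex glued from $\tau_{\leq 0}X_L$ and $\tau_{\geq 1}T_L$ (these are a projective resolution of $Z^1X_L$ and a projective coresolution of $Z^1T_L$ since $X_L$ and $T_L$ are acyclic); the adjunction arrows $\xi_L:X_L\to L$ and $\zeta_L:L\to T_L$, which \emph{are} natural in $L$ and have cones in $\cat{K}^{+,\mrm{b}}(\pj R)$ and $\cat{K}^{-,\mrm{b}}(\pj R)$ respectively, then yield an isomorphism $\underline{F_{Z^1\lambda_L}}\simeq \underline{L}$ in $(\cat{K}^{\infty,\mrm{b}}/\cat{K}^{\mrm{b}})(\pj R)$, i.e.\ a natural isomorphism $\underline{F}\circ\underline{Z^1\lambda}\simeq\mathrm{Id}$. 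Composing on the left with the quasi-inverse $\underline{G}$ of $\underline{F}$ (which exists by Theorem \ref{mth}) gives $\underline{Z^1\lambda}\simeq\underline{G}\,\underline{F}\,\underline{Z^1\lambda}\simeq\underline{G}$, which is the assertion and also shows a posteriori that $\underline{Z^1\lambda}$ is a triangle functor.
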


\begin{proof}
Let $\underline{G}$ be the quasi-inverse of $\underline{F}$. 
Obviously $\underline{F _{Z^1 \lambda _L}}$ is isomorphic to $L$ in $(\cat{K}^{\infty,\mrm{b}}/ \cat{K}^{\mrm{b}})(\pj {R})$ 
for every object $L$. Hence we have $\underline{G}(L) = \underline{G}(F_{Z^1 \lambda _L}) = 
\underline{G}\underline{F}(Z^1 {\lambda _L }) =Z^1 {\lambda _L } $. 
Similarly we have $\underline{G}(f/s) = (({Z^1}{Xf}) ({Z^1 Xs})^{-1}, \\
({Z^1}{Tf}) ({Z^1}{Ts})^{-1} )$ for 
a morphism $L \stackrel{s}{\leftarrow} L' \stackrel{f}{\rightarrow} M$  in 
$\Hom _{(\cat{K}^{\infty,\mrm{b}}/ \cat{K}^{\mrm{b}})(\pj {R}) } (L,M)$. 
\end{proof}

\section{Homotopy categories of infinitely generated modules} \label{secLCM} 

In the case that a Iwanaga-Gorenstein ring $R$ has a two-sided injective resolution,
we study homotopy categories of infinitely generated projective modules and 
give the triangle equivalence analogous to one in Theorem \ref{mth}. 
Also in this infinite case, the key is a triangle of recollements but 
is obtained by a quite different way from in the finite case. 
Namely we use the equivalence between $\cat{K} (\Pj {R})$ and $\cat{K} (\Ij {R})$ \cite{IK}, 
which gives us not only a projective version but also injective version of the result. 
Recall that we say that $R$ has a two-sided injective resolution if
there is an $R$-bimodule complex $V$ which is
an injective resolution of $R$ as right $R$-modules and as left $R$-modules.

The main result of this section is the following. 

\begin{thm}\label{Bmth}
Let $R$ be an Iwanaga-Gorenstein ring with a finite two-sided injective resolution. 
Then there is a triangle equivalence from $\underline{\LCM}\opn{T}_2(R)$
to \\
$(\cat{K}^{\infty,\mrm{b}}/\cat{K}^{\mrm{b}})(\Pj {R})$. 
\end{thm}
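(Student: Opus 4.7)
The plan is to parallel the chain of arguments that established Theorem~\ref{mth}, adapted to the large-projective setting. First I would develop the large analog of the morphism-category description. Define $\Morph^{\mrm{e}}(\LCM R)$ to consist of surjections $\alpha\colon X \epto T$ with $X,T\in\LCM R$, and $\Morph^{\mrm{m}}(\LCM R)$ analogously using monomorphisms with $X,T,\opn{Cok}\alpha\in\LCM R$. The proof of Proposition~\ref{CMT2} uses only that $R$-projectives are projective-injective in $\LCM R$ (Lemma~\ref{Lem:IwG} and its $\LCM$ version) together with a resolution argument that works unchanged for large modules, so it yields $\underline{\LCM}\opn{T}_2(R)\simeq\underline{\Morph}^{\mrm{m}}(\LCM R)\simeq\underline{\Morph}^{\mrm{e}}(\LCM R)$. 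Likewise the proof of Theorem~\ref{st-st2} carries over verbatim to produce a triangle of recollements
\[(\underline{\Morph}^{01}(\LCM R),\underline{\Morph}^{10}(\LCM R),\underline{\Morph}^{11}(\LCM R))\]
in $\underline{\Morph}^{\mrm{e}}(\LCM R)$, since it only needed existence of projective covers and coresolutions in the Frobenius category, which are available for $\LCM R$.

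Second, on the homotopy side I invoke the triangle of recollements
\[((\cat{K}^{+,\mrm{b}}/\cat{K}^{\mrm{b}})(\Pj R),(\cat{K}^{-,\mrm{b}}/\cat{K}^{\mrm{b}})(\Pj R),\cat{K}^{\infty,\emptyset}(\Pj R))\]
in $(\cat{K}^{\infty,\mrm{b}}/\cat{K}^{\mrm{b}})(\Pj R)$ asserted in the introduction (Theorem~\ref{st-h31}), whose proof, by analogy with Theorem~\ref{st-h3}, reduces to the large analog of Lemma~\ref{st-h4}: that $(\cat{K}^{\infty,\emptyset}(\Pj R),\cat{K}^{+,\mrm{b}}(\Pj R))$ is a stable $t$-structure in $\cat{K}^{\infty,\mrm{b}}(\Pj R)$. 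This is the main obstacle, because the self-duality $\Hom_R(-,R)$ used in Lemma~\ref{st-h4} is unavailable for $\Pj R$. Here the two-sided injective resolution hypothesis is crucial: it provides, via the Iyama--Krause equivalence $\cat{K}(\Pj R)\simeq\cat{K}(\Ij R)$ from \cite{IK}, a substitute for Hom duality that swaps the roles of bounded-above and bounded-below projective complexes and thereby produces the needed stable $t$-structure.

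Third, I construct a triangle functor $\underline{F}\colon\underline{\Morph}^{\mrm{e}}(\LCM R)\to(\cat{K}^{\infty,\mrm{b}}/\cat{K}^{\mrm{b}})(\Pj R)$ by the same recipe as in Lemmas~\ref{F}--\ref{K665}: send $\alpha\colon X_\alpha\epto T_\alpha$ to the complex obtained by splicing a large-projective resolution of $X_\alpha$ with a large-projective coresolution of $T_\alpha$ (which exists by the assumption $\opn{idim}R_R<\infty$ and the two-sided resolution) via $\alpha$. All functoriality, well-definedness and triangulated-structure arguments in Section~\ref{CMMorph} are formal manipulations with projective (co)resolutions that pass to the large setting without modification.

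Finally, by construction $\underline{F}$ sends the three defining subcategories $\underline{\Morph}^{01},\underline{\Morph}^{10},\underline{\Morph}^{11}$ into $(\cat{K}^{+,\mrm{b}}/\cat{K}^{\mrm{b}})(\Pj R),(\cat{K}^{-,\mrm{b}}/\cat{K}^{\mrm{b}})(\Pj R),\cat{K}^{\infty,\emptyset}(\Pj R)$ respectively, hence sends the first triangle of recollements to the second. The restriction $\underline{F}|_{\underline{\Morph}^{11}(\LCM R)}$ is essentially the composition of the tautological equivalence $\underline{\LCM R}\simeq\underline{\Morph}^{11}(\LCM R)$ with the equivalence $\underline{\LCM R}\simeq\cat{K}^{\infty,\emptyset}(\Pj R)$ given by $Z^0$ (the $\LCM$ analog of Proposition~\ref{cm}(2)), so it is a triangle equivalence, exactly as in Lemma~\ref{st-eq2bis}. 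Proposition~\ref{p20Apr30} then upgrades $\underline{F}$ itself to a triangle equivalence, which is the desired conclusion.
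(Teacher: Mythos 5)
Your proposal is correct and follows essentially the same route as the paper: the large analogs of the morphism-category results (Proposition \ref{BCMT2}, Theorem \ref{Bst-st2}), the triangle of recollements in $(\cat{K}^{\infty,\mrm{b}}/\cat{K}^{\mrm{b}})(\Pj R)$ obtained via the Iyengar--Krause equivalence $\cat{K}(\Pj R)\simeq\cat{K}(\Ij R)$ as a substitute for the $\Hom_R(-,R)$ duality (Lemma \ref{st-h41}, Theorem \ref{st-h31}), the spliced functor $\underline{F}$, the restriction to $\underline{\Morph}^{11}(\LCM R)$ via $Z^0$, and the upgrade by Proposition \ref{p20Apr30}. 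You correctly identified the two-sided injective resolution as the hypothesis that makes the $\Pj R$ case work where the finite-rank duality argument fails.
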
 
 
\begin{lem}\label{Lem:2Ijres}
Let $R$ be an Iwanaga-Gorenstein ring.
If $R$ has a two-sided injective resolution, then so does $\opn{T}_2(R)$.
\end{lem}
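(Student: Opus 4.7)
The plan is to construct a candidate two-sided injective resolution of $\opn{T}_2(R)$ directly in matrix form from the given two-sided injective resolution $V$ of $R$. Set
\[
W := \opn{T}_2(V) = \begin{pmatrix} V & V \\ 0 & V \end{pmatrix},
\]
regarded as a complex of $\opn{T}_2(R)$-bimodules via entry-wise matrix multiplication, with differential $d_W$ induced entry-wise from $d_V$. Since $d_V$ is an $R$-bimodule map, $d_W$ is a $\opn{T}_2(R)$-bimodule map, and the entry-wise extension of the quasi-isomorphism $R \to V$ yields a quasi-isomorphism $\opn{T}_2(R) \to W$ of $\opn{T}_2(R)$-bimodule complexes; in particular $W$ resolves $\opn{T}_2(R)$ on both sides.

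The core step is to verify that each term $W^i = \opn{T}_2(V^i)$ is injective as a right $\opn{T}_2(R)$-module and as a left $\opn{T}_2(R)$-module. For the right side I would use the equivalence $\Mod\opn{T}_2(R)\simeq\Morph(\Mod R)$ of Proposition \ref{T2} and compute, by running the matrix action of the idempotents $e_{11},e_{22}$ and of $e_{12}$, that $W^i$ corresponds to the morphism
\[
V^i \to V^i\oplus V^i, \qquad v \mapsto (v,0),
\]
which splits as $(V^i\xarr{1}V^i)\oplus(0\to V^i)$. It is straightforward to check that objects of the form $E\xarr{1}E$ and $0\to E$ with $E$ injective in $\Mod R$ are injective in $\Morph(\Mod R)$; together with the right-injectivity of $V^i$ over $R$ this gives the injectivity of $W^i$ as a right $\opn{T}_2(R)$-module. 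The left-sided assertion is symmetric via the transpose identification $\opn{T}_2(R)^{\mrm{op}}\cong\opn{T}_2(R^{\mrm{op}})$ and the left-injectivity of $V^i$.

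The main (mild) obstacle is the bookkeeping needed to match the matrix bimodule structure on $W$ with the morphism-category picture on each side. Once this is pinned down, the termwise injectivity on both sides is immediate, and combining it with the bimodule quasi-isomorphism $\opn{T}_2(R)\to W$ yields the desired two-sided injective resolution of $\opn{T}_2(R)$.
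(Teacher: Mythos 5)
There is a genuine error at the core step. Under the equivalence $\Mod\opn{T}_2(R)\simeq\Morph(\Mod R)$ your complex $W^i=\opn{T}_2(V^i)$ does correspond to $(V^i\xrightarrow{1}V^i)\oplus(0\to V^i)$, and $(E\xrightarrow{1}E)$ is indeed injective for $E$ injective; but the claim that $(0\to E)$ is injective in $\Morph(\Mod R)$ is false. The objects $(0\to P)$ and $(P\xrightarrow{1}P)$ are the shapes of the \emph{projectives} (as the decomposition $\opn{T}_2(R)=e_{11}\opn{T}_2(R)\oplus e_{22}\opn{T}_2(R)\cong(R\xrightarrow{1}R)\oplus(0\to R)$ shows), while the injectives are the summands of $(E\xrightarrow{1}E)\oplus(E'\to 0)$: one has $\Hom_{\Morph}\bigl((X\xrightarrow{\alpha}Y),(0\to E)\bigr)\cong\Hom_R(\opn{Cok}\alpha,E)$, and $\opn{Cok}$ is only right exact. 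Concretely, the exact sequence
\begin{equation*}
0\to(0\to E)\xrightarrow{(0,1_E)}(E\xrightarrow{1}E)\xrightarrow{(1_E,0)}(E\to 0)\to 0
\end{equation*}
does not split (a splitting $(f,g):(E\to 0)\to(E\xrightarrow{1}E)$ would need $f=0$ from the commutativity condition and $f=1_E$ from being a section), so $\Ext^1_{\Morph}\bigl((E\to 0),(0\to E)\bigr)\neq 0$ and $(0\to E)$ is not injective. Hence $W^i$ contains the non-injective summand $(0\to V^i)$, $\opn{T}_2(V)$ is not a termwise injective complex, and your construction does not produce an injective resolution.

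This is not a cosmetic issue: fixing it forces essentially the paper's construction. To coresolve the summand $(0\to R)$ of $\opn{T}_2(R)$ you must embed it into something of the form $(V^0\xrightarrow{1}V^0)$, whose cokernel then involves $V^0/R$ and a shifted copy of the $V^\bullet$'s; iterating produces the extra $V^{i-1}$ appearing in the lower-left corner of the paper's $i$-th term $\left(\begin{smallmatrix}V^i&V^i\\ V^{i-1}\oplus V^i&V^i\end{smallmatrix}\right)$ and the additional final term $\left(\begin{smallmatrix}0&0\\ V^n&0\end{smallmatrix}\right)$, so the resolution of $\opn{T}_2(R)$ is one step longer than that of $R$. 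Your quasi-isomorphism $\opn{T}_2(R)\to\opn{T}_2(V)$ of bimodule complexes is fine as far as it goes, but it is not the required two-sided \emph{injective} resolution.
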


\begin{proof}
We only prove the latter statement.
If $0 \to R \to V^{0} \xarr{d^0} V^{1} \xarr{d^1} \cdots V^{n} \to 0$ is a two-sided injective resolution,  then
{\small
\[
0 \to
\opn{T}_{2}(R) \to
\left(\begin{smallmatrix}V^{0} & V^{0} \\V^{0} & V^{0}\end{smallmatrix}\right)\  
\xarr{\Delta^{0}}
\left(\begin{smallmatrix} V^{1} & V^{1} \\V^{0}\oplus V^{1} & V^{1}\end{smallmatrix}\right)\  \xarr{\Delta^{1}}
\cdots \to 
\left(\begin{smallmatrix} V^{n} & V^{n} \\V^{n-1}\oplus V^{n} & V^{n}\end{smallmatrix}\right)\  \to
\left(\begin{smallmatrix}0 & 0 \\V^{n} & 0\end{smallmatrix}\right)\  \to 0
\]
}
is a two-sided injective resolution, where any
$
\left(\begin{smallmatrix} V^{i} & V^{i} \\
V^{i-1}\oplus V^{i} & V^{i}\end{smallmatrix}\right)\
$
is an ${\opn{T}_2(R)}$-bimodule by the following actions
\[\begin{aligned}
\left(\begin{smallmatrix}v_{11}^{i} & v_{12}^{i} \\
(u_{21}^{i-1},v_{21}^{i}) & v_{22}^{i}\end{smallmatrix}\right)\
\left(\begin{smallmatrix}a & b \\
0 & c\end{smallmatrix}\right)
=
\left(\begin{smallmatrix}v_{11}^{i}a & v_{11}^{i}b+v_{12}^{i}c \\
(u_{21}^{i-1}a,v_{21}^{i}a) & v_{21}^{i}b+v_{22}^{i}c\end{smallmatrix}\right)\
\\
\left(\begin{smallmatrix}a & b \\
0 & c\end{smallmatrix}\right)
\left(\begin{smallmatrix}v_{11}^{i} & v_{12}^{i} \\
(u_{21}^{i-1},v_{21}^{i}) & v_{22}^{i}\end{smallmatrix}\right)\
=
\left(\begin{smallmatrix}av_{11}^{i}+bv_{21}^{i} & av_{12}^{i}+bv_{22}^{i} \\
(cu_{21}^{i-1},cv_{21}^{i}) & cv_{22}^{i}\end{smallmatrix}\right)\
\end{aligned}\]
for any
$\left(\begin{smallmatrix}v_{11}^{i} & v_{12}^{i} \\
(u_{21}^{i-1},v_{21}^{i}) & v_{22}^{i}\end{smallmatrix}\right)\
\in \left(\begin{smallmatrix} V^{i} & V^{i} \\
V^{i-1}\oplus V^{i} & V^{i}\end{smallmatrix}\right)\ ,
\left(\begin{smallmatrix}a & b \\
0 & c\end{smallmatrix}\right)
\in T_{2}(R)$,
and
\[\Delta^{i}=
\left(\begin{smallmatrix} d^{i} & d^{i} \\
\partial^{i} & d^{i}\end{smallmatrix}\right)\
:
\left(\begin{smallmatrix} V^{i} & V^{i} \\
V^{i-1}\oplus V^{i} & V^{i}\end{smallmatrix}\right)\
\to
\left(\begin{smallmatrix} V^{i+1} & V^{i+1} \\
V^{i}\oplus V^{i+1} & V^{i+1}\end{smallmatrix}\right)\
\]
is a differential with $\partial^{i}=\left(\begin{smallmatrix} -d^{i-1} & 1 \\
0 & d^{i}\end{smallmatrix}\right)\:V^{i-1}\oplus V^{i} \to V^{i}\oplus V^{i+1}$.
\end{proof}

\begin{exmp}\label{dualizing1}
(1) A commutative ring obviously has a two-sided injective resolution.

(2) Let $k$ be a commutative ring, $A$ a $k$-algebra which is projective as a $k$-module.
According to \cite{Mi4}, $A$ has a two-sided injective resolution.
\end{exmp}

\begin{lem}\label{st-h41}
Let $R$ be an Iwanaga-Gorenstein ring.
If $R$ has a two-sided injective resolution, then
the following hold.
\begin{enumerate}
\item
$(\cat{K}^{\infty ,\emptyset}(\Pj {R}), \cat{K}^{+,\mrm{b}}(\Pj {R}))$ is
a stable t-structure in $\cat{K}^{\infty ,\mrm{b}}(\Pj {R})$. 
\item
$(\cat{K}^{-,\mrm{b}}(\Ij {R}), \cat{K}^{\infty ,\emptyset}(\Ij {R}))$ is
a stable t-structure in $\cat{K}^{\infty ,\mrm{b}}(\Ij {R})$.
\end{enumerate}
\end{lem}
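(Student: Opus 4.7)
The plan is to reduce to the known stable t-structures from Propositions \ref{st-st0} and \ref{st-st1f} by transporting them across the triangle equivalence $\cat{K}(\Pj R) \simeq \cat{K}(\Ij R)$ of [IK], which is available precisely because $R$ admits a two-sided injective resolution $V$. Heuristically, this equivalence supplies the symmetry between projectives and injectives that the dualization $\Hom_R(-,R)$ provided in Lemma \ref{st-h4} for the finitely generated case.

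First I would invoke the triangle equivalence $\Phi \colon \cat{K}(\Pj R) \simeq \cat{K}(\Ij R)$ from [IK] coming from $V$. Next I would verify that $\Phi$ restricts to triangle equivalences
\[
\cat{K}^{\infty,\mrm{b}}(\Pj R) \simeq \cat{K}^{\infty,\mrm{b}}(\Ij R),\quad
\cat{K}^{\infty,\emptyset}(\Pj R) \simeq \cat{K}^{\infty,\emptyset}(\Ij R),
\]
\[
\cat{K}^{+,\mrm{b}}(\Pj R) \simeq \cat{K}^{+,\mrm{b}}(\Ij R),\quad
\cat{K}^{-,\mrm{b}}(\Pj R) \simeq \cat{K}^{-,\mrm{b}}(\Ij R).
\]
Since $V$ is a bounded complex of $R$-bimodules quasi-isomorphic to $R$ on both sides, $\Phi$ preserves homology up to natural isomorphism, while the finite amplitude of $V$ ensures that the underlying boundedness of a complex is shifted by at most a fixed constant. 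The former yields the matching of $\cat{K}^{\infty,\mrm{b}}$ and $\cat{K}^{\infty,\emptyset}$, while the latter yields the matching of $\cat{K}^{+,\mrm{b}}$ and $\cat{K}^{-,\mrm{b}}$.

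Once this compatibility is in hand, part (1) follows by taking the stable t-structure $(\cat{K}^{\infty,\emptyset}(\Ij R),\cat{K}^{+,\mrm{b}}(\Ij R))$ in $\cat{K}^{\infty,\mrm{b}}(\Ij R)$ supplied by Proposition \ref{st-st0}(1) (applied to $\mcal{A}=\Mod R$) and pulling it back along $\Phi^{-1}$; the transport of a stable t-structure under a triangle equivalence is tautological, since all three defining axioms are preserved. Dually, part (2) follows by pushing the stable t-structure $(\cat{K}^{-,\mrm{b}}(\Pj R),\cat{K}^{\infty,\emptyset}(\Pj R))$ from Proposition \ref{st-st1f}(1) forward along $\Phi$ to obtain the claimed structure in $\cat{K}^{\infty,\mrm{b}}(\Ij R)$.

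The main obstacle is the middle step, namely showing that $\Phi$ restricts to each of the four subcategories above. The identifications for $\cat{K}^{\infty,\mrm{b}}$ and $\cat{K}^{\infty,\emptyset}$ are relatively soft because only homology must be tracked, but the $\cat{K}^{+,\mrm{b}}$ and $\cat{K}^{-,\mrm{b}}$ compatibilities require actual control over boundedness of the complexes themselves. This forces one to inspect the construction of $\Phi$ in [IK] and to use the finite length of the two-sided injective resolution $V$ in an essential way; this is also precisely the place where the hypothesis of a two-sided injective resolution, absent in Lemma \ref{st-h4}, enters the argument.
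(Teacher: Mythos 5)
Your proposal is correct and follows essentially the same route as the paper: both transport the stable t-structures of Propositions \ref{st-st0} and \ref{st-st1f} across the Iyengar--Krause equivalence $-\otimes_R V\colon \cat{K}(\Pj R)\to\cat{K}(\Ij R)$, with the key step being that this equivalence restricts to $\cat{K}^{\infty,\mrm{b}}$, $\cat{K}^{\infty,\emptyset}$, $\cat{K}^{+,\mrm{b}}$ and $\cat{K}^{-,\mrm{b}}$. The paper discharges that middle step by citing the specific results of [IK] (Theorem 2.7, Propositions 3.4, 5.9, 5.12, and the identification of $\cat{K}^{\infty,\emptyset}$ with totally acyclic complexes), exactly where you flag that one must inspect the construction and use the finiteness of $V$.
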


\begin{proof}
Let $V$ be a two-sided injective resolution of $R$, then $V$ is a dualizing complex.
According to \cite{IK} Theorem 4.2 and 4.8,  
$G=-\ten_{R}V:\cat{K}(\Pj R) \to \cat{K}(\Ij R)$
is a triangle equivalence. Also \cite{IK} 
Theorem 2.7, Proposition 3.4, Proposition 5.9 and 5.12 show that 
$G$ restricts to equivalences 
\[\begin{aligned}
\cat{K}^{\infty, \mrm{b}}(\Pj R) \to \cat{K}^{\infty, \mrm{b}}(\Ij R), 
& \cat{K}^{\infty, \emptyset}(\Pj R) \to \cat{K}^{\infty, \emptyset}(\Ij R), \\
\cat{K}^{+,\mrm{b}}(\Pj R) \to \cat{K}^{+,\mrm{b}}(\Ij R), 
& \cat{K}^{-,\mrm{b}}(\Pj R) \to \cat{K}^{-,\mrm{b}}(\Ij R).
\end{aligned}\]
According to \cite{IK} Corollary 5.5 and 5.12, 
$\cat{K}^{\infty, \emptyset}(\Pj R) 
$ and 
$\cat{K}^{\infty, \emptyset}(\Ij R) 
$ 
coincide with the homotopy categories of totally acyclic complexes of
projective modules and injective modules, respectively.
Applying $G$, we obtain (1) from  Proposition \ref{st-st0} and (2) from 
Proposition \ref{st-st1f}. 
\end{proof}

\begin{prop}\label{lcm} 
The following hold.
\begin{enumerate}
\rmitem{1} $\LCM R=\{Z^0(X)\ |\ X\in\cat{C} (\Pj R), \mrm{H}^i (X) =0 ~(i \in \mathbb{Z} ) \}$.
\rmitem{2} $\underline{\LCM} R\simeq\cat{K}^{\infty,\emptyset}(\Pj R)$.
\end{enumerate}
\end{prop}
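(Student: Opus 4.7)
The plan is to imitate the proof of Proposition~\ref{cm}, substituting Lemma~\ref{st-h41} for Lemma~\ref{st-h4} and taking care with Ext-vanishing against arbitrary (not necessarily finitely generated) projectives.

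For~(1)~$\supseteq$, the proof of Lemma~\ref{st-h41} via \cite{IK} identifies $\cat{K}^{\infty,\emptyset}(\Pj R)$ with the homotopy category of totally acyclic complexes of projectives. Total acyclicity means $\Hom_R(X,P)$ is acyclic for every $P\in\Pj R$, which by standard dimension shifting yields $\Ext^i_R(Z^0(X),P)=0$ for all $i>0$, so $Z^0(X)\in\LCM R$.

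For the converse, take $M\in\LCM R$ with projective resolution $P^\bullet\to M$, viewed as an object of $\cat{K}^{-,\mrm{b}}(\Pj R)\subset\cat{K}^{\infty,\mrm{b}}(\Pj R)$, and apply Lemma~\ref{st-h41}(1) to get a triangle $U\to P^\bullet\to S\to\Sigma U$ with $U\in\cat{K}^{\infty,\emptyset}(\Pj R)$ and $S\in\cat{K}^{+,\mrm{b}}(\Pj R)$. Then $S$ is quasi-isomorphic to $M$, with $S^i=0$ for $i<a$ say, so $C^0:=\opn{Cok}(S^{-1}\to S^0)$ admits the finite projective resolution $S^a\to\cdots\to S^0\to C^0\to 0$. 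Once we show $C^0\in\Pj R$, splicing the projective resolution of $M$ with the coresolution $C^0\to S^1\to S^2\to\cdots$ along the canonical monomorphism $M\hookrightarrow C^0$ gives an acyclic complex of projectives whose zeroth cocycle is $M$.

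The crux is thus the projectivity of $C^0$. Since $\LCM R$ is closed under extensions (immediate from the long exact Ext sequence) and $\LCM R$ intersected with finite-projective-dimension modules equals $\Pj R$ (standard inductive splitting), it suffices to show $C^0\in\LCM R$. The short exact sequence $0\to M\to C^0\to K\to 0$ with $K=\ker(d^1_S)$ reduces this to $K\in\LCM R$. Writing $K_i=\ker(d^i_S)$ for $i\geq 1$, the vanishing $H^{\geq 1}(S)=0$ produces short exact sequences $0\to K_i\to S^i\to K_{i+1}\to 0$, and iterating the resulting dimension shift gives $\Ext^j_R(K_i,P)\cong\Ext^{j+m}_R(K_{i+m},P)$ for all $m\geq 0$. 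Bass's theorem (applicable since $R$ is Noetherian) ensures that direct sums of injectives are injective, so every $P\in\Pj R$, being a summand of a free module, satisfies $\opn{idim}P\leq d:=\opn{idim}R_R<\infty$; hence $\Ext^j_R(-,P)=0$ for $j>d$, and choosing $m>d$ kills the right-hand side, giving $K_i\in\LCM R$ for all $i$.

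Part~(2) then reduces to verifying that the cocycle functor $Z^0:\cat{K}^{\infty,\emptyset}(\Pj R)\to\underline{\LCM} R$ is a triangle equivalence: essential surjectivity is~(1), and full faithfulness is a routine adaptation of the finitely generated case in Proposition~\ref{cm}(2), using that $\Pj R$ is the projective-injective class for the Frobenius structure on $\LCM R$. The main obstacle I anticipate is the bookkeeping in the claim $K\in\LCM R$: propagating Ext-vanishing through the infinite tower of cosyzygies $K_i$ is the one ingredient that genuinely distinguishes the large-module case from the finitely generated setting treated in Proposition~\ref{cm}.
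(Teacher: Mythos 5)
Your proposal is correct and takes essentially the same route as the paper: the paper's entire proof of Proposition~\ref{lcm} is the remark that, by Lemma~\ref{st-h41}, the argument of Proposition~\ref{cm} carries over, and your write-up is exactly that adaptation (triangle from the stable t-structure, projectivity of $C^0$, splicing). The extra care you take at the points where the large-module case genuinely differs --- vanishing of $\Ext$ against all of $\Pj R$ rather than just $R$, total acyclicity via \cite{IK}, and $\opn{idim}P<\infty$ for arbitrary projectives via the Noetherian hypothesis --- is sound and is precisely what the paper's ``similar to'' is implicitly delegating to the reader.
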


\begin{proof}
By Lemma \ref{st-h41}, this proof is similar  to one of Lemma \ref{cm}.
\end{proof}

\begin{thm} \label{st-h31}
Let $R$ be an Iwanaga-Gorenstein ring with a two-sided injective resolution, then
the following hold.
\begin{enumerate}
\item
$((\cat{K}^{+,\mrm{b}}/\cat{K}^{\mrm{b}})(\Pj {R}), 
(\cat{K}^{-,\mrm{b}}/\cat{K}^{\mrm{b}})(\Pj {R}), 
\cat{K}^{\infty,\emptyset}(\Pj {R}))$ is a triangle of recollements
in $(\cat{K}^{\infty,\mrm{b}}/\cat{K}^{\mrm{b}})(\Pj {R})$.
\item
$((\cat{K}^{+,\mrm{b}}/\cat{K}^{\mrm{b}})(\Ij {R}), 
(\cat{K}^{-,\mrm{b}}/\cat{K}^{\mrm{b}})(\Ij {R}), 
\cat{K}^{\infty ,\emptyset}(\Ij {R}))$ is a triangle of recollements
in $(\cat{K}^{\infty ,\mrm{b}}/\cat{K}^{\mrm{b}})(\Ij {R})$.
\item
We have triangle equivalences
\[\begin{aligned}
(\cat{K}^{+,\mrm{b}}/\cat{K}^{\mrm{b}})(\Pj {R})\simeq 
(\cat{K}^{-,\mrm{b}}/\cat{K}^{\mrm{b}})(\Pj {R})\simeq
\cat{K}^{\infty ,\emptyset}(\Pj {R})\simeq \\
(\cat{K}^{+,\mrm{b}}/\cat{K}^{\mrm{b}})(\Ij {R})\simeq 
(\cat{K}^{-,\mrm{b}}/\cat{K}^{\mrm{b}})(\Ij {R})\simeq
\cat{K}^{\infty ,\emptyset}(\Ij {R})
\end{aligned}\]
\end{enumerate}
\end{thm}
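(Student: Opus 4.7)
The plan is to mirror the short proof of Theorem \ref{st-h3}, with Lemma \ref{st-h41} now playing the role that Lemma \ref{st-h4} played there.

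For part (1), two of the three required stable t-structures in $(\cat{K}^{\infty,\mrm{b}}/\cat{K}^{\mrm{b}})(\Pj R)$ come directly from Theorem \ref{st-st1}(2) applied to $\mcal{A}'=\Mod R$, namely
$((\cat{K}^{+,\mrm{b}}/\cat{K}^{\mrm{b}})(\Pj R),(\cat{K}^{-,\mrm{b}}/\cat{K}^{\mrm{b}})(\Pj R))$ and
$((\cat{K}^{-,\mrm{b}}/\cat{K}^{\mrm{b}})(\Pj R),\cat{K}^{\infty,\emptyset}(\Pj R))$. For the third one, I would start from the stable t-structure $(\cat{K}^{\infty,\emptyset}(\Pj R),\cat{K}^{+,\mrm{b}}(\Pj R))$ of $\cat{K}^{\infty,\mrm{b}}(\Pj R)$ supplied by Lemma \ref{st-h41}(1), and push it into the Verdier quotient via the \emph{in particular} clause of Proposition \ref{st-stK}: since $\cat{K}^{\mrm{b}}(\Pj R)$ is a thick subcategory of the second term $\cat{K}^{+,\mrm{b}}(\Pj R)$, this yields a stable t-structure $(\cat{K}^{\infty,\emptyset}(\Pj R),(\cat{K}^{+,\mrm{b}}/\cat{K}^{\mrm{b}})(\Pj R))$ in the quotient. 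Arranging these three stable t-structures cyclically produces a triangle of recollements by definition. Part (2) is strictly parallel: Theorem \ref{st-st1}(1) applied to $\mcal{A}=\Mod R$ delivers two of the stable t-structures, and Lemma \ref{st-h41}(2) combined with Proposition \ref{st-stK}, this time using the containment $\cat{K}^{\mrm{b}}(\Ij R)\subseteq\cat{K}^{-,\mrm{b}}(\Ij R)$ in the first slot of the pair, delivers the third.

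For part (3), the equivalences within each column are automatic from Proposition \ref{added}: in any triangle of recollements every component is triangle equivalent to the quotient of the ambient category by either of the other two. To bridge the projective and injective columns, I would invoke the triangle equivalence $G=-\ten_R V:\cat{K}(\Pj R)\to\cat{K}(\Ij R)$ of \cite{IK} already recalled in the proof of Lemma \ref{st-h41}; as listed there, $G$ restricts to equivalences on each of $\cat{K}^{\infty,\mrm{b}}$, $\cat{K}^{+,\mrm{b}}$, $\cat{K}^{-,\mrm{b}}$ and $\cat{K}^{\infty,\emptyset}$, and it sends $\cat{K}^{\mrm{b}}$ into $\cat{K}^{\mrm{b}}$ because $V$ is bounded, so it descends to equivalences of the quotient categories relevant to the statement. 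I do not foresee any serious obstacle: the argument is an assembly, the genuine work having been absorbed into Lemma \ref{st-h41} and the machinery of Section \ref{t-strecoll}. The only point that deserves a line of justification is that the bottom edge of Proposition \ref{st-stK} indeed applies with $\cat{K}^{\mrm{b}}$ as the thick subcategory, which holds because bounded complexes form a thick subcategory of $\cat{K}^{\infty,\mrm{b}}$.
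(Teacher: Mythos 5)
Your proposal is correct and follows essentially the same route as the paper: parts (1) and (2) are assembled from Theorem \ref{st-st1} together with Lemma \ref{st-h41} pushed through Proposition \ref{st-stK} (exactly as in the proof of Theorem \ref{st-h3}), and part (3) combines the intra-column equivalences coming from the triangle of recollements with the bridge $\cat{K}^{\infty,\emptyset}(\Pj R)\simeq\cat{K}^{\infty,\emptyset}(\Ij R)$ given by $G=-\ten_R V$ from the proof of Lemma \ref{st-h41}. The only cosmetic difference is that you cite Proposition \ref{added} where the paper cites Proposition \ref{t-st-first}(3); these carry the same content.
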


\begin{proof} (1)(2) It is immediate from Theorem \ref{st-st1} and Lemma \ref{st-h41}.  

(3) We know the equivalence 
$\cat{K}^{\infty ,\emptyset}(\Pj  {R}) \simeq \cat{K}^{\infty ,\emptyset}(\Ij {R})$ from 
the proof of Lemma \ref {st-h41}. Other equivalences are obtained by Proposition \ref{t-st-first} (3).
\end{proof}

Once we get a triangle of recollements in $(\cat{K}^{\infty,\mrm{b}}/\cat{K}^{\mrm{b}})(\Pj {R})$, 
the remaining part is just $\LCM$-versions of the proevious results.

\begin{defn} 
\begin{enumerate}
\item 
We denote by $\underline{\Morph}^{\mrm{e}}(\LCM R)$ the full subcategory                                                                                                                        
of \\
$\Morph(\Mod R)$ consisting of objects $\alpha:X \to T$ which is surjective and $X, T \in\LCM R$.

\item
We denote by $\underline{\Morph}^{01}(\LCM R)$ (resp., $\underline{\Morph}^{10}(\LCM R)$, $\underline{\Morph}^{01}(\LCM R)$)
the full subcategory of $\underline{\Morph}^{\mrm{e}}(\LCM R)$ consisting of objects
corresponding to the objects of the form $X \to 0$ (resp., $T \xarr{1} T$,
$P \to T$ with $P$ being projective) in $\Morph ^s (\LCM R)$. 

\item
We denote by $\underline{\Morph}^{\mrm{m}}(\LCM R)$ the full subcategory                                                                                                                        
of $\Morph(\Mod R)$ consisting of objects $\alpha:Z \to X$ which is injective and $Z, X, \opn{cok} \alpha \in\LCM R$.

\end{enumerate}
\end{defn}

\begin{prop}\label{BCMT2}
\begin{itemize}
\item[(1)] The equivalence in Proposition \ref{T2} induces an equivalence
\[\Morph^{\mrm{m}}(\LCM R)\simeq\LCM\opn{T}_2(R).\]
\item[(2)] $\alpha\mapsto\opn{cok}\alpha$ and $\alpha\mapsto\opn{ker}\alpha$ give mutually quasi-inverse equivalences
\[\xymatrix{\Morph^{\mrm{m}}(\LCM R)\ar@<-2ex>[r]^{\opn{cok}}&\Morph^{\mrm{e}}(\LCM R)\ar@<-1ex>[l]_{\opn{ker}}.}\]
\end{itemize}
\end{prop}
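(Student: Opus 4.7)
The plan is to run the proof of Proposition~\ref{CMT2} essentially unchanged, substituting $\Pj R$ for $\pj R$ and $\LCM$ for $\CM$ throughout. The two inputs that make this substitution legitimate are Lemma~\ref{Lem:2Ijres}, which ensures $\opn{T}_2(R)$ again has a two-sided injective resolution so that Proposition~\ref{lcm} applies to it, and Proposition~\ref{lcm} itself, providing the cocycle characterization $\LCM S=\{\mrm{Z}^0(X)\mid X\in\cat{C}(\Pj S),\ \mrm{H}^i(X)=0\}$ uniformly for $S=R$ and $S=\opn{T}_2(R)$.

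For part~(1), the first step is to observe that under the equivalence of Proposition~\ref{T2} any projective $\opn{T}_2(R)$-module corresponds to a split monomorphism between (not necessarily finitely generated) projective $R$-modules; this is verified on the two indecomposable summands of $\opn{T}_2(R)$ and extends to arbitrary coproducts. In the forward direction I take $\alpha:X\to T$ whose associated $\opn{T}_2(R)$-module lies in $\LCM\opn{T}_2(R)$, apply Proposition~\ref{lcm}(1) for $\opn{T}_2(R)$ to embed it as the zeroth cocycle of an acyclic complex of projective $\opn{T}_2(R)$-modules, and translate via Proposition~\ref{T2} into a commutative diagram of exact sequences
\[\begin{array}{ccccccc}
0\to&X&\to&X^0&\to&X^1&\to\cdots\\
&\downarrow^{\alpha}&&\downarrow^{\alpha^0}&&\downarrow^{\alpha^1}&\\
0\to&T&\to&T^0&\to&T^1&\to\cdots
\end{array}\]
in which each $\alpha^i$ is a split monomorphism of projective $R$-modules. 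From the rows one reads off $X,T\in\LCM R$ and injectivity of $\alpha$; the induced acyclic complex $0\to\opn{Cok}\alpha\to\opn{Cok}\alpha^0\to\cdots$ of cokernels has projective terms, so $\opn{Cok}\alpha\in\LCM R$ and $\alpha\in\Morph^{\mrm{m}}(\LCM R)$. Conversely, starting from $\alpha\in\Morph^{\mrm{m}}(\LCM R)$ one fixes acyclic complexes $X_\bullet,Y_\bullet\in\cat{C}(\Pj R)$ with $\mrm{Z}^0(X_\bullet)=X$ and $\mrm{Z}^0(Y_\bullet)=\opn{Cok}\alpha$ (Proposition~\ref{lcm}(1) for $R$) and runs the horseshoe construction displayed in the proof of Proposition~\ref{CMT2}: the resulting middle row corresponds under Proposition~\ref{T2} to an acyclic complex of projective $\opn{T}_2(R)$-modules with zeroth cocycle $\alpha$, placing it in $\LCM\opn{T}_2(R)$ by Proposition~\ref{lcm}(1) for $\opn{T}_2(R)$.

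Part~(2) is then formal: $\opn{cok}$ sends $\Morph^{\mrm{m}}(\LCM R)$ to $\Morph^{\mrm{e}}(\LCM R)$ by the very definition of the former, and conversely $\opn{ker}$ lands in $\Morph^{\mrm{m}}(\LCM R)$ because $\LCM R$ is closed under kernels of surjections between its objects (immediate from the $\Ext^{>0}(-,\Pj R)$-vanishing definition and the long exact sequence). Mutual inverseness then follows from the canonical short exact sequences $0\to\opn{ker}\alpha\to X\to T\to 0$ and $0\to X\to T\to\opn{cok}\alpha\to 0$. The one place where the infinite setting requires genuine attention---already the content of the opening observation in the proof of Proposition~\ref{CMT2}---is the description of projective $\opn{T}_2(R)$-modules as split monomorphisms of projective $R$-modules; once that is established in the $\Pj$-setting, the remainder of the argument is a line-by-line transcription of the $\CM$-proof.
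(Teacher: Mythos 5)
Your proposal is correct and follows exactly the route the paper takes: its entire proof of Proposition~\ref{BCMT2} is the remark that, by Lemma~\ref{Lem:2Ijres} and Proposition~\ref{lcm}, the argument of Proposition~\ref{CMT2} carries over verbatim with $\Pj R$ and $\LCM$ in place of $\pj R$ and $\CM$. Your expansion of the details (in particular the check that summands of coproducts of the indecomposable projective $\opn{T}_2(R)$-modules still correspond to split monomorphisms of projective $R$-modules, and the closure of $\LCM R$ under kernels of surjections) is exactly the content the paper leaves implicit.
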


\begin{proof}
By Lemmas \ref{Lem:2Ijres} and \ref{lcm}, this proof is similar to one of Proposition \ref{CMT2}.
\end{proof}

We also have the following $\LCM$-version of Theorem \ref{st-st2}.

\begin{thm} \label{Bst-st2}
Let $R$ be an Iwanaga-Gorenstein ring with a two-sided injective resolution.
Then we have the following.
\begin{itemize}
\item[(1)] $(\underline{\Morph}^{01}(\LCM R),\underline{\Morph}^{10}(\LCM R),
\underline{\Morph}^{11}(\LCM R))$ is a triangle of recollements in $\underline{\Morph}^{\mrm{e}}(\LCM R)$.
\item[(2)] There is a recollement
\[\xymatrix{
\underline{\Morph}^{10}(\LCM R)\ar@<-1ex>[r]^{i_{*}}
& \hspace{15pt}\underline{\Morph}^{\mrm{e}}(\LCM R)\hspace{15pt}
\ar@<-2ex>[l]_{i^{*}} \ar@<2ex>[l]^{i^{!}} \ar@<-1ex>[r]^{j^{*}} 
& \hspace{6pt}\underline{\Morph}^{\mrm{e}}(\LCM R)/\underline{\Morph}^{10}(\LCM R)
\ar@<-2ex>[l]_{j_{!}} \ar@<2ex>[l]^{j_{*}}
}\]
such that the essential image $\opn{Im}j_{!}$ (resp., $\opn{Im}j_{*}$)
of the functor $j_{!}$ (resp., $j_{*}$) is $\underline{\Morph}^{01}(\LCM R)$ 
(resp., $\underline{\Morph}^{11}(\LCM R)$).
\item[(3)] We have equivalences 
\[\underline{\LCM} R \simeq \underline{\Morph}^{10}(\LCM R)
\simeq \underline{\Morph}^{11}(\LCM R)
\simeq \underline{\Morph}^{01}(\LCM R).\]
\end{itemize}
\end{thm}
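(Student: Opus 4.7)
The plan is to follow mutatis mutandis the proof of Theorem \ref{st-st2}, replacing $\CM R$ by $\LCM R$ throughout. The crucial enabling fact is that $\LCM R$ is a Frobenius exact category whose subcategory of projective-injective objects is precisely $\Pj R$; this is the content of Proposition \ref{lcm}, which uses the two-sided injective resolution of $R$ to guarantee that every $X \in \LCM R$ fits into short exact sequences $0 \to X \to P \to \Sigma X \to 0$ and $0 \to \Sigma^{-1}X \to Q \to X \to 0$ with $P, Q \in \Pj R$ and $\Sigma^{\pm 1}X \in \LCM R$.

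For part (1), I would first verify the three $\Hom$-vanishing conditions between the subcategories $\underline{\Morph}^{01}(\LCM R)$, $\underline{\Morph}^{10}(\LCM R)$ and $\underline{\Morph}^{11}(\LCM R)$ by the same formal computation as in the $\CM$ case. Then for an arbitrary object $\alpha:X\to T$ in $\Morph^{\mrm{e}}(\LCM R)$, the required distinguished triangles arise in three steps. The canonical short exact sequence $0\to\opn{Ker}\alpha\to X\to T\to 0$ (whose kernel lies in $\LCM R$ automatically) yields a triangle $0_{\opn{Ker}\alpha}\to\alpha\to 1_T\to\Sigma 0_{\opn{Ker}\alpha}$ in $\underline{\Morph}^{\mrm{e}}(\LCM R)$, witnessing the stable $t$-structure between $\underline{\Morph}^{10}$ and $\underline{\Morph}^{11}$. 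Next, a Frobenius short exact sequence $0\to\Sigma^{-1}X\to Q\to X\to 0$ with $Q\in\Pj R$ pulls back to a triangle $\sigma\to\alpha\to 0_X\to\Sigma\sigma$ with $\sigma\in\underline{\Morph}^{01}(\LCM R)$, and a push-out along $0\to X\to P\to\Sigma X\to 0$ produces a triangle $1_X\to\alpha\to\tau\to\Sigma 1_X$ with $\tau\in\underline{\Morph}^{01}(\LCM R)$. Together these realize the three stable $t$-structures constituting a triangle of recollements.

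Part (2) is then immediate from part (1) together with Proposition \ref{added}. For part (3), the functor $\underline{\LCM} R \to \underline{\Morph}^{10}(\LCM R)$ sending $X$ to the zero morphism $(X\to 0)$ is easily checked to be a triangle equivalence, and the remaining equivalences among $\underline{\Morph}^{10}$, $\underline{\Morph}^{11}$ and $\underline{\Morph}^{01}$ follow from the general property that in a triangle of recollements all three defining subcategories are mutually equivalent.

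The only substantive point that must be checked carefully is the Frobenius structure on $\LCM R$ with $\Pj R$ as its projective-injective objects. In the large case the existence of the two-sided resolutions $0\to X\to P\to\Sigma X\to 0$ and $0\to\Sigma^{-1}X\to Q\to X\to 0$ is more delicate than in the finitely generated setting, and depends essentially on the equivalence $\cat{K}(\Pj R)\simeq\cat{K}(\Ij R)$ induced by the two-sided injective resolution (cf.\ the proof of Lemma \ref{st-h41} and Proposition \ref{lcm}). Once this foundational Frobenius structure is in place, every remaining step of the argument proceeds exactly as in the proof of Theorem \ref{st-st2}.
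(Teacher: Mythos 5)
Your proposal is correct and follows exactly the route the paper takes: the paper's own proof of Theorem \ref{Bst-st2} simply reads ``The same as the proof of Theorem \ref{st-st2},'' and your expansion (the three $\Hom$-vanishing conditions, the triangles $0_{\opn{Ker}\alpha}\to\alpha\to 1_T$, $\sigma\to\alpha\to 0_X$ and $1_X\to\alpha\to\tau$, then Propositions \ref{added} and \ref{p20Apr30}) is precisely that argument transported to $\LCM R$. You also correctly isolate the one point where the large case genuinely needs the two-sided injective resolution, namely that Proposition \ref{lcm} and Lemma \ref{st-h41} supply the Frobenius structure on $\LCM R$ with $\Pj R$ as projective-injectives.
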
 

\begin{proof}
The same as the proof of Theorem \ref{st-st2}.
\end{proof}

In the same way as in section \ref{trieqrecoll}, we have a functor 
$F: \Morph^{\mrm{e}}(\LCM R) \to \cat{K}^{\infty, \mrm{b}}(\Pj {R})$, 
and a triangle functor
$\underline{F} : \underline{\Morph}^{\mrm{e}}(\LCM R) \to 
({\sf K}^{\infty,\mrm{b}}/ {\sf K}^{\mrm{b}})(\Pj {R})$.

\begin{lem} \label{Bst-eq2bis}
The functor 
$\underline{F}\vert_{\underline{\Morph}^{11}(\LCM R)}:\underline{\Morph}^{11}(\LCM R) \to 
\cat{K}^{\infty,\emptyset}(\Pj {R})$
is a triangle equivalence.
\end{lem}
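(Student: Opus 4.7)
The plan is to follow verbatim the argument used for Lemma~\ref{st-eq2bis}, replacing $\CM$ by $\LCM$ and $\pj R$ by $\Pj R$; the infrastructure has been set up precisely so that this transfer is possible. First, I would define the functor $H:\LCM R \to \Morph^{11}(\LCM R)$ by $H(X) = (X \xarr{1_X} X)$ on objects and $H(f)=(f,f)$ on morphisms, then observe that it descends to a functor $\underline{H}:\underline{\LCM} R \to \underline{\Morph}^{11}(\LCM R)$. The key point is that $\underline{H}$ is a triangle equivalence: essential surjectivity is built into the very definition of $\underline{\Morph}^{11}(\LCM R)$; fully faithfulness at the stable level follows from the fact that the projective-injective objects of $\Morph^{\mrm{e}}(\LCM R)$ lying inside $\Morph^{11}(\LCM R)$ are exactly the identity morphisms $1_P$ with $P \in \Pj R$; and the triangulated structure is preserved because a conflation $0 \to X \to P \to \Sigma X \to 0$ in the Frobenius category $\LCM R$ (with $P \in \Pj R$) induces termwise the conflation $0 \to (1_X) \to (1_P) \to (1_{\Sigma X}) \to 0$ in the Frobenius category $\Morph^{\mrm{e}}(\LCM R)$, and $(1_P)$ is projective-injective there.

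Next, I would identify the composite $\underline{F}\circ\underline{H}$. Unwinding the construction of $F$ in Section~\ref{trieqrecoll} applied to $\alpha = 1_X$: choosing $F_{X_\alpha}=F_{T_\alpha}=F_X$ to be a totally acyclic complex in $\cat{C}(\Pj R)$ with $Z^0(F_X)\cong X$ (such a complex exists by Proposition~\ref{lcm}(1)), the formula $d^0_{F_\alpha}=\epsilon_{T_\alpha}\,\alpha\,\rho_{X_\alpha}$ specialises to $\epsilon_X \circ 1_X \circ \rho_X$, which is precisely the zeroth differential of $F_X$. Therefore $F(H(X))=F_X$, and $\underline{F}\circ\underline{H}$ is canonically isomorphic to a quasi-inverse of the equivalence $Z^0:\cat{K}^{\infty,\emptyset}(\Pj R) \iso \underline{\LCM} R$ of Proposition~\ref{lcm}(2). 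In particular $\underline{F}\circ\underline{H}$ is a triangle equivalence.

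Since $\underline{H}$ and $\underline{F}\circ\underline{H}$ are both triangle equivalences, the two-out-of-three property for equivalences of categories forces $\underline{F}|_{\underline{\Morph}^{11}(\LCM R)}$ itself to be a triangle equivalence, which is the claim. The only nontrivial ingredient is the existence of the totally acyclic resolution used in the second paragraph, and that is supplied by Lemma~\ref{st-h41} together with Proposition~\ref{lcm}; everything else is formal transport of the finitely generated case of Lemma~\ref{st-eq2bis}. I anticipate the most delicate point to be checking that $\underline{H}$ is indeed a \emph{triangle} functor (rather than just an additive equivalence), but this reduces, as indicated above, to the observation that the identity $1:\Sigma X \to \Sigma X$ represents the suspension of $1_X$ inside $\underline{\Morph}^{11}(\LCM R)$ under any distinguished triangle built from a conflation with projective middle term.
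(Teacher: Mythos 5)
Your proposal is correct and follows essentially the same route as the paper: the paper's proof of Lemma~\ref{Bst-eq2bis} simply defers to that of Lemma~\ref{st-eq2bis}, which introduces exactly your functor $\underline{H}(X)=(X\xarr{1_X}X)$, notes that it is a triangle equivalence, and deduces the claim from the fact that the composite $\underline{F}\circ\underline{H}$ is an equivalence (identified, as you do, via Proposition~\ref{lcm}). The only caveat is a harmless indexing point: with the paper's normalisation $\mrm{H}^0(\tau_{\le 0}F_{X_\alpha})=X_\alpha$ one gets $Z^1(F_{H(X)})\cong X$ rather than $Z^0$, so the composite is a quasi-inverse of $Z^0$ only up to a shift, which does not affect the conclusion.
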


\begin{proof}
By Lemma \ref{lcm}, this proof is similar to one of Lemma \ref{st-eq2bis}.
\end{proof}

\noindent {\it Proof of Theorem \ref{Bmth}.}~
By Theorem \ref{Bst-st2}, we have only to show that $\underline{F}$ is an triangle equivalence. 
It is easy to see that $\underline{F}$ sends a triangle
\[(\underline{\Morph}^{01}(\LCM R),\underline{\Morph}^{10}(\LCM R),
\underline{\Morph}^{11}(\LCM R))\]
of recollements to a triangle 
\[((\cat{K}^{+,\mrm{b}}/\cat{K}^{\mrm{b}})(\Pj {R}), 
(\cat{K}^{-,\mrm{b}}/\cat{K}^{\mrm{b}})(\Pj {R}), 
\cat{K}^{\infty ,\emptyset}(\Pj {R}))\]
of recollements.
The equivalence $\underline{F}\vert_{\underline{\Morph}^{11}(\LCM R)}:\underline{\Morph}^{11}(\LCM R) \to 
\cat{K}^{\infty,\emptyset}(\Pj {R})$ in Lemma \ref{Bst-eq2bis} yields the desired equivalence 
$\underline{\Morph}^{\mrm{e}}(\LCM R)\simeq 
({\sf K}^{\infty,\mrm{b}}/ {\sf K}^{\mrm{b}})(\Pj {R})$
by Propositions \ref{p20Apr30}. 
\qed

\begin{cor} \label{Bmcor}
Let $R$ be an Iwanaga-Gorenstein ring,
If $R$ has a finite two-sided injective resolution, then
there is a triangle equivalence from $\underline{\LCM}\opn{T}_2(R)$
to $(\cat{K}^{\infty,\mrm{b}}/\cat{K}^{\mrm{b}})(\Ij {R})$.
\end{cor}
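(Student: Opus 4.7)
The plan is to deduce Corollary \ref{Bmcor} from Theorem \ref{Bmth} by transporting the equivalence across a triangle equivalence between the two Verdier quotients $(\cat{K}^{\infty,\mrm{b}}/\cat{K}^{\mrm{b}})(\Pj R)$ and $(\cat{K}^{\infty,\mrm{b}}/\cat{K}^{\mrm{b}})(\Ij R)$, supplied by the Iyama--Krause functor already used in the proof of Lemma \ref{st-h41}. Since Theorem \ref{Bmth} gives a triangle equivalence $\underline{\LCM}\opn{T}_2(R)\simeq(\cat{K}^{\infty,\mrm{b}}/\cat{K}^{\mrm{b}})(\Pj R)$, it suffices to produce a triangle equivalence $(\cat{K}^{\infty,\mrm{b}}/\cat{K}^{\mrm{b}})(\Pj R)\simeq(\cat{K}^{\infty,\mrm{b}}/\cat{K}^{\mrm{b}})(\Ij R)$ and compose.

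For this, I would use the functor $G=-\ten_R V:\cat{K}(\Pj R)\to\cat{K}(\Ij R)$, where $V$ is the (bounded) two-sided injective resolution of $R$. As recalled in the proof of Lemma \ref{st-h41}, $G$ is a triangle equivalence by \cite{IK}, and it restricts to triangle equivalences
\[\cat{K}^{\infty,\mrm{b}}(\Pj R)\simeq\cat{K}^{\infty,\mrm{b}}(\Ij R),\ \ \cat{K}^{+,\mrm{b}}(\Pj R)\simeq\cat{K}^{+,\mrm{b}}(\Ij R),\ \ \cat{K}^{-,\mrm{b}}(\Pj R)\simeq\cat{K}^{-,\mrm{b}}(\Ij R).\]
Because $V$ is bounded and because tensoring a projective module with an injective bimodule yields an injective module over the Noetherian ring $R$ (direct sums of injectives being injective), $G$ sends $\cat{K}^{\mrm{b}}(\Pj R)$ into $\cat{K}^{\mrm{b}}(\Ij R)$. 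Combined with $\cat{K}^{\mrm{b}}=\cat{K}^{+,\mrm{b}}\cap\cat{K}^{-,\mrm{b}}$ and the two $\pm,\mrm{b}$ equivalences above, this shows that $G$ restricts to a triangle equivalence $\cat{K}^{\mrm{b}}(\Pj R)\simeq\cat{K}^{\mrm{b}}(\Ij R)$ between the relevant thick subcategories.

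Finally, from a triangle equivalence $\cat{K}^{\infty,\mrm{b}}(\Pj R)\simeq\cat{K}^{\infty,\mrm{b}}(\Ij R)$ that restricts to an equivalence between the thick subcategories $\cat{K}^{\mrm{b}}(\Pj R)$ and $\cat{K}^{\mrm{b}}(\Ij R)$, the universal property of Verdier quotients gives an induced triangle equivalence $(\cat{K}^{\infty,\mrm{b}}/\cat{K}^{\mrm{b}})(\Pj R)\simeq(\cat{K}^{\infty,\mrm{b}}/\cat{K}^{\mrm{b}})(\Ij R)$. Composing with Theorem \ref{Bmth} yields the desired equivalence. The only technical point to check is the restriction of $G$ to bounded complexes; but this is essentially already packaged in the results of \cite{IK} invoked earlier, so the entire argument is short.
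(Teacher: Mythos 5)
Your proposal is correct and follows essentially the same route as the paper: the paper's proof simply invokes the equivalence $(\cat{K}^{\infty,\mrm{b}}/\cat{K}^{\mrm{b}})(\Pj R)\simeq(\cat{K}^{\infty,\mrm{b}}/\cat{K}^{\mrm{b}})(\Ij R)$ coming from the functor $G=-\ten_R V$ in the proof of Lemma \ref{st-h41} and then composes with Theorem \ref{Bmth}. You supply the one detail the paper leaves implicit, namely that $G$ identifies the thick subcategories $\cat{K}^{\mrm{b}}(\Pj R)$ and $\cat{K}^{\mrm{b}}(\Ij R)$ so that the equivalence descends to the Verdier quotients, which is a worthwhile clarification rather than a deviation.
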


\begin{proof}
We have a triangle equivalence 
$(\cat{K}^{\infty,\mrm{b}}/\cat{K}^{\mrm{b}})(\Pj {R})\simeq
(\cat{K}^{\infty,\mrm{b}}/\cat{K}^{\mrm{b}})(\Ij {R})$ 
by the proof of Lemma \ref{st-h41}. 
So we get the conclusion by Theorem \ref{Bmth}.
\end{proof}

\section{Algebraic triangulated categories and triangles of recollement}\label{agtri}

In this section, we see that results of previous sections also hold in any Frobenius category.

\begin{defn}\label{Fbtac}
Let $\mcal{F}$ be a Frobenius category with the additive subcategory $\mcal{P}$ of 
projective-injective objects.
We denote by the full subcategory $\cat{K}^{\infty, \emptyset}(\mcal{P}, \mcal{F})$
of $\cat{K}(\mcal{P})$ consisting of complexes $X$ such that $Z^i(X)$ exists and
$0 \to \opn{Z}^i(X) \to X^i \to \opn{Z}^{i+1}(X) \to 0$ is an exact sequence of $\mcal{F}$
for any $i \in \mathbb{Z}$.
And we denote by $\underline{\mcal{F}}$ the stable category of $\mcal{F}$ by $\mcal{P}$.
\end{defn}

\begin{rem}
For an exact category $\mcal{E}$, Neeman defined the homotopy category $\cat{A}(\mcal{E})$ of complexes $X$ such that
$0 \to \opn{Z}^i(X) \to X^i \to \opn{Z}^{i+1}(X) \to 0$ is an exact sequence of $\mcal{E}$ (\cite{Ne1}). 
He showed that $\cat{A}(\mcal{E})$ is a full triangulated subcategory of $\cat{K}(\mcal{E})$.
Then the above $\cat{K}^{\infty, \emptyset}(\mcal{P}, \mcal{F})$ is a full triangulated subcategory of $\cat{A}(\mcal{F})$.
\end{rem}

\begin{prop} \label{AgTac}
Let $\mcal{F}$ be a Frobenius category with the additive subcategory $\mcal{P}$ of 
projective-injective objects.
Then $\cat{K}^{\infty, \emptyset}(\mcal{P}, \mcal{F})$ is a triangulated category which is equivalent to
$\underline{\mcal{F}}$.
\end{prop}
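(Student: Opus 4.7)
The plan is to mirror the strategy of Proposition~\ref{cm}(2): construct a functor $Z^0: \cat{K}^{\infty,\emptyset}(\mcal{P},\mcal{F}) \to \underline{\mcal{F}}$ sending each complex to its $0$-th cocycle, and prove it is a triangle equivalence. The triangulated structure on $\cat{K}^{\infty,\emptyset}(\mcal{P},\mcal{F})$ is inherited from $\cat{K}(\mcal{P})$: as noted in the remark preceding the statement, the category sits inside Neeman's $\cat{A}(\mcal{F})$, so it is enough to check closure under the shift (obvious) and under mapping cones. For the latter, the cone of a chain map between complexes in $\cat{K}^{\infty,\emptyset}(\mcal{P},\mcal{F})$ has components in $\mcal{P}$, and an easy diagram chase shows that the associated cocycle sequences remain conflations in $\mcal{F}$.

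Given $X \in \cat{K}^{\infty,\emptyset}(\mcal{P},\mcal{F})$, the cocycle $Z^0(X) = \opn{Ker}(d^0_X)$ lies in $\mcal{F}$ by definition, and for a chain map $f: X \to Y$ the induced map $Z^0(f): Z^0(X) \to Z^0(Y)$ is well-defined in $\underline{\mcal{F}}$: any null-homotopy produces a factorization of $Z^0(f)$ through the projective-injective object $Y^{-1}$. The triangulated structure is preserved because a distinguished triangle coming from a mapping cone in $\cat{K}^{\infty,\emptyset}(\mcal{P},\mcal{F})$ induces a conflation on cocycles, hence a triangle in the triangulated structure of $\underline{\mcal{F}}$.

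For essential surjectivity, I would use the defining property of a Frobenius category: given $M \in \mcal{F}$ one iteratively chooses conflations $0 \to K \to P \to C \to 0$ with $P \in \mcal{P}$ starting from $M$ on both sides, producing a projective resolution $\cdots \to P^{-2} \to P^{-1} \to M \to 0$ and an injective coresolution $0 \to M \to P^0 \to P^1 \to \cdots$ (both by objects of $\mcal{P}$). Splicing them yields $X \in \cat{K}^{\infty,\emptyset}(\mcal{P},\mcal{F})$ with $Z^0(X) \cong M$.

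The main obstacle is full faithfulness. Any morphism $u: Z^0(X) \to Z^0(Y)$ in $\mcal{F}$ lifts to a chain map $X \to Y$ by an inductive lifting argument using projectivity of $X^i$ for $i \leq 0$ and injectivity of $Y^i$ for $i \geq 1$; two such lifts differ by a chain homotopy constructed analogously. The delicate point is to show that if $u$ factors through some $P \in \mcal{P}$, then any lift $f: X \to Y$ is null-homotopic in $\cat{K}(\mcal{P})$: one must build the homotopy $h^i$ inductively in \emph{both} directions, using projectivity of $X^i$ (for $i \geq 0$) and injectivity of $Y^i$ (for $i \leq -1$) to extend successively, and verify that the choices on the two sides glue compatibly at degree $0$. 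This is essentially the standard uniqueness-up-to-homotopy argument for complete resolutions, transcribed to the Frobenius setting, and it is where the bulk of the technical work will lie.
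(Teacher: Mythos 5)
Your proposal is correct and follows exactly the route the paper takes: the paper's proof simply invokes the remark (Neeman's $\cat{A}(\mcal{F})$) for the triangulated structure and asserts that $Z^0$ is a triangle equivalence ``by the same reason as Proposition~\ref{cm}'', which is precisely the functor you construct. You merely supply the details the paper declares easy (well-definedness of $Z^0$ on homotopy classes, essential surjectivity via splicing resolutions and coresolutions in the Frobenius category, and the two-sided homotopy construction for faithfulness), and these all check out since every component lies in $\mcal{P}$ and is therefore both projective and injective.
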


\begin{proof}
By the above remark, $\cat{K}^{\infty, \emptyset}(\mcal{P}, \mcal{F})$ is a full triangulated subcategory of 
$\cat{K}(\mcal{P})$.
By the same reason as Proposition \ref{cm},
the functor $Z^0:\cat{K}^{\infty, \emptyset}(\mcal{P}, \mcal{F}) \to \underline{\mcal{F}}$
is a triangulated equivalence.
\end{proof}

\begin{defn}\label{ghtp01}
Let $\mcal{F}$ be a Frobenius category with the additive subcategory $\mcal{P}$ of 
projective-injective objects.
We denote by the full subcategory $\cat{K}^{\infty, \mrm{b}}(\mcal{P}, \mcal{F})$ of $\cat{K}(\mcal{P})$ 
consisting of complexes $X$ satisfying $\tau_{\leq m}X=\tau_{\leq m}Y$ 
and $\tau_{\geq n}X=\tau_{\geq n}Y'$ for some integers $m\leq n$ and 
some complexes $Y, Y' \in \cat{K}^{\infty, \emptyset}(\mcal{P}, \mcal{F})$.
Let $\cat{K}^{+,\mrm{b}}(\mcal{P}, \mcal{F})=
\cat{K}^{+}(\mcal{P})\cap\cat{K}^{\infty, \mrm{b}}(\mcal{P}, \mcal{F})$ and
$\cat{K}^{-,\mrm{b}}(\mcal{P}, \mcal{F})=
\cat{K}^{-}(\mcal{P})\cap\cat{K}^{\infty, \mrm{b}}(\mcal{P}, \mcal{F})$.
\end{defn}

\begin{lem}\label{gst-st0}
Let $\mcal{F}$ be a Frobenius category with the additive subcategory $\mcal{P}$ of 
projective-injective objects.
Then the following hold.
\begin{enumerate}
\item  A pair $(\cat{K}^{-, \mrm{b}}(\mcal{P},\mcal{F}), \cat{K}^{\infty, \emptyset}(\mcal{P}, \mcal{F}))$ 
is a stable t-structure in $\cat{K}^{\infty, \mrm{b}}(\mcal{P}, \mcal{F})$.
\item  A pair $(\cat{K}^{\infty, \emptyset}(\mcal{P}, \mcal{F}), \cat{K}^{+, \mrm{b}}(\mcal{P},\mcal{F}))$
is a stable t-structure in $\cat{K}^{\infty, \mrm{b}}(\mcal{P}, \mcal{F})$.
\end{enumerate}
\end{lem}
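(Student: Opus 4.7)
The plan is to prove (2) directly and to deduce (1) by passing to the opposite Frobenius category $(\mcal{F}^{\opn{op}}, \mcal{P})$, which interchanges $\cat{K}^{+, \mrm{b}}$ and $\cat{K}^{-, \mrm{b}}$ while preserving $\cat{K}^{\infty, \emptyset}$ and $\cat{K}^{\infty, \mrm{b}}$. Both subcategories in (2) are manifestly $\Sigma$-stable, so the content is in verifying the Hom-vanishing and producing the decomposition triangle, in parallel to Propositions~\ref{st-st0} and \ref{st-st1f} in the abelian setting.

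For the Hom-vanishing, given $U \in \cat{K}^{\infty, \emptyset}(\mcal{P}, \mcal{F})$, $V \in \cat{K}^{+, \mrm{b}}(\mcal{P}, \mcal{F})$, and a chain map $f: U \to V$, I would imitate the classical proof that a bounded-below complex of injectives is K-injective. Fix $N$ with $V^i = 0$ for $i < N$, set $h^i = 0$ for $i \le N$, and construct $h^{i+1}: U^{i+1} \to V^i$ by upward induction so that $d_V h + h d_U = f$. At each step the obstruction $f^i - d_V^{i-1} h^i$ kills $\opn{Im}(d_U^{i-1}) = Z^i(U)$ by the chain-map relation and the induction, so it descends to a map out of $U^i/Z^i(U) \simeq Z^{i+1}(U)$; this map then extends along the admissible monomorphism $Z^{i+1}(U) \hookrightarrow U^{i+1}$ by injectivity of $V^i \in \mcal{P}$ in $\mcal{F}$.

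For the triangle decomposition, let $X \in \cat{K}^{\infty, \mrm{b}}(\mcal{P}, \mcal{F})$ and fix $m$ together with $Y \in \cat{K}^{\infty, \emptyset}(\mcal{P}, \mcal{F})$ such that $A := \tau_{\le m} X = \tau_{\le m} Y$. Running the same cocycle-and-injectivity argument to extend a partially defined chain map upward produces a morphism $\varphi: Y \to X$ restricting to $\opn{id}_A$. Setting $W := \opn{Cone}(\varphi)$ gives a triangle $Y \to X \to W \to \Sigma Y$ with $Y \in \cat{K}^{\infty, \emptyset}$, and what remains is to show $W \in \cat{K}^{+, \mrm{b}}$. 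Applying the octahedral axiom to the composable pair $A \hookrightarrow Y \xrightarrow{\varphi} X$, whose composite is the canonical inclusion $A \hookrightarrow X$, yields a triangle
\[ \tau_{\ge m+1} Y \longrightarrow \tau_{\ge m+1} X \longrightarrow W \longrightarrow \Sigma \tau_{\ge m+1} Y , \]
exhibiting $W$ as the mapping cone of a morphism between two bounded-below complexes in $\mcal{P}$; such a cone is isomorphic in $\cat{K}(\mcal{P})$ to a bounded-below complex, and membership of $W$ in $\cat{K}^{\infty, \mrm{b}}$ is immediate from the triangle $Y \to X \to W$ and the fact that $\cat{K}^{\infty, \mrm{b}}$ is a triangulated subcategory of $\cat{K}(\mcal{P})$.

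Statement (1) follows by the dual construction: pick $Y' \in \cat{K}^{\infty, \emptyset}$ with $\tau_{\ge n} X = \tau_{\ge n} Y'$, extend $\opn{id}$ on the common high-degree subcomplex downward to a chain map $\psi: X \to Y'$ using projectivity of the components of $X$ in $\mcal{P}$, and apply the octahedron to $X \xrightarrow{\psi} Y' \twoheadrightarrow \tau_{\ge n} Y'$ to identify $\Sigma^{-1} \opn{Cone}(\psi)$ with a bounded-above complex. The principal technical obstacle throughout is the careful bookkeeping of cocycle factorizations and admissibility of the monomorphisms $Z^{i+1}(U) \hookrightarrow U^{i+1}$ (and dually the epimorphisms $Y'^{i-1} \twoheadrightarrow Z^i(Y')$) at each induction step; once one identifies the projective-injective nature of $\mcal{P}$-objects as the engine driving every extension, the rest is a direct transcription of the classical injective/projective resolution arguments.
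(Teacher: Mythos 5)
Your proof is correct in substance and follows essentially the same route as the paper: the Hom-vanishing is the standard null-homotopy induction using that each object of $\mcal{P}$ is injective (resp.\ projective) relative to the conflations $0\to Z^i(U)\to U^i\to Z^{i+1}(U)\to 0$, and the decomposition triangle is obtained by extending the identity on a brutal truncation to a chain map between $X$ and an acyclic complex and taking its cone; the paper carries this out for (1) (extending downward by projectivity) and leaves (2) to duality, whereas you do (2) and dualize. The one slip is in the octahedron step: with the paper's conventions $\tau_{\geq n}X$ is a \emph{subcomplex} of $X$ and $\tau_{\leq m}X$ is a \emph{quotient}, so the composable pairs you write down, namely $A\hookrightarrow Y\xrightarrow{\varphi}X$ with $A=\tau_{\leq m}Y$ and $X\xrightarrow{\psi}Y'\twoheadrightarrow\tau_{\geq n}Y'$, do not exist as chain maps. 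You should instead apply the octahedron to $Y\xrightarrow{\varphi}X\twoheadrightarrow\tau_{\leq m}X$ (whose composite is the canonical projection $Y\twoheadrightarrow\tau_{\leq m}Y$) in case (2), and to $\tau_{\geq n}X\hookrightarrow X\xrightarrow{\psi}Y'$ (whose composite is the canonical inclusion $\tau_{\geq n}Y'\hookrightarrow Y'$) in case (1); these corrected applications yield exactly the triangles you assert, so the argument goes through after this adjustment.
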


\begin{proof}
Since $\Hom_{\mcal{P}}(B,Y)$ is acyclic for any $B \in \mcal{P}$ and $Y\in\cat{K}^{\infty, \emptyset}(\mcal{P}, \mcal{F})$, 
it is easy to see that 
\[
\Hom_{\cat{K}(\mcal{P})}(\cat{K}^{-, \mrm{b}}(\mcal{P}, \mcal{F}),\cat{K}^{\infty, \emptyset}(\mcal{P}, \mcal{F}))=0.
\]
For $X \in \cat{K}^{\infty, \mrm{b}}(\mcal{P}, \mcal{F})$,
there are an integer $n$ and a complex $Y \in  \cat{K}^{\infty, \emptyset}(\mcal{P}, \mcal{F})$ such that 
$\tau_{\geq n}X=\tau_{\geq n}Y$.
Then we have a morphism $f:X \to Y$ such that $f^{i}=1_{X^i}$ for any $i \geq n$
and a triangle in $\cat{K}^{\infty, \mrm{b}}(\mcal{P}, \mcal{F})$:
\[
Z \to X \xarr{f} Y \to \Sigma Z .
\]
It is easy to see that $Z$ is isomorphic to some complex of $\cat{K}^{-, \mrm{b}}(\mcal{P}, \mcal{F})$
in $\cat{K}^{\infty,  \mrm{b}}(\mcal{P}, \mcal{F})$.
\end{proof}

We denote by $(\cat{K}^{*,\mrm{b}}/\cat{K}^{\mrm{b}})(\mcal{P}, \mcal{F})$ 
the quotient category of $\cat{K}^{*,\mrm{b}}(\mcal{P}, \mcal{F})$ by $\cat{K}^{\mrm{b}}(\mcal{P})$.

\begin{thm}\label{gst-h3}
Let $\mcal{F}$ be a Frobenius category with the additive subcategory $\mcal{P}$ of 
projective-injective objects.
Then
$((\cat{K}^{+,\mrm{b}}/\cat{K}^{\mrm{b}})(\mcal{P}, \mcal{F}), 
(\cat{K}^{-,\mrm{b}}/\cat{K}^{\mrm{b}})(\mcal{P}, \mcal{F}),\\
\cat{K}^{\infty, \emptyset}(\mcal{P}, \mcal{F}))$
is a triangle of recollements
in $(\cat{K}^{\infty,\mrm{b}}/\cat{K}^{\mrm{b}})(\mcal{P}, \mcal{F})$.
In particular, we have triangle equivalences
\[
(\cat{K}^{+,\mrm{b}}/\cat{K}^{\mrm{b}})(\mcal{P}, \mcal{F})\simeq 
(\cat{K}^{-,\mrm{b}}/\cat{K}^{\mrm{b}})(\mcal{P}, \mcal{F})\simeq
 \cat{K}^{\infty, \emptyset}(\mcal{P}, \mcal{F})
\]
\end{thm}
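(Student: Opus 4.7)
The plan is to mimic the proof of Theorem \ref{st-h3}: I will exhibit three stable $t$-structures in $(\cat{K}^{\infty,\mrm{b}}/\cat{K}^{\mrm{b}})(\mcal{P},\mcal{F})$ whose pairs are those required by the definition of a triangle of recollements for
\[
\mcal{U}_1 = (\cat{K}^{+,\mrm{b}}/\cat{K}^{\mrm{b}})(\mcal{P},\mcal{F}),\quad
\mcal{U}_2 = (\cat{K}^{-,\mrm{b}}/\cat{K}^{\mrm{b}})(\mcal{P},\mcal{F}),\quad
\mcal{U}_3 = \cat{K}^{\infty,\emptyset}(\mcal{P},\mcal{F}).
\]
Once these three stable $t$-structures are in place, the ``In particular'' statement on triangle equivalences is a direct consequence of Proposition \ref{added}.

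Two of the three needed stable $t$-structures descend from Lemma \ref{gst-st0}. Since $\cat{K}^{\mrm{b}}(\mcal{P})$ is a triangulated subcategory contained in both $\cat{K}^{-,\mrm{b}}(\mcal{P},\mcal{F})$ and $\cat{K}^{+,\mrm{b}}(\mcal{P},\mcal{F})$, the ``In particular'' clause of Proposition \ref{st-stK} applied with $\mcal{C}=\cat{K}^{\mrm{b}}(\mcal{P})$ descends the pair in Lemma \ref{gst-st0}(1) to the stable $t$-structure $(\mcal{U}_2,\mcal{U}_3)$ in the quotient, and dually descends the pair in Lemma \ref{gst-st0}(2) to $(\mcal{U}_3,\mcal{U}_1)$.

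The missing stable $t$-structure is the Frobenius analog of Proposition \ref{pm}, namely $(\mcal{U}_1,\mcal{U}_2)$. For the triangle condition, given $X\in\cat{K}^{\infty,\mrm{b}}(\mcal{P},\mcal{F})$ the degreewise split short exact sequence
\[
0 \to \tau_{\geq 1}X \to X \to \tau_{\leq 0}X \to 0
\]
of complexes in $\mcal{P}$ yields a distinguished triangle in $\cat{K}(\mcal{P})$; I will check that $\tau_{\geq 1}X\in\cat{K}^{+,\mrm{b}}(\mcal{P},\mcal{F})$ and $\tau_{\leq 0}X\in\cat{K}^{-,\mrm{b}}(\mcal{P},\mcal{F})$ by re-using the witnesses $\tau_{\leq m}X=\tau_{\leq m}Y$, $\tau_{\geq n}X=\tau_{\geq n}Y'$ supplied by $X$, combined with the trivial observation that the zero complex lies in $\cat{K}^{\infty,\emptyset}(\mcal{P},\mcal{F})$.

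The main obstacle I expect is the Hom-vanishing
\[\Hom_{(\cat{K}^{\infty,\mrm{b}}/\cat{K}^{\mrm{b}})(\mcal{P},\mcal{F})}(\mcal{U}_1,\mcal{U}_2)=0.\]
In the abelian case of Proposition \ref{pm} this was dispatched as ``clear'' thanks to good truncations, but here only brutal truncations are available in $\mcal{P}$. My plan is: for $X\in\cat{K}^{+,\mrm{b}}(\mcal{P},\mcal{F})$ with witness index $N$ past the bounded-below threshold, the complex $\tau_{\leq N-1}X$ is bounded in $\mcal{P}$, so $X\simeq\tau_{\geq N}X$ in the Verdier quotient; dually $Y\in\cat{K}^{-,\mrm{b}}(\mcal{P},\mcal{F})$ satisfies $Y\simeq\tau_{\leq M}Y$ for suitable $M$. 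Choosing $N>M$ produces representatives with disjoint supports, and a routine roof-level argument then shows that every quotient morphism between them factors through a bounded complex in $\mcal{P}$, hence vanishes. With this in hand, the collected triple $(\mcal{U}_1,\mcal{U}_2)$, $(\mcal{U}_2,\mcal{U}_3)$, $(\mcal{U}_3,\mcal{U}_1)$ is exactly a triangle of recollements, and the three equivalences of the ``In particular'' clause follow from Proposition \ref{added}.
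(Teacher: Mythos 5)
Your proposal is correct and follows essentially the same route as the paper: the pairs $(\mcal{U}_2,\mcal{U}_3)$ and $(\mcal{U}_3,\mcal{U}_1)$ are obtained by descending the two stable $t$-structures of Lemma \ref{gst-st0} through Proposition \ref{st-stK}, and $(\mcal{U}_1,\mcal{U}_2)$ is the Frobenius analogue of Proposition \ref{pm} via brutal truncations, with the equivalences then following from Proposition \ref{added}. The only difference is that you spell out the Hom-vanishing and truncation checks that the paper dismisses as ``easy to see,'' and your sketches of those checks are sound.
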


\begin{proof}
As well as Proposition \ref{pm}, it is easy to see that
$((\cat{K}^{+,\mrm{b}}/\cat{K}^{\mrm{b}})(\mcal{P}, \mcal{F}), \\
(\cat{K}^{-,\mrm{b}}/\cat{K}^{\mrm{b}})(\mcal{P}, \mcal{F}))$ is a stable
t-structure.
According to Lemma \ref{gst-st0} and Proposition \ref{st-stK},
$((\cat{K}^{-,\mrm{b}}/\cat{K}^{\mrm{b}})(\mcal{P}, \mcal{F}),  \cat{K}^{\infty, \emptyset}(\mcal{P}, \mcal{F}))$
and
$(\cat{K}^{\infty, \emptyset}(\mcal{P}, \mcal{F}), (\cat{K}^{+,\mrm{b}}/\cat{K}^{\mrm{b}})(\mcal{P}, \mcal{F}))$
are stable t-structures.
\end{proof}

\begin{defn}
Let $\mcal{F}$ be a Frobenius category with the subcategory $\mcal{P}$ of \\
projective-injective objects.
We define the category $\Morph^{\mrm{e}}(\mcal{F})$ 
(resp., $\Morph^{\mrm{m}}(\mcal{F})$) as follows.
\begin{itemize}
\item An object is an admissible epimorphism (resp., an admissible monomorphism) 
$\alpha: X \to Y$.
\item A morphism from $\alpha: X \to Y$ to $\beta: X' \to Y'$ is a pair $(f,g)$ of morphisms 
$f:X \to X'$ and $g:T \to T'$ such that $g\alpha = \beta f$.
\end{itemize}
We denote by $\Pj(\Morph^{\mrm{e}}(\mcal{F}))$ (resp., $\Pj(\Morph^{\mrm{m}}(\mcal{F}))$)
the full subcategory of $\Morph^{\mrm{e}}(\mcal{F})$ (resp., $\Morph^{\mrm{m}}(\mcal{F})$)
consisting objects $X \to Y$ with $X, Y \in \mcal{P}$, and denote by
$\underline{\Morph}^{\mrm{e}}(\mcal{F})$ (resp., $\underline{\Morph}^{\mrm{m}}(\mcal{F})$)
the stable category of $\underline{\Morph}^{\mrm{e}}(\mcal{F})$
(resp., $\underline{\Morph}^{\mrm{m}}(\mcal{F})$) by 
$\Morph^{\mrm{e}}(\mcal{F})$ (resp., $\Morph^{\mrm{m}}(\mcal{F})$).
\end{defn}

\begin{prop}
Let $\mcal{F}$ be a Frobenius category with the subcategory $\mcal{P}$ of \\
projective-injective objects.
Then the following hold.
\begin{enumerate}
\item  $\Morph^{\mrm{e}}(\mcal{F})$ is a Frobenius category, and 
$\underline{\Morph}^{\mrm{e}}(\mcal{F})$ is a triangulated category.
\item  $\Morph^{\mrm{m}}(\mcal{F})$ is a Frobenius category, and 
$\underline{\Morph}^{\mrm{m}}(\mcal{F})$ is a triangulated category.
\item  $\Morph^{\mrm{e}}(\mcal{F})$ is equivalent to  $\Morph^{\mrm{m}}(\mcal{F})$ as a Frobenius category,
and $\underline{\Morph}^{\mrm{e}}(\mcal{F})$ is triangle equivalent to
$\underline{\Morph}^{\mrm{m}}(\mcal{F})$.
\end{enumerate}
\end{prop}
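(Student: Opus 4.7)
The plan is to prove part (3) first, reducing parts (1) and (2) to a single question, and then to establish the Frobenius structure on $\Morph^{\mrm{m}}(\mcal{F})$ directly. This mirrors the strategy of Propositions~\ref{CMT2} and~\ref{BCMT2}, except that for a general Frobenius category $\mcal{F}$ there is no ``$\opn{T}_2(\mcal{F})$-module'' interpretation to borrow, so the arguments must be carried out inside $\mcal{F}$ itself.

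Concretely, I would equip $\Morph^{\mrm{m}}(\mcal{F})$ with the exact structure in which a sequence
\[0 \to (\beta_1: Z_1 \into X_1) \to (\beta_2: Z_2 \into X_2) \to (\beta_3: Z_3 \into X_3) \to 0\]
is a conflation iff both the $Z$-row and the $X$-row are conflations in $\mcal{F}$; the $3\times 3$-lemma in exact categories then forces $0\to Y_1\to Y_2\to Y_3\to 0$ to be a conflation as well, where $Y_i=\opn{cok}\beta_i$. Dually, equip $\Morph^{\mrm{e}}(\mcal{F})$ with conflations defined on the $X$- and $Y$-rows. The kernel functor $\opn{ker}:\Morph^{\mrm{e}}(\mcal{F})\to\Morph^{\mrm{m}}(\mcal{F})$, $\alpha\mapsto(\opn{ker}\alpha\into X_\alpha)$, and the cokernel functor $\opn{cok}:\Morph^{\mrm{m}}(\mcal{F})\to\Morph^{\mrm{e}}(\mcal{F})$, $\beta\mapsto(X_\beta\epto\opn{cok}\beta)$, are then mutually quasi-inverse equivalences of exact categories. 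Verifying Quillen's axioms, functoriality, and that the equivalence is compatible with the conflations reduces to routine diagram chases in $\mcal{F}$.

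Next I would show that $\Morph^{\mrm{m}}(\mcal{F})$ is Frobenius with projective-injective class $\Pj(\Morph^{\mrm{m}}(\mcal{F}))$. Since every $P\in\mcal{P}$ is injective in $\mcal{F}$, any object $\beta:P\into X$ of $\Pj(\Morph^{\mrm{m}}(\mcal{F}))$ splits, so $\beta\simeq(P\into P\oplus Q)$ with $Q=\opn{cok}\beta\in\mcal{P}$. Projectivity of such $\beta$ in $\Morph^{\mrm{m}}(\mcal{F})$ follows by a lifting-with-correction trick: lift the map componentwise using projectivity of $P$ and $Q$, then adjust the middle lift by a summand along $P\into P\oplus Q$ so that the whole square commutes. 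The dual argument shows each object of $\Pj(\Morph^{\mrm{m}}(\mcal{F}))$ is injective. For ``enough projectives'', given $\beta:Z\into X$ with $Y=\opn{cok}\beta$, choose admissible epis $\pi_Z:P\epto Z$ and $\pi_Y:Q\epto Y$ with $P,Q\in\mcal{P}$, lift $\pi_Y$ along the admissible epi $X\epto Y$ to $\widetilde{\pi_Y}:Q\to X$, and form the morphism $(P\into P\oplus Q)\to(Z\into X)$ given componentwise by $\pi_Z$ and $\langle\beta\pi_Z,\widetilde{\pi_Y}\rangle$. A snake-lemma argument in $\mcal{F}$ shows both component maps are admissible epis and produces an admissible mono between the two kernels, so the whole morphism is an admissible epi in $\Morph^{\mrm{m}}(\mcal{F})$. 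Enough injectives follows dually.

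Transporting along the equivalence of (3) then endows $\Morph^{\mrm{e}}(\mcal{F})$ with a Frobenius structure whose projective-injective class is $\opn{cok}(\Pj(\Morph^{\mrm{m}}(\mcal{F})))=\Pj(\Morph^{\mrm{e}}(\mcal{F}))$. By the standard stable-category construction \cite{H1}, $\underline{\Morph}^{\mrm{e}}(\mcal{F})$ and $\underline{\Morph}^{\mrm{m}}(\mcal{F})$ are triangulated, and the equivalence of (3) descends to a triangle equivalence between them, settling all three parts. The main obstacle I anticipate is the snake-lemma step in ``enough projectives'': one must exhibit the kernel of the constructed morphism as a genuine object of $\Morph^{\mrm{m}}(\mcal{F})$, i.e.\ produce an admissible mono between the kernel components, and this is precisely where the $3\times 3$-compatibility built into the exact structure enters essentially.
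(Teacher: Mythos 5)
Your proposal is correct and, in substance, the same argument the paper intends: the paper's proof of this proposition is only the pointer ``as well as Proposition \ref{CMT2}'', and the content of that proof --- identifying the projective-injective objects of the morphism category with the split monomorphisms $P\into P\oplus Q$ with $P,Q\in\mcal{P}$, building (co)resolutions by such objects, and noting that $\opn{ker}$ and $\opn{cok}$ are mutually quasi-inverse --- is exactly what you carry out intrinsically. Your only real addition is to make explicit the componentwise exact structure and the $3\times 3$/snake-lemma verifications that the paper leaves implicit, which is appropriate since the $\opn{T}_2(R)$-module shortcut used in Proposition \ref{CMT2} is unavailable for a general Frobenius category.
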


\begin{proof}
As well as Proposition \ref{CMT2}.
\end{proof}

\begin{thm} \label{gst-st2}
Let $\mcal{F}$ be a Frobenius category with the subcategory $\mcal{P}$ of 
projective-injective objects.
Let $\underline{\Morph}^{10}(\mcal{F})$ (resp., $\underline{\Morph}^{11}(\mcal{F})$, 
$\underline{\Morph}^{01}(\mcal{F})$)
the full subcategory of $\underline{\Morph}^{\mrm{e}}(\mcal{F})$ consisting of objects
corresponding to the objects of the form $X \to 0$ (resp., $T \xarr{1} T$,
$P \to T$ with $P\in \mcal{P}$).
Then 
$(\underline{\Morph}^{10}(\mcal{F}),\underline{\Morph}^{11}(\mcal{F}),
\underline{\Morph}^{01}(\mcal{F}))$ is a triangle of recollements
in $\underline{\Morph}^{\mrm{e}}(\mcal{F})$.
\end{thm}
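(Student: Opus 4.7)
The plan is to mirror the proof of Theorem \ref{st-st2}, since that argument used nothing about $\CM R$ beyond its being a Frobenius category with projective-injective subcategory $\pj R$. By the definition of a triangle of recollements, it suffices to verify three Hom-orthogonality conditions in $\underline{\Morph}^{\mrm{e}}(\mcal{F})$ and to produce, for any object $\alpha:X\to T$ in $\Morph^{\mrm{e}}(\mcal{F})$, three decomposition triangles placing $\alpha$ between the appropriate pairs of subcategories; Proposition \ref{added} then assembles the three resulting stable t-structures into the desired triangle of recollements.

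For the orthogonality, the Hom-space in $\Morph^{\mrm{e}}(\mcal{F})$ from $X\xarr{0}0$ to $T\xarr{1_T}T$ is already zero, since a morphism $(f,g)$ has to satisfy $1_T\circ f = g\circ 0 = 0$. A morphism from $T\xarr{1_T}T$ to $P\xarr{\gamma}T'$ with $P\in\mcal{P}$ necessarily has the form $(f,\gamma f)$, and factors through the projective-injective $P\xarr{1_P}P\in\Pj(\Morph^{\mrm{e}}(\mcal{F}))$ via $(f,f)$ followed by $(1_P,\gamma)$. A morphism $(f,0)$ from $P\xarr{\gamma}T$ (with $P\in\mcal{P}$) to $X'\xarr{0}0$ factors through the split projection $P\oplus P\xarr{\pi_1}P\in\Pj(\Morph^{\mrm{e}}(\mcal{F}))$ as $((0,1_P),0)$ followed by $((0,f),0)$. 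All three Homs therefore vanish in $\underline{\Morph}^{\mrm{e}}(\mcal{F})$.

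For the decomposition triangles, given any admissible epi $\alpha:X\to T$, I reproduce the three constructions of Theorem \ref{st-st2}: (a) the conflation $0\to\ker\alpha\to X\xarr{\alpha}T\to 0$ paired with $0\to 0\to T\xarr{1_T}T\to 0$ gives a conflation $0_{\ker\alpha}\to\alpha\to 1_T$ in $\Morph^{\mrm{e}}(\mcal{F})$ and hence a triangle $0_{\ker\alpha}\to\alpha\to 1_T\to\Sigma 0_{\ker\alpha}$ in the stable category; (b) an admissible mono $X\inj P$ into a projective-injective $P$, pushed out along $\alpha$, produces an admissible epi $\tau:P\to M$ in $\underline{\Morph}^{01}(\mcal{F})$ together with a triangle $1_X\to\alpha\to\tau\to\Sigma 1_X$; (c) an admissible epi $Q\surj X$ with $Q\in\mcal{P}$, composed with $\alpha$, yields $\sigma:Q\to T$ in $\underline{\Morph}^{01}(\mcal{F})$ and a triangle $\sigma\to\alpha\to 0_X\to\Sigma\sigma$, using the identifications $0_{\Sigma^{-1}X}\simeq\Sigma^{-1}0_X$ and $1_{\Sigma X}\simeq\Sigma 1_X$ in $\underline{\Morph}^{\mrm{e}}(\mcal{F})$ exactly as in the $\CM R$ case. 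The main obstacle is checking that $\underline{\Morph}^{10}(\mcal{F})$, $\underline{\Morph}^{11}(\mcal{F})$, and $\underline{\Morph}^{01}(\mcal{F})$ are genuine triangulated subcategories of $\underline{\Morph}^{\mrm{e}}(\mcal{F})$ — i.e.\ closed under shifts and cones — which is an exercise in unwinding the Frobenius structure on $\Morph^{\mrm{e}}(\mcal{F})$ established in the preceding proposition, and is formally identical to the verification already carried out for $\CM R$.
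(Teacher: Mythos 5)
Your proposal is correct and coincides with the paper's own argument: the published proof of Theorem \ref{gst-st2} consists of the single line ``Similar to the proof of Theorem \ref{st-st2},'' and what you have written is precisely that proof transported to the Frobenius setting, with the three Hom-vanishing checks (via factorizations through objects of $\Pj(\Morph^{\mrm{e}}(\mcal{F}))$) and the three decomposition triangles built from the conflations $0\to\ker\alpha\to X\to T\to 0$, $X\rightarrowtail P$ and $Q\twoheadrightarrow X$ matching the paper's constructions for $\CM R$. The only point you defer --- closure of the three subcategories under $\Sigma^{\pm 1}$ --- is handled in the paper at the same level of brevity (the isomorphisms $0_{\Sigma^{-1}X}\simeq\Sigma^{-1}0_X$ and $1_{\Sigma X}\simeq\Sigma 1_X$), so no genuine gap remains.
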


\begin{proof}
Similar to the proof of Theorem \ref{st-st2}.
\end{proof}

\begin{thm}\label{gmth}
Let $\mcal{F}$ be a Frobenius category with the additive subcategory $\mcal{P}$ of 
projective-injective objects.
Then $\underline{\Morph}^{\mrm{e}}(\mcal{F})$ is triangle equivalent to \\
$(\cat{K}^{\infty,\mrm{b}}/\cat{K}^{\mrm{b}})(\mcal{P}, \mcal{F})$.
\end{thm}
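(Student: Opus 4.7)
The plan is to mimic the proof of Theorem \ref{mth} verbatim, using the Frobenius structure of $\mcal{F}$ in place of the Iwanaga-Gorenstein hypothesis. Everything needed is already in place: Theorem \ref{gst-h3} provides a triangle of recollements in the target $(\cat{K}^{\infty,\mrm{b}}/\cat{K}^{\mrm{b}})(\mcal{P},\mcal{F})$, Theorem \ref{gst-st2} provides a triangle of recollements in the source $\underline{\Morph}^{\mrm{e}}(\mcal{F})$, and Proposition \ref{AgTac} identifies $\cat{K}^{\infty,\emptyset}(\mcal{P},\mcal{F})$ with $\underline{\mcal{F}}$ via $Z^0$. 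Proposition \ref{p20Apr30} then reduces the full theorem to exhibiting a triangle functor between the two triangles of recollements that is an equivalence on a single corner.

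First I would build a functor $F : \Morph^{\mrm{e}}(\mcal{F}) \to \cat{K}^{\infty,\mrm{b}}(\mcal{P},\mcal{F})$ exactly as in Section \ref{trieqrecoll}. For $X \in \mcal{F}$ the Frobenius structure (via the quasi-inverse of Proposition \ref{AgTac}) supplies a complete resolution $F_X \in \cat{K}^{\infty,\emptyset}(\mcal{P},\mcal{F})$ with $Z^0(F_X) = X$. For $\alpha : X_\alpha \to T_\alpha$ in $\Morph^{\mrm{e}}(\mcal{F})$, set $\tau_{\leq 0}F_\alpha := \tau_{\leq 0}F_{X_\alpha}$, $\tau_{\geq 1}F_\alpha := \tau_{\geq 1}F_{T_\alpha}$, with connecting differential $\epsilon_{T_\alpha}\alpha\rho_{X_\alpha}$; by construction $F_\alpha$ lies in $\cat{K}^{\infty,\mrm{b}}(\mcal{P},\mcal{F})$ in the sense of Definition \ref{ghtp01}. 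Functoriality, fullness, and the fact that $F$ descends to a triangle functor
\[
\underline{F} : \underline{\Morph}^{\mrm{e}}(\mcal{F}) \to (\cat{K}^{\infty,\mrm{b}}/\cat{K}^{\mrm{b}})(\mcal{P},\mcal{F})
\]
are exactly Lemmas \ref{K8Jul55}, \ref{F}, \ref{Fbar} and Proposition \ref{K665}; their proofs rest only on the exactness of $\Hom(P,-)$ and $\Hom(-,Q)$ for $P,Q \in \mcal{P}$ and on the existence of enough projective-injectives, both of which are built into the Frobenius hypothesis.

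Next I would check that $\underline{F}$ sends the triangle of recollements $(\underline{\Morph}^{10}(\mcal{F}),\underline{\Morph}^{11}(\mcal{F}),\underline{\Morph}^{01}(\mcal{F}))$ of Theorem \ref{gst-st2} to the cyclic rotation $((\cat{K}^{-,\mrm{b}}/\cat{K}^{\mrm{b}})(\mcal{P},\mcal{F}), \cat{K}^{\infty,\emptyset}(\mcal{P},\mcal{F}), (\cat{K}^{+,\mrm{b}}/\cat{K}^{\mrm{b}})(\mcal{P},\mcal{F}))$ of the triangle of Theorem \ref{gst-h3}: on $X \to 0$ the complex $F_\alpha$ collapses to a projective resolution of $X$, hence lies in $\cat{K}^{-,\mrm{b}}$; on $P \to T$ with $P \in \mcal{P}$ it collapses to a projective coresolution of $T$, hence lies in $\cat{K}^{+,\mrm{b}}$; and on $1_X$ it reproduces the complete resolution $F_X \in \cat{K}^{\infty,\emptyset}(\mcal{P},\mcal{F})$. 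For the equivalence on one corner, the functor $\underline{H} : \underline{\mcal{F}} \to \underline{\Morph}^{11}(\mcal{F})$, $X \mapsto 1_X$, is a triangle equivalence by direct inspection, and $\underline{F}|_{\underline{\Morph}^{11}(\mcal{F})} \circ \underline{H}$ agrees with the equivalence $\underline{\mcal{F}} \simeq \cat{K}^{\infty,\emptyset}(\mcal{P},\mcal{F})$ of Proposition \ref{AgTac}; Proposition \ref{p20Apr30} then finishes the proof. The main obstacle is purely bookkeeping: verifying that the Frobenius axioms truly supply every resolution, lifting, and exact sequence that entered Section \ref{trieqrecoll}, and that the refined exactness condition of Definition \ref{ghtp01} is preserved under the truncations and mapping-cone constructions used in the proof of Proposition \ref{K665}. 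No essentially new argument beyond the CM case is required.
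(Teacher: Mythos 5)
Your proposal is correct and follows exactly the paper's own proof: construct $F_\alpha$ as in Section \ref{trieqrecoll}, observe that $\underline{F}$ sends the triangle of recollements of Theorem \ref{gst-st2} to the (cyclically rotated) one of Theorem \ref{gst-h3}, verify the equivalence on the corner $\underline{\Morph}^{11}(\mcal{F})\simeq\cat{K}^{\infty,\emptyset}(\mcal{P},\mcal{F})$ as in Lemma \ref{st-eq2bis}, and conclude by Proposition \ref{p20Apr30}. If anything, your write-up is more careful than the paper's, which leaves the verification that the construction carries over to the Frobenius setting (in particular that $F_\alpha$ lands in $\cat{K}^{\infty,\mrm{b}}(\mcal{P},\mcal{F})$ in the sense of Definition \ref{ghtp01}) entirely implicit.
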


\begin{proof}
For $\alpha:X \to Y \in \Morph^{\mrm{e}}(\mcal{F})$ 
we can construct a complex $F_{\alpha}$ as the same construction in Section \ref{trieqrecoll}, and then
a functor $\underline{F}:\underline{\Morph}^{\mrm{e}}(\mcal{F}) \to 
(\cat{K}^{\infty,\mrm{b}}/\cat{K}^{\mrm{b}})(\mcal{P}, \mcal{F})$.
Moreover, it is easy to see that $\underline{F}$ sends a triangle
$(\underline{\Morph}^{10}(\mcal{F}),\underline{\Morph}^{11}(\mcal{F}),
\underline{\Morph}^{01}(\mcal{F}))$
of recollements to a triangle 
$((\cat{K}^{-,\mrm{b}}/\cat{K}^{\mrm{b}})(\mcal{P}, \mcal{F}),  \cat{K}^{\infty, \emptyset}(\mcal{P}, \mcal{F}),
(\cat{K}^{+,\mrm{b}}/\cat{K}^{\mrm{b}})(\mcal{P}, \mcal{F}))$
of recollements.
As well as Lemma \ref{st-eq2bis}, $F\vert_{\underline{\Morph}^{11}(\mcal{F})}:
\underline{\Morph}^{11}(\mcal{F}) \to  \cat{K}^{\infty, \emptyset}(\mcal{P}, \mcal{F})$ is a triangle equivalence.
By Proposition \ref{p20Apr30}, we have the conclusion.
\end{proof}

\begin{exmp}\label{TotAc}
Let $\mcal{A}$ be an abelian category, and let $\mcal{P}$ be its additive full subcategory satisfying 
that $\Ext^{1}_{\mcal{A}}(X,Y)=0$ for any $X, Y \in \mcal{P}$.
A complex $X\in \cat{K}(\mcal{P})$ is called \emph{totally acyclic}
if $\Hom_{\mcal{B}}(B,X)$ and $\Hom_{\mcal{B}}(X,B)$ are both acyclic for any $B \in \mcal{P}$.
Let $\mcal{G}(\mcal{P})$ be the full subcategory of $\mcal{A}$ consisting objects which are isomorphic to  the
$0$-cycle $\opn{Z}^0(X)$ of some totally acyclic complex $X \in \cat{K}(\mcal{P})$.
Then $\mcal{G}(\mcal{P})$ is a Frobenius category with the additive subcategory $\mcal{P}$ of 
projective-injective objects.
Therefore $(\cat{K}^{\infty,\mrm{b}}/\cat{K}^{\mrm{b}})(\mcal{P}, \mcal{G}(\mcal{P}))$ has a
triangle of recollements.
In particular, for a commutative noetherian ring $R$ with a dualizing complex, 
$\cat{K}^{\infty,\mrm{b}}(\Pj R, \mcal{G}(\Pj R))$ is called the symmetric Auslander category, 
a triangle of recollements in $(\cat{K}^{\infty,\mrm{b}}/\cat{K}^{\mrm{b}})(\Pj R, \mcal{G}(\Pj R))$
are studied in \cite{JK}.
\end{exmp}

\begin{exmp}\label{ATFR4} 
Let $k$ be a field, and let $A$ be the $k$-algebra defined by the following quiver with relations:
\[
\xymatrix{
& 3 \ar[dr]^{\beta}  \\
1 \ar[ur]^{\gamma} & & 2 \ar[ll]^{\alpha}
}
\]
with $\alpha\beta\gamma=\beta\gamma\alpha = \gamma\alpha\beta=0$.
Let $\mcal{F}$ be the additive subcategory of $\fmod A$ generated by finitely generated projective
right $A$-modules, $e_1A/e_1J_A$ and $e_2A/\cat{Soc}(e_2A)$, where $J_A$ is the Jacobson radical right $A$-modules, where $J_A$ is the Jacobson radical and
$e_i$ is the idempotent corresponding a vertex $i$.
Then $\mcal{F}$ is a Frobenius category with the subcategory $\pj A$ of finitely generated 
projective-injective $A$-modules.
Then we have the following equivalences
\[
(\cat{K}^{+,\mrm{b}}/\cat{K}^{\mrm{b}})(\pj A, \mcal{F})\simeq
(\cat{K}^{-,\mrm{b}}/\cat{K}^{\mrm{b}})(\pj A, \mcal{F})\simeq
\cat{K}^{\infty,\emptyset}(\pj A, \mcal{F})\simeq
\underline{\fmod} B, 
\]
\[
(\cat{K}^{\infty,\mrm{b}}/\cat{K}^{\mrm{b}})(\pj A, \mcal{F})\simeq
\underline{\fmod} \opn{T}_2(B).
\]
where $B$ is  the  algebra defined by the following quiver with relations:
\[
\xymatrix{
1 \ar@/^1pc/[r]^{\delta} & 2 \ar@/^1pc/[l]^{\alpha}
}
\]
with $\alpha\delta=\delta\alpha=0$.
On the other hand, we have 
\[\begin{aligned}
(\cat{K}^{+,\mrm{b}}/\cat{K}^{\mrm{b}})(\pj A, \fmod A) &\simeq
(\cat{K}^{-,\mrm{b}}/\cat{K}^{\mrm{b}})(\pj A, \fmod A)\simeq
\cat{K}^{\infty,\emptyset}(\pj A, \fmod A) \\
& \simeq \underline{\fmod} A, 
\end{aligned}\]
\[
(\cat{K}^{\infty,\mrm{b}}/\cat{K}^{\mrm{b}})(\pj A, \fmod A)\simeq
\underline{\fmod} \opn{T}_2(A).
\]
Thus the category $\cat{K}^{\infty, \mrm{b}}(\mcal{P}, \mcal{F})$ depends not only on 
$\mcal{P}$ but also on $\mcal{F}$. 

\end{exmp}



\begin{thebibliography}{SGA6}
\bibitem[A1]{A1} M. Auslander,
        Representation dimension of artin algebras, Queen Mary College Mathematics Notes,
        1971.
\bibitem[A2]{A2} M. Auslander,
        Functors and morphisms determined by objects,
        Representation theory of algebras (Proc. Conf., Temple Univ., 
        Philadelphia, Pa., 1976), pp. 1--244, 
        Lecture Notes in Pure Appl. Math., Vol. \textbf{37}, Dekker, New York, 1978.
\bibitem[ABr]{ABr} M. Auslander, M. Bridger,
        Stable module theory,
        Memoirs of the American Mathematical Society, No. \textbf{94},
        American Mathematical Society, Providence, R.I. 1969.
\bibitem[ABu]{ABu} M. Auslander, R.-O. Buchweitz,
        The homological theory of maximal Cohen-Macaulay approximations,
        Colloque en l'honneur de Pierre Samuel 
        (Orsay, 1987). Mem. Soc. Math. France (N.S.) No. \textbf{38} (1989), 5--37.
\bibitem[AR1]{AR1} M. Auslander, I. Reiten,
        Applications of contravariantly finite subcategories,
        Adv. Math. \textbf{86} (1991), no. 1, 111--152.
\bibitem[AR2]{AR2} M. Auslander, I. Reiten,
        Cohen-Macaulay and Gorenstein Artin algebras,
        Representation theory of finite groups and finite-dimensional algebras 
        (Bielefeld, 1991), 221--245, Progr. Math., \textbf{95}, Birkhauser, Basel, 1991. 
\bibitem[BBD]{BBD} A.\ A.\ Beilinson, J.\ Bernstein and P.\ Deligne, 
        Faisceaux Pervers, Ast\'{e}risque \textbf{100} (1982). 
\bibitem[Bu]{Bu} R.\-O.\ Buchweitz, 
        Maximal Cohen-Macaulay modules and Tate-cohomology over Gorenstein rings,
        Unpublished manuscript (1987), 155 pp.
\bibitem[C]{C} X. Chen, The Stable Monomorphism Category of a Frobenius category,
        arXiv:0911.1987.
\bibitem[CR]{CR} C. W. Curtis, I. Reiner, 
        Methods of representation theory. Vol. I. With applications to finite groups and orders. 
        Wiley Classics Library. A Wiley-Interscience Publication. John Wiley \& Sons, Inc., New York, 1990.
\bibitem[H1]{H1} D. Happel,
        Triangulated categories in the representation theory of finite-dimensional algebras,
        London Mathematical Society Lecture Note Series, \textbf{119},
        Cambridge University Press, Cambridge, 1988.
\bibitem[H2]{H2} D. Happel,
        On Gorenstein algebras,
        Representation theory of finite groups and finite-dimensional algebras 
        (Bielefeld, 1991), 389--404, Progr. Math., \textbf{95}, Birkhauser, Basel, 1991.
\bibitem[Iw]{Iw} Y.\ Iwanaga, 
        On rings with finite self-injective dimension,
        Comm. Algebra \textbf{7} (1979), no. 4, 393--414. 
\bibitem[I]{I} O. Iyama,
        Auslander-Reiten theory revisited.  Trends in representation theory of
        algebras and related topics,  349--397, EMS Ser. Congr. Rep.,
        Eur. Math. Soc., Z\"urich, 2008.
\bibitem[IKM]{IKM} O.\ Iyama, K.\ Kato, J.\ Miyachi, 
        Polygon of recollements, 
        in preparation.
\bibitem[IK]{IK} S.\ Iyengar, H.\ Krause, 
        Acyclicity versus total acyclicity for complexes over noetherian rings, 
        Documenta Math. \textbf{11} (2006), 207-240.
\bibitem[JK]{JK} P.\ J\o rgensen, K.\ Kato, 
        Symmetric Auslander and Bass categories, 
        in preparation.        
\bibitem[Ke]{Ke1} B. Keller, 
        Chain complexes and stable categories, 
        Manuscripta Math. \textbf{67} (1990), no. 4, 379-417.
\bibitem[Kr]{Kr} H.\ Krause, 
        The stable derived category of a noetherian scheme, 
        Compos. Math., \textbf{141} (2005), 1128-1162.
\bibitem[LZ]{LZ} Z. Li, P. Zhang, Gorenstein algebras of finite Cohen-Macaulay type, preprint.
\bibitem[Mi1]{Mi1} J.\ Miyachi,
        Localization of Triangulated Categories and Derived Categories,
        J.\ Algebra \textbf{141} (1991), 463-483.
\bibitem[Mi2]{Mi4} J.\ Miyachi, Injective Resolutions of Noetherian Rings and Cogenerators,
        Proceedings of The AMS \textbf{128} (2000), no. 8, 2233-2242. 
\bibitem[Ne1]{Ne1} A. Neeman
        The derived category of an exact category, 
        J.\ Algebra \textbf{135} (1990) 388-394.
\bibitem[Ne2]{Ne2} A. Neeman
        Triangulated Categories, 
        Annals of Math. Studies, \textbf{148}, Princeton Univ. Press, 2001.
\bibitem[O1]{O1} D. Orlov,
        Derived categories of coherent sheaves and triangulated categories of singularities, 
        arXiv: math/0503632.
\bibitem[O2]{O2} D. Orlov,
        Triangulated categories of singularities and D-branes in
         Landau-Ginzburg models,
         Proc. Steklov Inst. Math. 2004, no. 3 (246), 227--248.
\bibitem[Rd]{Rd2} J. Rickard, Derived categories and stable equivalence,
        J. Pure Appl. Algebra \textbf{61} (1989), no. 3, 303--317.
\bibitem[Y]{Y} Y. Yoshino,
        Cohen-Macaulay modules over Cohen-Macaulay rings,
        London Mathematical Society Lecture Note Series, \textbf{146}, 
        Cambridge University Press, Cambridge, 1990.
\end{thebibliography}
\end{document}